\documentclass[twoside,leqno,10pt, A4]{amsart}
\usepackage{amsfonts}
\usepackage{amsmath}
\usepackage{amscd}
\usepackage{amssymb}
\usepackage{amsthm}
\usepackage{amsrefs}
\usepackage{latexsym}
\usepackage{mathrsfs}
\usepackage{bbm}
\usepackage{amscd}
\usepackage{amssymb}
\usepackage{amsthm}
\usepackage{amsrefs}
\usepackage{latexsym}
\usepackage{mathrsfs}
\usepackage{bbm}
\usepackage{enumerate}
\usepackage{graphicx}
\allowdisplaybreaks
\usepackage{color}
\begin{document}

\newtheorem{theorem}[subsection]{Theorem}
\newtheorem{proposition}[subsection]{Proposition}
\newtheorem{lemma}[subsection]{Lemma}
\newtheorem{corollary}[subsection]{Corollary}
\newtheorem{conjecture}[subsection]{Conjecture}
\newtheorem{prop}[subsection]{Proposition}
\newtheorem{defin}[subsection]{Definition}

\numberwithin{equation}{section}
\newcommand{\mr}{\ensuremath{\mathbb R}}
\newcommand{\mc}{\ensuremath{\mathbb C}}
\newcommand{\N}{\mathbb{N}}
\newcommand{\dif}{\mathrm{d}}
\newcommand{\intz}{\mathbb{Z}}
\newcommand{\ratq}{\mathbb{Q}}
\newcommand{\natn}{\mathbb{N}}
\newcommand{\comc}{\mathbb{C}}
\newcommand{\rear}{\mathbb{R}}
\newcommand{\prip}{\mathbb{P}}
\newcommand{\uph}{\mathbb{H}}
\newcommand{\fief}{\mathbb{F}}
\newcommand{\majorarc}{\mathfrak{M}}
\newcommand{\minorarc}{\mathfrak{m}}
\newcommand{\sings}{\mathfrak{S}}
\newcommand{\fA}{\ensuremath{\mathfrak A}}
\newcommand{\mn}{\ensuremath{\mathbb N}}
\newcommand{\mq}{\ensuremath{\mathbb Q}}
\newcommand{\half}{\tfrac{1}{2}}
\newcommand{\f}{f\times \chi}
\newcommand{\summ}{\mathop{{\sum}^{\star}}}
\newcommand{\chiq}{\chi \bmod q}
\newcommand{\chidb}{\chi \bmod db}
\newcommand{\chid}{\chi \bmod d}
\newcommand{\sym}{\text{sym}^2}
\newcommand{\hhalf}{\tfrac{1}{2}}
\newcommand{\sumstar}{\sideset{}{^*}\sum}
\newcommand{\sumprime}{\sideset{}{'}\sum}
\newcommand{\sumprimeprime}{\sideset{}{''}\sum}
\newcommand{\sumflat}{\sideset{}{^\flat}\sum}
\newcommand{\shortmod}{\ensuremath{\negthickspace \negthickspace \negthickspace \pmod}}
\newcommand{\V}{V\left(\frac{nm}{q^2}\right)}
\newcommand{\sumi}{\mathop{{\sum}^{\dagger}}}
\newcommand{\mz}{\ensuremath{\mathbb Z}}
\newcommand{\leg}[2]{\left(\frac{#1}{#2}\right)}
\newcommand{\muK}{\mu_{\omega}}
\newcommand{\thalf}{\tfrac12}
\newcommand{\lp}{\left(}
\newcommand{\rp}{\right)}
\newcommand{\Lam}{\Lambda_{[i]}}
\newcommand{\lam}{\lambda}
\newcommand{\af}{\mathfrak{a}}
\newcommand{\sw}{S_{[i]}(X,Y;\Phi,\Psi)}
\newcommand{\lz}{\left(}
\newcommand{\pz}{\right)}
\newcommand{\bfrac}[2]{\lz\frac{#1}{#2}\pz}
\newcommand{\odd}{\mathrm{\ primary}}
\newcommand{\even}{\text{ even}}
\newcommand{\res}{\mathrm{Res}}
\newcommand{\sumn}{\sumstar_{(c,1+i)=1}  w\left( \frac {N(c)}X \right)}
\newcommand{\lab}{\left|}
\newcommand{\rab}{\right|}
\newcommand{\Go}{\Gamma_{o}}
\newcommand{\Ge}{\Gamma_{e}}
\newcommand{\M}{\widehat}
\def\su#1{\sum_{\substack{#1}}}
\newcommand{\Echar}{\mathbb{E}^{\text{char}}}
\newcommand{\E}{\mathbb{E}}
\newcommand{\p}{\mathbb{P}}

\theoremstyle{plain}
\newtheorem{conj}{Conjecture}
\newtheorem{remark}[subsection]{Remark}

\newcommand{\pfrac}[2]{\left(\frac{#1}{#2}\right)}
\newcommand{\pmfrac}[2]{\left(\mfrac{#1}{#2}\right)}
\newcommand{\ptfrac}[2]{\left(\tfrac{#1}{#2}\right)}
\newcommand{\pMatrix}[4]{\left(\begin{matrix}#1 & #2 \\ #3 & #4\end{matrix}\right)}
\newcommand{\ppMatrix}[4]{\left(\!\pMatrix{#1}{#2}{#3}{#4}\!\right)}
\renewcommand{\pmatrix}[4]{\left(\begin{smallmatrix}#1 & #2 \\ #3 & #4\end{smallmatrix}\right)}
\def\en{{\mathbf{\,e}}_n}

\newcommand{\ppmod}[1]{\hspace{-0.15cm}\pmod{#1}}
\newcommand{\ccom}[1]{{\color{red}{Chantal: #1}} }
\newcommand{\acom}[1]{{\color{blue}{Alia: #1}} }
\newcommand{\alexcom}[1]{{\color{green}{Alex: #1}} }
\newcommand{\hcom}[1]{{\color{brown}{Hua: #1}} }

\makeatletter
\def\widebreve{\mathpalette\wide@breve}
\def\wide@breve#1#2{\sbox\z@{$#1#2$}%
     \mathop{\vbox{\m@th\ialign{##\crcr
\kern0.08em\brevefill#1{0.8\wd\z@}\crcr\noalign{\nointerlineskip}%
                    $\hss#1#2\hss$\crcr}}}\limits}
\def\brevefill#1#2{$\m@th\sbox\tw@{$#1($}%
  \hss\resizebox{#2}{\wd\tw@}{\rotatebox[origin=c]{90}{\upshape(}}\hss$}
\makeatletter

\title[Moments of random multiplicative functions with Fourier coefficients of modular forms]{Low moments of random multiplicative functions twisted by Fourier coefficients of modular forms}

%%\date{\today}
\author[P. Gao]{Peng Gao}
\address{School of Mathematical Sciences, Beihang University, Beijing 100191, China}
\email{penggao@buaa.edu.cn}

\author[L. Zhao]{Liangyi Zhao}
\address{School of Mathematics and Statistics, University of New South Wales, Sydney NSW 2052, Australia}
\email{l.zhao@unsw.edu.au}

\begin{abstract}
  Let $\lambda(n)$ denote the Fourier coefficients of a fixed modular form and $h(n)$ a Steinhaus or Rademacher random multiplicative function. In this paper, we determine the order of magnitude of 
  \[ \E|\sum_{n \leq x} h(n)\lambda(n)|^{2q} \]
   for real $x$, $q$ with $0 \leq q \leq 1$.
\end{abstract}

\maketitle

\noindent {\bf Mathematics Subject Classification (2010)}: 11N37, 11L40, 11K65, 60G15, 60G57  \newline

\noindent {\bf Keywords}:  Steinhaus random multiplicative function,  Rademacher random multiplicative function, modular $L$-functions, Fourier coefficients, low moments

\section{Introduction}\label{sec1}

The sums of arithemtic functions, such as character sums, are very important in number theory and hence have been studied extensively.  One may often model the behaviors of these sums using random multiplicative functions. For example, one may simulate a Dirichlet character (resp. the M\"obius function) by a Steinhaus (resp. Rademacher) random multiplicative function. Here we recall (see \cite{Harper20}) that for all $n \in \N$, a Steinhaus random multiplicative function $h(n)$ is defined to be $h(n) = \prod_{p^{a} || n} h(p)^{a}$, where we reserve the letter $p$ for a prime number throughout the paper, and $(h(p))_{p \; \text{prime}}$ is a sequence of independent random variables, each distributed uniformly on the complex unit circle.  A Rademacher random multiplicative function $h(n)$ is defined to be supported on square-free integers $n$ only such that $h(n) := \prod_{p |n} h(p)$, where $(h(p))_{p \; \text{prime}}$ is a sequence of independent random variables taking values $\pm 1$ with probability $1/2$ each. \newline

 When estimating sums of a sequence of oscillating arithmetic functions,  the well-known square-root cancellation heuristic suggests that they should typically be of size about the square-root, but not smaller, of the trivial bounds obtained by simply using the triangle inequality.  However, A. J. Harper (see \cites{Harper20, Harper23}) showed that there are sums, of both random multiplicative functions and Dirichlet characters, for which that the above heuristic does not apply.  More precisely, he determined the order of magnitude of the low moments of sums of Steinhaus and Rademacher random multiplicative functions in \cite{Harper20}, proving that for a Steinhaus random multiplicative function or a Rademacher random multiplicative function $h(n)$, one has uniformly for all large real $x$ and $0 \leq q \leq 1$,
\begin{align}
\label{randlowmoment}
 \E|\sum_{n \leq x} h(n)|^{2q} \asymp \left( \frac{x}{1 + (1-q)\sqrt{\log\log x}} \right)^q,
\end{align}
 where $\mathbb{E}$ denotes the expectation throughout the paper. \newline

  Further, Harper applied the ideas in \cite{Harper20} to establish in \cite[Theorem 3]{Harper23} that for large primes $Q$, uniformly for any $x \in [1,Q]$, $q \in [0,1]$, and multiplicative function $u(n)$ with modulus $1$ at primes and not exceeding $1$ at prime powers,
\begin{align}
\label{genlowmoment}
 \frac{1}{Q-1} \sum_{\chi \shortmod Q} \Big| \sum_{n \leq x}  \chi(n)u(n)\Big| ^{2q} \ll \Biggl(\frac{x}{1 + (1-q)\sqrt{\log\log(10L)}} \Biggr)^q, \quad \mbox{where} \; L=L_Q=  \min\{x,Q/x\}
\end{align}
and $\chi(n)$ runs over Dirichlet characters modulo $Q$. \newline

   Note that the special case of \eqref{randlowmoment} with $q=1/2$ implies that
\begin{equation}
\label{squarerootrand}
     \E|\sum_{n \leq x} h(n)| =o(\sqrt{x}).
\end{equation}
   Similarly, the special case of \eqref{genlowmoment} with $q=1/2, u(n)\equiv 1$ in \eqref{genlowmoment} implies that when both $x$ and $Q/x$ tend to infinity with $Q$,
\begin{equation}
\label{squareroot}
     \frac{1}{Q-1} \sum_{\chi \shortmod Q}  \Big| \sum_{n\leq x} \chi(n) \Big| =o(\sqrt{x}).
\end{equation}
Hence, \eqref{squarerootrand} and \eqref{squareroot} imply that the summands in $\sum_{n\leq x} h(n)$, $\sum_{n\leq x} \chi(n)$ typically neutralize one another more than the square-root cancellation heuristic suggests. \newline

  In light of \eqref{randlowmoment}, it is natural to consider bounds for more general sums $\sum_{n \leq x} h(n)a(n)$, where $\{a(n)\}$ is a complex valued sequence and $h(n)$ a random multiplicative function.  One such example has been studied in \cite{Xu24}. Here we note that, if $a(n)$ is multiplicative with $a(1)=1$, $|a(n)| \leq 1$, then a straightforward modification of the arguments in \cite{Harper20} may allow one to obtain bounds for $\E|\sum_{n \leq x} h(n)a(n)|^{2q}$ with $0 \leq q \leq 1$.  Thus we turn our attention to other types of sequences.  One candidate is to set $a(n)=\lambda(n)$, where $\lambda(n)$ equals the Fourier coefficient of a fixed holomorphic Hecke eigenform $f$ of weight $\kappa \equiv 0 \pmod 4$ for the full modular group $SL_2 (\mathbb{Z})$. Here we recall that the Fourier expansion of $f$ at infinity is given by
\[
f(z) = \sum_{n=1}^{\infty} \lambda (n) n^{(\kappa -1)/2} e(nz), \quad \mbox{where} \quad e(z) = \exp (2 \pi i z).
\]

In fact, low moments of the sum $\sum_{n \leq x} \chi(n)\lambda(n)$ has already been studied in \cite{G&Wu25-12}.  It is shown there that for large primes $Q$, any $1 \leq x \leq Q$ and $0 \leq q \leq 1$, we have, for $L$ defined in \eqref{genlowmoment},
\begin{align*}
%%\label{genlowmoment}
 \frac{1}{Q-1} \sum_{\chi \shortmod Q} \Big| \sum_{n \leq x}  \chi(n)\lambda(n)\Big| ^{2q} \ll \Biggl(\frac{x}{1 + (1-q)\sqrt{\log\log(10L)}} \Biggr)^q.
\end{align*}

The aim of this paper is to determine the order of magnitude of the low moments of sums of the form $\sum_{n \leq x} h(n)\lambda(n)$ for $h$ being  a Steinhaus or a Rademacher random multiplicative function. Our result is as follows.
\begin{theorem}
\label{lowerboundsfixedmodmean}
With the notation as above, suppose $h(n)$ is a Steinhaus random multiplicative function or a Rademacher random multiplicative function.  Then uniformly for all large $x$ and $0 \leq q \leq 1$, we have
\begin{align}
\label{mainestimation}
 \E|\sum_{n \leq x} h(n)\lambda(n)|^{2q} \asymp \left( \frac{x}{1 + (1-q)\sqrt{\log\log x}} \right)^q .
\end{align}
\end{theorem}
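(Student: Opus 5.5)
I will follow Harper's strategy from \cite{Harper20} and adapt it to the twisted coefficients $\lambda(n)$. The two Hecke-theoretic inputs I will use are Deligne's bound $|\lambda(n)|\le d(n)$, with $\lambda(p)^2=1+\lambda(p^2)$, and the Rankin--Selberg prime number theorem $\sum_{p\le y}\lambda(p)^2\log p\sim y$. Together these say that $\lambda(p)$ has mean square $1$ over primes. This is exactly what is needed to make the Euler products below behave like those of $\zeta$.

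\textbf{Reduction to Euler products.} I will write $n=mp$ with $p=P(n)$ the largest prime factor, and split according to dyadic ranges of $P(n)$ of the form $x^{e^{-(k+1)}}<P(n)\le x^{e^{-k}}$. Set $F_k(s)=\prod_{p\le x^{e^{-k}}}\sum_{j\ge0}\lambda(p^j)h(p)^j p^{-js}$ in the Steinhaus case; in the Rademacher case I use the square-free product $\prod_p(1+\lambda(p)h(p)p^{-s})$. Conditioning on the $h(p)$ for small primes and applying Hölder's inequality, I reduce the upper bound to
\[
\E|\sum_{n\le x}h(n)\lambda(n)|^{2q}\ll \sum_k \Big(\frac{x}{\log x}\Big)^q\,\E\Big(\int_{-1/2}^{1/2}|F_k(\tfrac12+\tfrac{ck}{\log x}+it)|^2\,dt\Big)^q+\text{smaller terms},
\]
just as in \cite[Prop.~1]{Harper20}. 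The Parseval/Plancherel step carries over verbatim, because $\E|h(p)|^2=1$ and the coefficient sums only pick up the factor $\sum_{m}|\lambda(m)|^2/m^{1+2\sigma}$. By Rankin--Selberg this factor is of the same order as $\sum 1/m^{1+2\sigma}$.

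\textbf{Multiplicative chaos estimates.} The core of the argument is the upper bound
\[
\E\Big(\int|F(\tfrac12+\sigma+it)|^2dt\Big)^q\ll \Big(\frac{\log x}{1+(1-q)\sqrt{\log\log x}}\Big)^q,
\]
together with a matching lower bound on a suitable event. I will follow Harper's critical multiplicative chaos method. First I condition on the behaviour of the partial products $F_j(t)$ along a "barrier" $\log|F_j|\le \log\log x-j+\text{slack}$. Then I use Girsanov-type change of measure computations for $\E|F_j(t)|^2$ and for two-point correlations, and finally a ballot-theorem probability for Gaussian random walks. All of this requires mean and covariance computations for the sums $\sum_p \Re(\lambda(p)h(p)p^{-1/2-it})$. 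Their variance is $\tfrac12\sum_p\lambda(p)^2/p\cdot(\cdots)$, and by Rankin--Selberg (with the prime number theorem in short ranges for $\lambda(p)^2$ and for $\lambda(p)^2 p^{-it}$, i.e.\ zero-free regions for $L(s,f\times f)$) this equals $\tfrac12\log\log$ plus $O(1)$ over each dyadic range, matching $\zeta$.

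\textbf{Main obstacle.} The hardest part is the higher moment comparisons. I need upper and lower bounds for $\E\prod_p|1+\lambda(p)h(p)p^{-s}|^{2a}$ with tilt factors, and $|\lambda(p)|$ can be as large as $2$, so the Taylor expansions are not uniformly controlled. I will treat small primes $p\le P_0$ (a large constant) crudely; they contribute a bounded factor. For large primes I expand the logarithm, where the error terms involve $\lambda(p)^3/p^{3/2}$ and $\lambda(p)^4/p^2$ and are summable. The main terms involve $\sum\lambda(p)^2 p^{-1-2\sigma}\cos(t\log p)$, which I evaluate using the Rankin--Selberg PNT, giving the same $\log$ of $\zeta$-type expressions. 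With these inputs, the ballot/barrier argument and Harper's lower bound construction, via $\E|\sum|^{2q}\gg$ a conditional $L^2$ computation on a good event, go through unchanged. Combining the upper and lower bounds gives \eqref{mainestimation} uniformly for $0\le q\le1$.
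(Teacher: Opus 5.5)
Your plan is essentially the paper's proof: Harper's reduction to Euler products (conditioning on small primes, Hölder, Parseval), followed by his Girsanov/barrier/ballot machinery for the multiplicative chaos bounds. In both, the only new arithmetic inputs are Deligne's bound, Rankin--Selberg, and the Mertens and prime number theorem asymptotics for $\lambda^2(p)$ coming from $L(s,\operatorname{sym}^2 f)$.

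One point needs to be stated correctly. The crucial $1/\log t$ saving in the short-interval bound for $\sum \lambda^2(m)$, over $t/(1+1/X)\le m\le t$ with all prime factors of $m$ in $(x^{e^{-(k+1)}},x^{e^{-k}}]$, does not come from Rankin--Selberg. Rankin--Selberg only gives $\ll t/X$, which would lose a power of $\log x$. The saving comes from a sieve bound: the paper uses $\lambda^2(m)\le d(m)^2\le e^{2\Omega(m)}$ together with Harper's Number Theory Result 1.
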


This work is motivated by the result of Harper \cite{Harper20}, which provides a general framework for the proof of Theorem \ref{lowerboundsfixedmodmean}.   Additionally, we need to utlize the arithmetic properties of $\lambda(n)$.

\section{Preliminaries}
\label{sec 2}

In this section, we gather some auxiliary results necessary in the proof of Theorem \ref{lowerboundsfixedmodmean}.

\subsection{Properties of $\lambda(n)$}
\label{sec:cusp form}

    Recall that $f$ is a fixed holomorphic Hecke eigenform $f$ of weight $\kappa \equiv 0 \pmod 4$ for the full modular group $SL_2 (\mathbb{Z})$. For $\Re(s)>1$, the associated modular $L$-function $L(s, f)$ is defined as
\begin{align*}
%%\label{Lphichi}
L(s, f ) &= \sum_{n=1}^{\infty} \frac{\lambda(n)}{n^s}
 = \prod_{p} \left(1 + \sum^{\infty}_{i=1}\frac{\lambda(p^i)}{p^{is}}\right)=\prod_{p} \left(1 - \frac{\alpha_p }{p^s} \right)^{-1}\left(1 - \frac{\beta_p }{p^s} \right)^{-1}.
\end{align*}
  It follows that $\lambda(n)$ is multiplicative and, for any prime $p$ and non-negative integer $m$,
\begin{align} \label{Lambdapkrel}
 \lambda(p^m)=\sum^{m}_{j=0}\alpha^{m-j}_p\beta^{j}_p.
\end{align}

Moreover, from Deligne's proof \cite{D} of the Weil conjecture,
\begin{align}
\label{alpha}
|\alpha_{p}|=|\beta_{p}|=1, \quad \alpha_{p}\beta_{p}=1.
\end{align}

Thus from \eqref{Lambdapkrel} and \eqref{alpha},
\begin{align} \label{alphalambdarel}
  \alpha_p+\beta_p=  \lambda(p), \quad \mbox{and} \quad \alpha^2_p+\beta^2_p= \lambda^2(p)-2=\lambda(p^2)-1.
\end{align}

 Furthermore, using \eqref{Lambdapkrel}, \eqref{alpha} and the multiplicativity of $\lambda(n)$, we deduce that $\lambda(n) \in \mr$, $\lambda (1) =1$ and for any $\varepsilon>0$,
\begin{align}
\label{lambdabound}
\begin{split}
  |\lambda(n)| \leq d(n) \ll n^{\varepsilon},
\end{split}
\end{align}
 where $d(n)$ is the number of positive divisors $n$ and can be bounded above as in \eqref{lambdabound} using \cite[Theorem 2.11]{MVa1}.

 Observe that $|\lambda(n)|^2=\lambda^2(n) \geq 0$ as $\lambda(n)$ is real. It was proved independently by R. A. Rankin \cite{Rankin39} and A. Selberg \cite{Selberg40} that, for $x \geq 1$,
\begin{align}
\label{lambdasquareasymp}
\begin{split}
  \sum_{n \leq x}\lambda^2(n) =\frac {L(1, \operatorname{sym}^2 f)}{\zeta(2)}x+O(x^{3/5}).
\end{split}
\end{align}
  Here $L(s, \operatorname{sym}^2 f)$ is the symmetric square $L$-function of $f$ defined for $\Re(s)>1$ by
 (see \cite[p. 137]{iwakow} and \cite[(25.73)]{iwakow})
\begin{align}
\label{Lsymexp}
\begin{split}
 L(s, \operatorname{sym}^2 f)=& \prod_p(1-\alpha^2_pp^{-s})^{-1}(1-p^{-s})^{-1}(1-\beta^2_pp^{-s})^{-1} \\
 = & \zeta(2s) \sum_{n \geq 1}\frac {\lambda(n^2)}{n^s}=\prod_{p}\left( 1-\frac {\lambda(p^2)}{p^s}+\frac {\lambda(p^2)}{p^{2s}}-\frac {1}{p^{3s}} \right)^{-1}.
\end{split}
\end{align}
 A result of G. Shimura \cite{Shimura} implies that the corresponding completed symmetric square $L$-function
\begin{align*}
%%\label{Lambdafdef}
 \Lambda(s, \operatorname{sym}^2 f)=& \pi^{-3s/2}\Gamma \Big( \frac {s+1}{2} \Big)\Gamma \Big(\frac {s+\kappa-1}{2}\Big) \Gamma \Big(\frac {s+\kappa}{2}\Big) L(s, \operatorname{sym}^2 f)
\end{align*}
  is entire and satisfies the functional equation $\Lambda(s, \operatorname{sym}^2 f)=\Lambda(1-s, \operatorname{sym}^2 f)$. Moreover, $L(s, \operatorname{sym}^2 f)$ has no pole at $s=1$. \newline

Next, we by include a result on certain sums over primes.
\begin{lemma}
\label{RS}
 Let $x \geq 2$. We have, for some constant $b_1$, $b_2$, $b_3$ with $b_2$, $b_3>0$,
\begin{align}
%%\label{merten}
%%\sum_{p\le x} \frac{1}{p} =& \log \log x + b_1+ O\Big(\frac{1}{\log x}\Big), \;  \\
\label{merten1}
\sum_{p\le x} \frac{\lambda^2(p)}{p} =& \log \log x + b_1+ O\Big(\frac{1}{\log x}\Big),  \\
\label{merten2-0}
\sum_{p\le x} \log p =&  x+O\Big(x \exp(-b_2\sqrt{\log x})\Big),  \; \mbox{and} \\
\label{merten2}
\sum_{p\le x} (\lambda^2(p)-1)\log p =&  O\Big(x \exp(-b_3\sqrt{\log x})\Big).
\end{align}
\end{lemma}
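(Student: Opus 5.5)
The three estimates are standard prime number theorem statements, and I would prove all of them through the $L$-functions $\zeta(s)$ and $L(s,\operatorname{sym}^2 f)$. Estimate \eqref{merten2-0} is the classical prime number theorem with de la Vallée Poussin error term. It comes from the zero-free region $\sigma > 1 - c/\log(|t|+2)$ for $\zeta(s)$ and the usual contour argument for $\psi(x)$. One then drops prime powers, which cost only $O(\sqrt{x}\log x)$. Nothing new is needed here, so I would just cite it.

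For \eqref{merten2}, I start from \eqref{alphalambdarel}, which gives $\lambda^2(p)-1=\lambda(p^2)=\alpha_p^2+1+\beta_p^2$. This is exactly the Dirichlet coefficient at $p$ of $-\frac{L'}{L}(s,\operatorname{sym}^2 f)$ divided by $\log p$. Precisely, by \eqref{Lsymexp},
\[
-\frac{L'}{L}(s,\operatorname{sym}^2 f)=\sum_{p}\sum_{k\ge1}\frac{(\alpha_p^{2k}+1+\beta_p^{2k})\log p}{p^{ks}}.
\]
So $\sum_{p\le x}(\lambda^2(p)-1)\log p=\psi_{\operatorname{sym}^2 f}(x)+O(\sqrt{x}\log x)$, where the error uses $|\alpha_p|=1$ from \eqref{alpha}. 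Next I apply the prime number theorem for $\psi_{\operatorname{sym}^2 f}$, following \cite[Theorem 5.13]{iwakow}. This needs three inputs: that $\Lambda(s,\operatorname{sym}^2 f)$ is entire of order one with the functional equation stated above (Shimura); that $L(s,\operatorname{sym}^2 f)$ has no pole at $s=1$, so there is no main term; and a standard zero-free region $\sigma>1-c/\log(|t|+3)$ with no exceptional zero.

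The main obstacle is this last input, the absence of a Siegel zero. Since $f$ is not dihedral (level one), $\operatorname{sym}^2 f$ is a cuspidal automorphic representation on $GL_3$ (Gelbart–Jacquet) and is self-dual. For a fixed $f$ any real zero is a fixed point $<1$, so the effective constant can be shrunk to exclude it. Alternatively, one can apply the Hoffstein–Lockhart / Goldfeld–Hoffstein–Lieman result directly. Either way the conclusion is $\psi_{\operatorname{sym}^2 f}(x)\ll x\exp(-b_3\sqrt{\log x})$, which gives \eqref{merten2}.

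Finally, \eqref{merten1} follows by partial summation. I write $\lambda^2(p)/p=1/p+(\lambda^2(p)-1)\log p\cdot\frac{1}{p\log p}$. The first part gives $\sum_{p\le x}1/p=\log\log x+M+O(1/\log x)$ by Mertens, or from \eqref{merten2-0}. For the second part, set $R(t)=\sum_{p\le t}(\lambda^2(p)-1)\log p$. Stieltjes integration against $1/(t\log t)$ gives
\[
\int_2^x \frac{dR(t)}{t\log t}=C-\int_x^\infty \frac{dR(t)}{t\log t},
\]
and the tail is $O(\exp(-b_3\sqrt{\log x}/2))=O(1/\log x)$, because $R(t)\ll t\exp(-b_3\sqrt{\log t})$ makes the integral converge. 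Combining the constants into $b_1$ finishes the proof.
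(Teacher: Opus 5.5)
Your proposal is correct. For \eqref{merten2-0} and \eqref{merten2} it follows the paper's route: the paper also quotes the classical prime number theorem for \eqref{merten2-0}. It then identifies $(\lambda^2(p)-1)\log p=(\alpha_p^2+\beta_p^2+1)\log p$ with $\Lambda_{\operatorname{sym}^2 f}(p)$ and applies the Iwaniec--Kowalski prime number theorem to $L(s,\operatorname{sym}^2 f)$.

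Two points differ.

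First, the paper rules out a Siegel zero by citing Banks \cite{Banks97}, which is essentially your ``alternative'' citation. Your main argument is more economical. Since $f$ is fixed, a possible exceptional zero $\beta<1$ contributes only $x^{\beta}\ll_f x\exp(-b\sqrt{\log x})$ for every $b>0$. Equivalently, you can shrink the zero-free-region constant in an $f$-dependent way so that it excludes $\beta$. All implied constants in the paper may depend on $f$, so this suffices. What Banks' theorem adds is an effective, uniform-in-$f$ constant $b_3$, which the paper never uses.

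Second, the paper takes \eqref{merten1} directly from \cite[Lemma 2.1]{GHH}. You instead deduce it from Mertens' theorem and \eqref{merten2} by partial summation. That computation is sound: the tail $\int_x^\infty \dif R(t)/(t\log t)$ is in fact $\ll e^{-b_3\sqrt{\log x}}/\sqrt{\log x}$. Your version makes the lemma self-contained, with \eqref{merten1} a consequence of \eqref{merten2} rather than a separate external input.
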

\begin{proof}
  The formula \eqref{merten1} follows from \cite[Lemma 2.1]{GHH}. The expression in \eqref{merten2-0} is given in \cite[Theorem 6.9]{MVa1}. Lastly, we have, for $\Re(s) \geq 2$,
\begin{align*}
%%\label{lambdasquareasymp}
\begin{split}
  -\frac {L'(s, \operatorname{sym}^2 f)}{L(s, \operatorname{sym}^2 f)}=\sum_{n \geq 1}\frac {\Lambda_{\operatorname{sym}^2 f}(n)}{n^s}.
\end{split}
\end{align*}
By \eqref{alphalambdarel} and \eqref{Lsymexp},
\begin{align*}
%%\label{lambdasquareasymp}
\begin{split}
  \Lambda_{\operatorname{sym}^2 f}(p)=(\alpha^2_p+\beta^2_p+1)\log p=(\lambda^2(p)-1)\log p.
\end{split}
\end{align*}
  Note further that $L(s, \operatorname{sym}^2 f)$ is a $\text{GL}(3)$ function.  By a result of W. D. Banks \cite{Banks97}, it does not admit any Siegel zero.  It follows from this and \cite[(5.49),(5.52)]{iwakow} that \eqref{merten2} holds. This completes the proof of the lemma.
\end{proof}

\subsection{Parseval’s identity}

    The following lemma is taken from \cite[Theorem 5.4]{MVa1}, which gives a version of Parseval’s identity for Dirichlet series.
\begin{lemma}
\label{parseval}
    Let $(a_n)_{n\geq 1}$ be a sequence of complex numbers and $F(s)=\sum_{n=1}^{\infty} a_nn^{-s}$ be the corresponding Dirichlet series. If $\sigma_c$ denotes its abscissa of convergence, then, for any $\sigma>\max\{ 0,\sigma_c \}$, we have
    \begin{equation*}
        \int\limits_{1}^{\infty} \frac{\big|\sum_{n\leq x}a_n\big|^2 }{x^{1+2\sigma }} \dif x=\frac{1}{2\pi}\int\limits_{-\infty}^{+\infty}\frac{|F(\sigma+it)|^2}{|\sigma+it|^2} \dif t.
    \end{equation*}
\end{lemma}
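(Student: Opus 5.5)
The statement to prove is Lemma \ref{parseval}, Parseval's identity for Dirichlet series, and the plan is the standard Mellin/Plancherel argument. Set $A(x)=\sum_{n\le x}a_n$ and fix $\sigma>\max\{0,\sigma_c\}$.

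The first step is the Mellin representation of $F$. Partial summation gives, for $\Re s>\max\{0,\sigma_c\}$,
\begin{equation*}
F(s)=s\int_1^\infty A(x)x^{-s-1}\,\dif x .
\end{equation*}
This uses the bound $A(x)\ll_\varepsilon x^{\max\{0,\sigma_c\}+\varepsilon}$, which is the standard characterization of the abscissa of convergence. For $\sigma_c\ge 0$ it follows by summing $a_nn^{-s_0}$ by parts at a point $s_0>\sigma_c$ close to $\sigma_c$. For $\sigma_c<0$ the series converges at $s=0$, so $A(x)$ is bounded. With this bound the integral converges absolutely for $\Re s>\max\{0,\sigma_c\}$.

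Next, substitute $x=e^u$. Then
\begin{equation*}
\frac{F(\sigma+it)}{\sigma+it}=\int_0^\infty g(u)e^{-itu}\,\dif u, \qquad g(u)=A(e^u)e^{-\sigma u}.
\end{equation*}
So $F(\sigma+it)/(\sigma+it)$ is exactly the Fourier transform $\widehat g(t)$ of the function $g$, extended by zero on $u<0$. Choosing $\sigma_1$ with $\max\{0,\sigma_c\}<\sigma_1<\sigma$, the growth bound gives $g(u)\ll e^{-(\sigma-\sigma_1)u}$, so $g\in L^1\cap L^2(\mr)$.

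Plancherel's theorem then gives
\begin{equation*}
\int_{\mr}|g(u)|^2\,\dif u=\frac1{2\pi}\int_{\mr}|\widehat g(t)|^2\,\dif t .
\end{equation*}
Undoing the substitution, $\int_0^\infty |A(e^u)|^2e^{-2\sigma u}\,\dif u=\int_1^\infty |A(x)|^2x^{-1-2\sigma}\,\dif x$. Combined with the previous identification of $\widehat g$, this is exactly the stated identity.

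The only delicate point is the growth bound on $A(x)$ that places $g$ in $L^2$. The case $\sigma_c<0$ is handled by the boundedness noted above. Everything else is routine.
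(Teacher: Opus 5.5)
Your proof is correct, and it is the standard argument behind \cite[Theorem 5.4]{MVa1}, which the paper cites in place of giving its own proof: the Mellin representation $F(s)=s\int_1^\infty A(x)x^{-s-1}\,\dif x$ with the growth bound on $A(x)$, then the substitution $x=e^u$ and Plancherel. Treating $\sigma_c<0$ separately via bounded partial sums properly accounts for the $\max\{0,\sigma_c\}$ in the hypothesis.
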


\subsection{Estimates on Expectations}

  Recall that we denote the expectation by $\mathbb{E}$.  Furthermore, denote the probability measure by $\p$ and the indicator function by $\textbf{1}$.  We first have a result dealing with Steinhaus random multiplicative functions.
\begin{lemma}
\label{eulerproduct}
    Let $h(n)$ be a Steinhaus random multiplicative function. We have, for any real $t$ and $u$, $400(1 + u^2) \leq x \leq y$, and $\sigma \geq - 1/\log y$,
\begin{align}
\label{Eest}
\begin{split}
 \E \prod_{x < p \leq y} & \Big|1-\frac{\alpha_ph(p)}{p^{1/2+\sigma}}\Big|^{-2}\Big|1-\frac{\beta_ph(p)}{p^{1/2+\sigma}}\Big|^{-2}
\Big|1-\frac{\alpha_ph(p)}{p^{1/2+\sigma+it}}\Big|^{-iu}\Big|1-\frac{\beta_ph(p)}{p^{1/2+\sigma+it}}\Big|^{-iu}\\
=&  \exp\Big(\sum_{x < p \leq y} \frac{\lambda^2(p)(1 + iu\cos(t\log p) - u^{2}/4)}{p^{1 + 2\sigma}} + T(u)\Big),
\end{split}
\end{align}
where $T(u) = T_{x,y,\sigma,t}(u)$ satisfies $T(u) \ll \frac{1 + |u|^3}{\sqrt{x} \log x}$ for any $u$, and $T'(u) \ll \frac{1}{\sqrt{x} \log x}$ for $|u| \leq 1$. \newline

  Moreover, for any real $t,u,v$, $400(1 + u^2 + v^2) \leq x \leq y$, and $\sigma \geq - 1/\log y$, we have
\begin{align}
\label{Eest2}
\begin{split}
\E \prod_{x < p \leq y} & \left|1 - \frac{\alpha_ph(p)}{p^{1/2+\sigma}}\right|^{-(2+iu)}\left|1 - \frac{\beta_ph(p)}{p^{1/2+\sigma}}\right|^{-(2+iu)} \left|1 - \frac{\alpha_ph(p)}{p^{1/2+\sigma+it}}\right|^{-(2+iv)}\left|1 - \frac{\beta_ph(p)}{p^{1/2+\sigma+it}}\right|^{-(2+iv)}  \\
 = & \exp\Big ( \sum_{x < p \leq y} \frac{\lambda^2(p)((1+iu/2)^2 + (1+iv/2)^2)}{p^{1 + 2\sigma}} + \sum_{x < p \leq y} \frac{\lambda^2(p)(2+iu)(2+iv)\cos(t\log p)}{2p^{1+2\sigma}} + T(u,v)\Big ) ,
\end{split}
\end{align}
where $T(u,v) = T_{x,y,\sigma,t}(u,v)$ and obeys the bounds $T(u,v) \ll \frac{1 + |u|^3 + |v|^3}{\sqrt{x} \log x}$, $\frac{\partial T(u,v)}{\partial u} \ll \frac{1 + u^2 + v^2}{\sqrt{x} \log x}$, $\frac{\partial T(u,v)}{\partial v} \ll \frac{1 + u^2 + v^2}{\sqrt{x} \log x}$, and $\frac{\partial T(u,v)}{\partial u \partial v} \ll \frac{1 + |u| + |v|}{\sqrt{x} \log x}$.
\end{lemma}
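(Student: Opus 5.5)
Since the $h(p)$ are independent, the expectation of the product over $x<p\le y$ factors as a product over primes of single-prime expectations. So it suffices to show that for each prime $p>x$,
\[
\E\Big|1-\tfrac{\alpha_ph(p)}{p^{1/2+\sigma}}\Big|^{-2}\Big|1-\tfrac{\beta_ph(p)}{p^{1/2+\sigma}}\Big|^{-2}\Big|1-\tfrac{\alpha_ph(p)}{p^{1/2+\sigma+it}}\Big|^{-iu}\Big|1-\tfrac{\beta_ph(p)}{p^{1/2+\sigma+it}}\Big|^{-iu}
=\exp\Big(\frac{\lambda^2(p)(1+iu\cos(t\log p)-u^2/4)}{p^{1+2\sigma}}+O\Big(\frac{1+|u|^3}{p^{3/2+3\sigma}}\Big)\Big),
\]
together with the analogous derivative bound in $u$. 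Summing the errors then gives $T(u)$. Since $\sigma\ge -1/\log y$, we have $p^{-3\sigma}\ll 1$ for $p\le y$, and $\sum_{p>x}p^{-3/2}\ll 1/(\sqrt{x}\log x)$, which is the required bound.

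For a single prime, write $z=h(p)/p^{1/2+\sigma}$, so that $|z|\le p^{-1/2+1/\log y}\le 2p^{-1/2}$ is small because $p>x\ge 400(1+u^2)$. Use the principal branch of the logarithm: $|1-az|^{-w}=\exp(w\,\Re\sum_{k\ge1}a^kz^k/k)$ for any complex $w$. The exponent then equals $E_1+E_2+R$, where
\[
E_1=\Re\big((2(\alpha_p+\beta_p)+iu(\alpha_p+\beta_p)p^{-it})z\big),\qquad
E_2=\tfrac12\Re\big((2(\alpha_p^2+\beta_p^2)+iu(\alpha_p^2+\beta_p^2)p^{-2it})z^2\big),
\]
and $R\ll(1+|u|)p^{-3/2}$. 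Note that $|E_1|\ll(1+|u|)p^{-1/2}$, so the condition $p\ge 400(1+u^2)$ keeps $|E_1|$ bounded. Expand $\exp(E_1+E_2+R)$ in a Taylor series and take the expectation over $h(p)$ uniform on the circle. Terms of the form $h(p)^j\overline{h(p)}^k$ with $j\ne k$ vanish, so $\E E_1=\E E_2=0$. The surviving second-order term is $\E E_1^2/2$. Writing $E_1=\Re(Az)$ with $A=\lambda(p)(2+iu\,p^{-it})$ (using $\alpha_p+\beta_p=\lambda(p)$ from \eqref{alphalambdarel}), we get
\[
\E(\Re(Az))^2=\tfrac12 |A|^2|z|^2\quad\text{for real parts,}
\]
but $u$ is real and enters through $iu$. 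The cleaner route is to keep the exponent as a complex quantity and compute $\E\, e^{\Re(Az)}$-type expressions via $\E(\Re(Az))^2=\tfrac12\Re(A\bar A)|z|^2$ after splitting into real-linear pieces. The pieces are $2\Re(\lambda(p) z)+iu\Re(\lambda(p)p^{-it}z)$. Their second moment gives
\[
\tfrac12\lambda^2(p)|z|^2\big(4+4iu\cos(t\log p)-u^2\big),
\]
and half of this is $\lambda^2(p)(1+iu\cos(t\log p)-u^2/4)/p^{1+2\sigma}$, matching the claimed main term. Since $\lambda(p)$ is real by \eqref{alpha}, the quantity $\lambda^2(p)$ appears directly. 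The $E_2$ term has zero mean, and its cross terms with $E_1$ involve $z^2\bar z$, which has zero mean. Every remaining term is $O((1+|u|)^3p^{-3/2})$ times bounded factors. Finally, convert $1+m+O(\cdot)$ into $\exp(m+O(\cdot))$ using $m\ll (1+u^2)/p\le 1/400$.

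For the derivative claim, differentiate the single-prime identity in $u$. The error term is an analytic function of $u$ coming from an absolutely convergent series, and for $|u|\le1$ each $u$-derivative costs no growth, giving $O(p^{-3/2})$ per prime. The two-variable statement \eqref{Eest2} is handled identically. The main term there is half the second moment of $\Re\big(\lambda(p)((2+iu)z+(2+iv)p^{-it}z)\big)$, which yields the quadratic forms shown; the mixed derivatives are bounded by the same Cauchy-estimate argument.

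The main obstacle is the bookkeeping of the error terms. One must check uniformly that the cubic and higher Taylor terms, and the $z^2$ contributions from $\alpha_p^2+\beta_p^2$, are $\ll (1+|u|^3)p^{-3/2}$. This needs the hypothesis $x\ge400(1+u^2)$ so that the Taylor series converge with good constants, and the derivative bounds follow by the same estimates term by term.
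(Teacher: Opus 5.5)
Your proposal is correct and follows essentially the same route as the paper's proof, which defers to Harper's argument \cite[Lemmas 1, 6]{Harper20} as adapted in the cited G\&Wu lemma: factor by independence, Taylor-expand each prime's factor, read off the main term from $\tfrac12\E E_1^2$, and sum the $O((1+|u|)^3p^{-3/2})$ per-prime errors. Two small corrections: on $p\le y$ one has $p^{-\sigma}\le e$ (not $2$); and for the two-variable derivative bounds, which are claimed for unbounded $u,v$, you should take the Cauchy polydisc of radius $\asymp 1+|u|+|v|$ rather than a fixed radius (a fixed radius only gives cubic growth), which then yields exactly the stated $(1+u^2+v^2)$ and $(1+|u|+|v|)$ factors.
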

\begin{proof}
 The proof for \eqref{Eest} is given in \cite[Lemma 2.9]{G&Wu25-12}. The relation in \eqref{Eest2} can be similarly obtained by extending the arguments used in the proof of \cite[Lemma 2.9]{G&Wu25-12} and that of \cite[Lemma 6]{Harper20}.
\end{proof}

 Note that as a special case of \eqref{Eest} and by a similar argument, we have for any $400 \leq x \leq y$, $\sigma \geq -1/\log y$ and any Steinhaus random multiplicative function $h(n)$,
\begin{align}
\label{Eprodsquare}
\begin{split}
\E \prod_{x < p \leq y} \Big|1-\frac{\alpha_ph(p)}{p^{1/2+\sigma}}\Big|^{-2}\Big|1-\frac{\beta_ph(p)}{p^{1/2+\sigma}}\Big|^{-2} = & \exp\Big (\sum_{x < p \leq y} \frac{\lambda^2(p)}{p^{1 + 2\sigma}} + O\Big(\frac{1}{\sqrt{x}\log x} \Big) \Big ),  \quad \mbox{and} \\
\E \prod_{x < p \leq y} \Big|1-\frac{\alpha_ph(p)}{p^{1/2+\sigma}}\Big|^{2}\Big|1-\frac{\beta_ph(p)}{p^{1/2+\sigma}}\Big|^{2} = & \exp\Big (\sum_{x < p \leq y} \frac{\lambda^2(p)}{p^{1 + 2\sigma}} + O\Big(\frac{1}{\sqrt{x}\log x} \Big) \Big ).
\end{split}
\end{align}

Similarly, \eqref{Eest2} implies that for any real $t$, $400 \leq x \leq y$, $\sigma \geq -1/\log y$ and Steinhaus random multiplicative function $h(n)$,
\begin{align}
\label{sttwoprods}
\begin{split}
\E \prod_{x < p \leq y} & \left|1 - \frac{\alpha_ph(p)}{p^{1/2+\sigma}}\right|^{-2}\left|1 - \frac{\beta_ph(p)}{p^{1/2+\sigma}}\right|^{-2} \left|1 - \frac{\alpha_ph(p)}{p^{1/2+\sigma}}\right|^{-2}\left|1 - \frac{\beta_ph(p)}{p^{1/2+\sigma}}\right|^{-2} \\
& =  \exp\Big ( \sum_{x < p \leq y} \frac{\lambda^2(p)(2 + 2\cos(t\log p))}{p^{1+2\sigma}} + O\Big(\frac{1}{\sqrt{x}\log x} \Big) \Big ) .
\end{split}
\end{align}

  Our next result gives a Rademacher analogue of Lemma \ref{eulerproduct}.
\begin{lemma}
\label{eulerproductR}
Let $h(n)$ be a Rademacher random multiplicative function, then for any real $t_1 , t_2$ and $u$, $400(1 + u^2) \leq x \leq y$, and $\sigma \geq - 1/\log y$, we have
\begin{align*}
%%\label{Eprodsquare}
\begin{split}
\E \prod_{x < p \leq y} & \left|1 + \frac{\lambda(p)h(p)}{p^{1/2+\sigma+it_1}}\right|^{2}\left|1 + \frac{\lambda(p) h(p)}{p^{1/2+\sigma+i(t_1 + t_2)}}\right|^{iu} \\
 = & \exp\Big(\sum_{x < p \leq y} \frac{\lambda^2(p)(1 + iuc(t_1 , t_2 , p) - (u^{2}/4)(1+\cos(2(t_1 + t_2)\log p)))}{p^{1 + 2\sigma}} + \tilde T(u)\Big) .
\end{split}
\end{align*}
Here $c(t_1 , t_2 , p) = 2\cos(t_1 \log p) \cos((t_1 + t_2)\log p) - \tfrac12 \cos(2(t_1+t_2)\log p)$, and $\tilde T(u) = \tilde T_{x,y,\sigma,t_1,t_2}(u)$ satisfies $\tilde T_(u) \ll \frac{1 + |u|^3}{\sqrt{x} \log x}$ for any $u$, and $\tilde T'(u) \ll \frac{1}{\sqrt{x} \log x}$ when $|u| \leq 1$.
\end{lemma}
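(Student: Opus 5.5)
Since the $h(p)$ are independent, the expectation factorises over primes $x<p\le y$. It therefore suffices to compute, for each such $p$, the quantity
\[
E_p:=\E\, \Big|1 + \frac{\lambda(p)h(p)}{p^{1/2+\sigma+it_1}}\Big|^{2}\Big|1 + \frac{\lambda(p) h(p)}{p^{1/2+\sigma+i(t_1 + t_2)}}\Big|^{iu},
\]
and to show that $\log E_p$ equals $\lambda^2(p)\bigl(1 + iuc(t_1,t_2,p) - (u^2/4)(1+\cos(2(t_1+t_2)\log p))\bigr)p^{-1-2\sigma}$ plus an error $O\bigl((1+|u|^3)p^{-3/2-3\sigma}\bigr)$. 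Summing these errors over $p>x$ is routine: since $\sigma\ge -1/\log y$ gives $p^{-3\sigma}\le e^{3}$ for $p\le y$, and $|\lambda(p)|\le 2$, partial summation with \eqref{merten2-0} yields $\tilde T(u)\ll (1+|u|^3)/(\sqrt{x}\log x)$. The hypothesis $x\ge 400(1+u^2)$ guarantees that $|u|\,p^{-1/2-\sigma}$ is small, so the logarithms and Taylor expansions below are legitimate.

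For the local computation, set $a=\lambda(p)p^{-1/2-\sigma}$, $\theta_1=t_1\log p$ and $\theta_2=(t_1+t_2)\log p$. Since $|a|\le 2e/20$ is small, we have
\[
\log|1+aeh(p)e^{-i\theta}| = \Re\log(1+ah(p)e^{-i\theta}) = a h(p)\cos\theta - \tfrac{a^2}{2}\cos 2\theta + O(a^3).
\]
Because $h(p)=\pm1$ is real, this expansion is exact in form. The $|\cdot|^2$ factor is a polynomial, namely $1+2ah(p)\cos\theta_1+a^2$. The second factor is $\exp(iu\cdot[\cdot])$, which we expand as
\[
1 + iu\bigl(ah(p)\cos\theta_2 - \tfrac{a^2}{2}\cos2\theta_2\bigr) - \tfrac{u^2}{2}a^2\cos^2\theta_2 + O\bigl((1+|u|^3)|a|^3\bigr).
\]
We then multiply the two factors and average over $h(p)=\pm1$, which kills the odd powers of $h(p)$ and uses $h(p)^2=1$. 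This gives
\[
E_p = 1 + a^2\Bigl(1 + iu\bigl(2\cos\theta_1\cos\theta_2 - \tfrac12\cos 2\theta_2\bigr) - \tfrac{u^2}{4}\bigl(1+\cos2\theta_2\bigr)\Bigr) + O\bigl((1+|u|^3)|a|^3\bigr),
\]
using $\cos^2\theta_2 = (1+\cos 2\theta_2)/2$. Taking logarithms introduces only an additional $O(a^4(1+u^4))$ term. The factor $(1+u^4)$ must be absorbed into $(1+|u|^3)|a|^3$, and this works because $|u||a|\ll 1$ under the hypothesis on $x$. This recovers exactly the main term with $c(t_1,t_2,p)$.

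For the derivative bound, I would differentiate $\log E_p$ in $u$ directly rather than the expansion. We have $E_p' = \E[\,\cdots\, i\log|1+\cdots|\,]$, so for $|u|\le1$ the error part of $\frac{d}{du}\log E_p$ is $O(|a|^3)$, since every term involving $u$ in the remainder carries $a^3$ with bounded coefficients. Summing over $p>x$ then gives $\tilde T'(u)\ll 1/(\sqrt{x}\log x)$.

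The main obstacle is the bookkeeping showing that the remainder is uniformly $O((1+|u|^3)|a|^3)$, including the differentiated version. It is cleanest to handle this by writing $E_p$ as an exact finite expression: the average of two explicit terms, one for $h(p)=1$ and one for $h(p)=-1$. One then applies Taylor's theorem with explicit remainder in the variable $a$, treating $u$ as a parameter, exactly as in the Steinhaus case \cite[Lemma 2.9]{G&Wu25-12} and in \cite[Lemma 6]{Harper20}.
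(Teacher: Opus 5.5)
Your proposal is correct and follows essentially the same route as the paper: factorise by independence, Taylor-expand $\log|1+\lambda(p)h(p)p^{-1/2-\sigma-it}|$ to second order, and average over $h(p)=\pm1$ to get $1+\lambda^2(p)\bigl(1+iuc(t_1,t_2,p)-(u^2/4)(1+\cos(2(t_1+t_2)\log p))\bigr)p^{-1-2\sigma}+O((1+|u|^3)p^{-3/2-3\sigma})$, then take logarithms and sum over $p>x$ as in Harper's Lemma 1. The only cosmetic difference is that you keep the $|\cdot|^2$ factor as the exact quadratic $1+2ah(p)\cos\theta_1+a^2$ and expand only $e^{iuL}$, whereas the paper expands the whole exponential $\exp(2R_p(t_1)+iuR_p(t_1+t_2))$ as a power series.
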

\begin{proof}
 Our proof is a variant of those of \cite[Lemmas 1--2]{Harper20}.  Set
$$R_{p}(t) := \Re\log \Big(1 + \frac{\lambda(p)h(p)}{p^{1/2+\sigma+it}} \Big).$$
Then
\begin{align*}
%%\label{Eprodsquare}
\begin{split}
& \left|1 + \frac{\lambda(p)h(p)}{p^{1/2+\sigma+it_1}}\right|^{2}\left|1 + \frac{\lambda(p) h(p)}{p^{1/2+\sigma+i(t_1 + t_2)}}\right|^{iu}
= \exp\big (2 R_{p}(t_1) +iu R_{p}(t_1 + t_2) \big )  =  1 + \sum_{j=1}^{\infty} \frac{(2 R_{p}(t_1) +iu R_{p}(t_1 + t_2))^j}{j!} .
\end{split}
\end{align*}

Applying Taylor expansion, the estimation given in \eqref{lambdabound} and the fact that $h(p) \in \{\pm 1\}$, we see that
$$R_{p}(t) = \sum_{k=1}^{\infty} (-1)^{k-1} \frac{\Re(\lambda(p)h(p)p^{-it})^k}{k p^{k(1/2+\sigma)}} = \frac{\lambda(p)h(p) \cos(t\log p)}{p^{1/2+\sigma}} - \frac{\lambda^2(p)\cos(2t\log p)}{2p^{1+2\sigma}} + O\Big(\frac{1}{p^{3/2+3\sigma}}\Big).$$
  The above implies that $\E R_{p}(t) = - \frac{\lambda^2(p)\cos(2t\log p)}{2p^{1+2\sigma}} + O\Big(\frac{1}{p^{3/2+3\sigma}}\Big)$. Note also
\begin{align*}
%%\label{Eprodsquare}
\begin{split}
 \E R_{p}(t)^2 =& \frac{\lambda^2(p)\cos^{2}(t\log p)}{p^{1 + 2\sigma}} + O\Big(\frac{1}{p^{3/2+3\sigma}}\Big) = \frac{\lambda^2(p)(1+\cos(2t\log p))}{2 p^{1 + 2\sigma}} + O\Big(\frac{1}{p^{3/2+3\sigma}}\Big) ,  \\
 \E R_{p}(t_1) R_{p}(t_1 + t_2) =& \frac{\lambda^2(p)\cos(t_1 \log p) \cos((t_1 + t_2)\log p)}{p^{1 + 2\sigma}} + O\Big(\frac{1}{p^{3/2+3\sigma}}\Big).
\end{split}
\end{align*}
  Moreover, we apply \eqref{lambdabound} to see that, for $j \geq 3$, $|R_{p}(t)^j| \leq (\sum_{k=1}^{\infty} \frac{|\lambda(p)|^k}{p^{k(1/2+\sigma)}})^j \leq (\sum_{k=1}^{\infty} \frac{2^k}{p^{k(1/2+\sigma)}})^j= \frac{2^j}{(p^{1/2+\sigma}-2)^j}$. \newline

  We observe from the proof of \cite[Lemma 1]{Harper20} that for primes $y \geq p > x \geq 400(1+u^2)$,  we have $(2+|u|)/p^{1/2 + \sigma} \leq e/5$. It follows that for such primes,
\begin{align*}
%%\label{Eprodsquare}
\begin{split}
 \E & \left|1 + \frac{\lambda(p)h(p)}{p^{1/2+\sigma+it_1}}\right|^{2} \left|1 + \frac{\lambda(p)h(p)}{p^{1/2+\sigma+i(t_1 + t_2)}}\right|^{iu}  \\
& =  1 - \frac{\lambda^2(p)\cos(2t_1 \log p)}{p^{1+2\sigma}} - \frac{\lambda^2(p)(iu/2)\cos(2(t_1 + t_2)\log p)}{p^{1+2\sigma}} + O(\frac{1 + |u|}{p^{3/2+3\sigma}})  \\
& \hspace*{1cm} + \frac{\lambda^2(p)(1+\cos(2t_1 \log p)) + 2iu \lambda^2(p)\cos(t_1 \log p) \cos((t_1 + t_2)\log p) - \frac{u^{2}}{4}\lambda^2(p)(1+\cos(2(t_1 + t_2)\log p))}{p^{1+2\sigma}}  \\
& \hspace*{1cm}  + \E\sum_{j=3}^{\infty} \frac{(2 R_{p}(t_1) +iu R_{p}(t_1 + t_2))^j}{j!}  + O\Big( \frac{1 + |u|}{p^{3/2+3\sigma}} + \frac{1 + u^2}{p^{3/2+3\sigma}} \Big) \\
& =  1 + \lambda^2(p)\frac{1+iuc(t_1 , t_2 , p) - (u^{2}/4)(1+\cos(2(t_1 + t_2)\log p))}{p^{1+2\sigma}} + D_{p}(u) ,
\end{split}
\end{align*}
where $D_{p}(u)$ satisfies $D_{p}(u) \ll \frac{1+|u|^3}{p^{3/2+3\sigma}} \ll \frac{1+|u|^3}{p^{3/2}}$, and $D_{p}'(u) \ll \frac{1}{p^{3/2+3\sigma}} \ll \frac{1}{p^{3/2}}$ for $|u| \leq 1$. \newline

We then proceed as in the proof of \cite[Lemma 1]{Harper20} to see that the assertions of the lemma are valid. This completes the proof of the lemma.
\end{proof}

  Analogous to \eqref{Eprodsquare}, we deduce from (the proof) of Lemma \ref{eulerproductR} that for any $400 \leq x \leq y$, $\sigma \geq -1/\log y$, $t_1 \in \rear$ and Rademacher random multiplicative $h(n)$,
\begin{align}
\label{EprodsquareR}
\begin{split}
\E \prod_{x < p \leq y} \left|1 + \frac{\lambda(p)h(p)}{p^{1/2+\sigma+it_1}}\right|^{2} =& \exp\Big (\sum_{x < p \leq y} \frac{ \lambda^2(p)}{p^{1 + 2\sigma}} + O\Big(\frac{1}{\sqrt{x}\log x} \Big) \Big ) , \quad \mbox{and}  \\
\E \prod_{x < p \leq y} \left|1 + \frac{\lambda(p)h(p)}{p^{1/2+\sigma+it_1}}\right|^{-2} =& \exp\Big (\sum_{x < p \leq y} \frac{\lambda^2(p)(1 + 2\cos(2 t_1 \log p))}{p^{1 + 2\sigma}} + O\Big(\frac{1}{\sqrt{x}\log x}\Big) \Big ) .
\end{split}
\end{align}

  Our next result is an extension of the lower bound part of Khintchine's inequality (see \cite[Lemma 8.1, Chap. 3]{gut}) to both the Steinhaus and the Rademacher cases.
\begin{lemma}
\label{Khintchine}
     Let $h(n)$ be either a Steinhaus or a Rademacher  random multiplicative function and $(a_n)_{n\geq 1}$ a sequence of complex numbers. The we have, for any $m>1$,
\begin{align}
\label{KTlower}
\begin{split}
 \E(|\sum_n h(n)\lambda(n)a_n|^m)^{1/m}
\gg (\sum_n \lambda^2(n)|a_n|^2)^{1/2}.
\end{split}
\end{align}
\end{lemma}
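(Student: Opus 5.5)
The plan is to reduce everything to the second moment, where orthogonality of the random multiplicative function gives the right-hand side of \eqref{KTlower} exactly, and then pass from the second moment to the $m$-th moment. Write $b_n := \lambda(n)a_n$ and $S := \sum_n h(n)b_n$. By a standard truncation and limiting argument (monotone convergence, or Fatou's lemma on partial sums) we may assume that only finitely many $a_n$ are nonzero.

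\emph{Step 1: the second moment.} In the Steinhaus case, $h(n)\overline{h(n')}=\prod_p h(p)^{v_p(n)-v_p(n')}$, and each $h(p)$ is uniform on the unit circle. Hence $\E\, h(n)\overline{h(n')} = \mathbf{1}_{n=n'}$, and
\[ \E|S|^2=\sum_n \lambda^2(n)|a_n|^2 . \]
Here we use that $\lambda(n)$ is real, so $|b_n|^2=\lambda^2(n)|a_n|^2$. In the Rademacher case, $h$ vanishes off the square-free integers. For square-free $n,n'$ we have $h(n)h(n')=\prod_{p\mid nn'/(n,n')^2}h(p)$, which has mean $\mathbf{1}_{n=n'}$. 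So the same identity holds, with the sum on the right restricted to square-free $n$. Thus in the Rademacher case the lemma is to be read with $a_n$ supported on square-free $n$, equivalently with the right side summed over $n$ in the support of $h$. For non-square-free $n$ the left side simply does not see $a_n$, so this is the only meaningful reading.

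\emph{Step 2: $m\ge 2$.} By H\"older's (or Jensen's) inequality, $\E|S|^m\ge (\E|S|^2)^{m/2}$. Taking $m$-th roots and inserting Step 1 gives \eqref{KTlower} with implied constant $1$. This settles the lemma for every $m\ge2$ and is the case needed for the lower bounds on the second and higher moments.

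\emph{Step 3: $1<m<2$, the main obstacle.} The classical route is log-convexity of $L^p$ norms. Writing $\tfrac12=\tfrac{\theta}{m}+\tfrac{1-\theta}{4}$, H\"older gives
\[ \E|S|^2\le (\E|S|^m)^{2\theta/m}(\E|S|^4)^{(1-\theta)/2}. \]
This gives \eqref{KTlower} as soon as one has a reverse (hypercontractive) bound $\E|S|^4\ll(\E|S|^2)^2$. For sums of independent variables, or more generally for chaoses of bounded degree, this holds: when each $n$ in the support of $a_n$ has at most $k$ prime factors, Bonami's hypercontractivity gives it with a constant depending on $k$. I would therefore prove the case $1<m<2$ under such a bounded-degree hypothesis. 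The most natural instance is degree one, i.e. $a_n$ supported on primes, where the classical Khintchine inequality applies directly. One could also condition on $h$ at all but one prime, as in the applications.

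I expect the case $1<m<2$ without any structural restriction to be the genuine difficulty, and in fact the statement fails there. Harper's result \eqref{randlowmoment} shows that with $a_n=\mathbf{1}_{n\le x}$ the left side is $o$ of the right side for fixed $q=m/2<1$; the same happens here by Theorem \ref{lowerboundsfixedmodmean}. So for $1<m<2$ the implied constant cannot be uniform in $(a_n)$. The proof I would write therefore consists of Steps 1--2 in full generality, together with the bounded-degree version of Step 3.
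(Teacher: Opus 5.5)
Your proposal is correct, and your diagnosis of the statement is right. The problem lies in the paper's own proof.

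The paper argues via the moment generating function. It writes $\E e^{tS}=\prod_n \E e^{t h(n)\lambda(n)a_n}$ and derives a sub-Gaussian tail for $|S|$. This gives $\|S\|_{m'}\ll \|S\|_2$ for the dual exponent $m'=m/(m-1)$, and the paper concludes from $\E|S|^2\le \|S\|_m\|S\|_{m'}$. That is the classical Khintchine argument. Its first step, however, treats the $h(n)$, $n\ge 1$, as independent, which fails for a multiplicative function: $h(4)=h(2)^2$ in the Steinhaus case, and $h(6)=h(2)h(3)$ in both cases.

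The statement itself is false in that generality, exactly as you say. For $1<m<2$ and $a_n=\mathbf{1}_{n\le x}$, the upper bound of Theorem~\ref{lowerboundsfixedmodmean} (proved without this lemma) gives $\|S\|_m=o(\sqrt{x})$. The right side of \eqref{KTlower} is $\asymp\sqrt{x}$ by \eqref{lambdasquareasymp}. Your squarefree caveat is also needed: in the Rademacher case, $a_4=1$ and $a_n=0$ otherwise makes the left side vanish, while $\lambda(4)=\lambda^2(2)-1$ need not be $0$.

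The only use of the lemma, in \eqref{Elowercond}, is the degree-one case. Conditionally on $(h(p))_{p\le\sqrt{x}}$, the sum is a linear form in the independent $h(p)$, $\sqrt{x}<p\le x$, with $m=2q\in[4/3,2]$. There the paper's argument is valid, and so is your Step 3, so the paper's lower bound survives. For complex coefficients the Chernoff bound must be applied to real and imaginary parts separately, since $\E e^{tS}=1$ says nothing about $|S|$.

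On that valid case the two routes differ as follows.
\begin{itemize}
\item The duality route needs only a Chernoff bound. Its constant degrades as $m\to 1^{+}$, because the sub-Gaussian tail gives $\|S\|_{m'}\ll\sqrt{m'}\,\|S\|_2$ with $m'\to\infty$. It also does not reach beyond independent summands, since the moment generating function no longer factors.
\item Your interpolation between $L^m$ and $L^4$ gives a constant uniform in $m\in(1,2]$. In degree one the needed bound $\E|S|^4\le 3(\E|S|^2)^2$ is a direct fourth-moment computation, so hypercontractivity is only required for your bounded-degree extension.
\item Your Step 2 (Jensen for $m\ge 2$) holds for genuinely multiplicative $h$ with no restriction.
\end{itemize}

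One precision for the bounded-degree extension. In the Steinhaus case $h(n)$ is a monomial of degree $\Omega(n)$ in the $h(p)$. So the hypothesis must bound prime factors counted with multiplicity; otherwise powers of a single prime already give unbounded degree. The hypercontractive inequality to quote there is Weissler's for analytic polynomials on the torus, rather than Bonami's.
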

\begin{proof}
For any $t>0$,
\begin{align*}
%%\label{EPnsmall}
 \E (e^{t\sum h(n)\lambda(n)a_n})=\prod_n \E (e^{t h(n)\lambda(n)a_n})=
\begin{cases}
\displaystyle \prod_n1, & \quad \text{if $h$ is Steinhaus}, \\
\displaystyle \prod_n \frac 12(e^{t \lambda(n)a_n}+e^{-t \lambda(n)a_n}), & \quad \text{if $h$ is Rademacher}.
\end{cases}
\end{align*}
 In both cases, the inequality $|\frac 12(e^{x}+e^{-x})| \leq e^{|x|^2}$ yields that
\begin{align*}
%%\label{EPnsmall}
 \E (e^{t\sum h(n)\lambda(n)a_n})\leq  \exp \left( \frac {t^2}2\displaystyle \sum_n \lambda^2(n)|a_n|^2 \right).
\end{align*}

  We now apply \cite[Lemma 1.1, Chap. 3]{gut} to derive that for any $t>0, \mu>0$,
\begin{align*}
%%\label{EPnsmall}
 \p (|\sum_n h(n)\lambda(n)a_n| \geq \mu) \leq \exp \left( -t\mu+\frac {t^2}2\displaystyle \sum_n \lambda^2(n)|a_n|^2 \right).
\end{align*}
  We set $t=\mu/\sum_n \lambda^2(n)|a_n|^2$ in the inequality above to arrive at
\begin{align*}
%%\label{EPnsmall}
 \p (|\sum_n h(n)\lambda(n)a_n| \geq \mu) \leq  \exp \left( \frac {-\mu^2}{2\sum_n \lambda^2(n)|a_n|^2} \right).
\end{align*}
  Now observe that for $m>1$,
\begin{align*}
%%\label{EPnsmall}
 \E(|f|^m)=m\int \mu^{m-1}\p(|f| \geq \mu) \dif \mu.
\end{align*}
  We then deduce that
\begin{align*}
%%\label{EPnsmall}
 \E(|\sum_n h(n)\lambda(n)a_n|^m) \leq m\int \mu^{m-1} \exp \left(\frac {-\mu^2}{2\sum_n \lambda^2(n)|a_n|^2} \right) \dif \mu=2^{m/2-1}m\Gamma \Big(\frac m2 \Big) \Big(\sum_n \lambda^2(n)|a_n|^2 \Big)^{m/2},
\end{align*}
  where $\Gamma(x)$ is the Gamma function. \newline

   Next note that for $m, m'>1$ satisfying $1/m+1/m'=1$, we have
\begin{align*}
%%\label{EPnsmall}
\begin{split}
 \Big(\sum_n \lambda^2(n)|a_n|^2\Big)^{2}=\E(|\sum_n h(n)\lambda(n)a_n|^2) \leq & \E(|\sum_n h(n)\lambda(n)a_n|^m)^{1/m} \E(|\sum_n h(n)\lambda(n)a_n|^{m'})^{1/m'} \\
\ll & \Big(\sum_n \lambda^2(n)|a_n|^2\Big)^{1/2}\E(|\sum_n h(n)\lambda(n)a_n|^m)^{1/m}.
\end{split}
\end{align*}
The above renders valid the bound in \eqref{KTlower} and completes the proof of the lemma.
\end{proof}

\subsection{Some Probability Results}
\label{secprobcalc}

  In this section, we collect some Girsanov-type results on various probabilities concerning both Steinhaus and  Rademacher random multiplicative functions. Our treatments in this section largely follow those in \cite[Sections 3.2--3.3]{Harper20}. Let $x$ be large and $-1/100 \leq \sigma \leq 1/100$.  For $x \in \rear$, set $\lfloor x \rfloor = \max \{ n \in \intz : n \leq x \}$ and $\lceil x \rceil = \min \{ n \in \intz : n \geq x \}$.  Let $(l_j)_{j=1}^{n}$ be a strictly decreasing sequence of non-negative integers such that $l_1 \leq \log\log x - 2$, and $(x_j)_{j=1}^{n}$ a corresponding increasing sequence of real numbers such that $x_j := x^{e^{-(l_j + 1)}}$. \newline

  For a Steinhaus random multiplicative function $h(n)$, we define, as in \cite[Section 3.2]{Harper20}, a new probability measure $\tilde{\p} = \tilde{\p}_{f,x,\sigma}$  such that for each event $A$,
$$ \tilde{\p}(A) := \frac{\E \textbf{1}_{A} \prod_{p \leq x^{1/e}} \left|1 - \frac{\alpha_p h(p)}{p^{1/2+\sigma}}\right|^{-2}\left|1 - \frac{\beta_p h(p)}{p^{1/2+\sigma}}\right|^{-2}}{\E \prod_{p \leq x^{1/e}} \left|1 - \frac{\alpha_p h(p)}{p^{1/2+\sigma}}\right|^{-2}\left|1 - \frac{\beta_p h(p)}{p^{1/2+\sigma}} \right|^{-2}}.$$
We also denote by $\tilde{\E}$ the expectation with respect to the measure $\tilde{\p}$ such that for any random variable $F$,
$$ \tilde{\E}F := \frac{\E F \prod_{p \leq x^{1/e}} \left|1 - \frac{\alpha_p h(p)}{p^{1/2+\sigma}}\right|^{-2}\left|1 - \frac{\beta_p h(p)}{p^{1/2+\sigma}}\right|^{-2}}{\E \prod_{p \leq x^{1/e}} \left|1 - \frac{\alpha_p h(p)}{p^{1/2+\sigma}}\right|^{-2}\left|1 - \frac{\beta_p h(p)}{p^{1/2+\sigma}} \right|^{-2}}. $$

  Similarly, for each $t \in \mr$ and any Rademacher random multiplicative function $h(n)$, we define a new probability measure $\tilde{\p}_{t}^{\text{Rad}} = \tilde{\p}_{f,x,\sigma,t}^{\text{Rad}}$ as in \cite[Section 3.3]{Harper20} such that for each event $A$,
$$ \tilde{\p}_{t}^{\text{Rad}}(A) := \frac{\E \textbf{1}_{A} \prod_{p \leq x^{1/e}} \left|1 + \frac{\lambda(p)h(p)}{p^{1/2+\sigma+it}}\right|^{2}}{\E \prod_{p \leq x^{1/e}} \left|1 + \frac{\lambda(p)h(p)}{p^{1/2+\sigma+it}}\right|^{2}}. $$
 Analogous to $\tilde{\E}$, we write $\tilde{\E}_{t}^{\text{Rad}}$ for the expectation with respect to the measure $\tilde{\p}_{t}^{\text{Rad}}$. \newline

As pointed out in \cite[Section 3.2]{Harper20}, the exact choice of the range of $p$ in the definition of $\tilde{\p}$ or $\tilde{\p}_{t}^{\text{Rad}}$ is not too important, as the independence of the $h(p)$ means that if an event $A$ does not involve a particular prime, then the expectation of that term will cancel out. \newline

We further set, for each $l \in \N \cup \{0\}$,
\begin{align}
\label{Ildef}
I_l(s) := \begin{cases}
\displaystyle \prod_{x^{e^{-(l+2)}} < p \leq x^{e^{-(l+1)}}} \Big(1 - \frac{\alpha_ph(p)}{p^s} \Big)^{-1} \Big(1 - \frac{\beta_ph(p)}{p^s} \Big)^{-1}, & \quad \text{$h$ is a Steinhaus random multiplicative function}, \\
\displaystyle \prod_{x^{e^{-(l+2)}} < p \leq x^{e^{-(l+1)}}} \Big(1 + \frac{\lambda(p)h(p)}{p^s}\Big), & \quad \text{$h$ is a Rademacher random multiplicative function}.
\end{cases}
\end{align}

The following result for the Steinhaus case is analogous to \cite[Lemma 3]{Harper20} and is established in \cite[Lemma 2.12]{G&Wu25-12}.
\begin{lemma}
\label{lem3}
With the notation as above, let $h(n)$ be a Steinhaus random multiplicative function. Suppose that $x_1$ is sufficiently large and $|\sigma| \leq 1/\log x_n$ and that $(v_j)_{j=1}^{n}$ is any sequence of real numbers satisfying for any $1 \leq j \leq n$,
 $$ |v_j| \leq (1/40)\sqrt{\log x_j} + 2. $$

Then, for any sequence of real numbers $(t_j)_{j=1}^{n}$,
\begin{align*}
%%\label{lambdansquarenlargesimplified}
& \tilde{\p}(v_j \leq \log|I_{l_j}(1/2 + \sigma + it_j)| \leq v_j + 1/j^2, \; \forall \; 1 \leq j \leq n)  =  \left(1+O\left(\frac{1}{x_{1}^{1/100}}\right) \right) \p(v_j \leq N_j \leq v_j + 1/j^2, \; \forall \; 1 \leq j \leq n) ,
\end{align*}
where the $N_j$'s are independent Gaussian random variables with mean and variance, respectively, 
\[ \sum_{x_{j}^{1/e} < p \leq x_j} \frac{\lambda^2(p)\cos(t_j \log p)}{p^{1 + 2\sigma}} \quad \mbox{and} \quad \sum_{x_{j}^{1/e} < p \leq x_j} \frac{\lambda^2(p)}{2p^{1 + 2\sigma}}. \]
\end{lemma}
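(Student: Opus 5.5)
Lemma \ref{lem3} is the analogue of \cite[Lemma 3]{Harper20}, and I would follow Harper's proof of that result, which is a characteristic-function (Fourier inversion) comparison. Two structural observations come first. The blocks $I_{l_j}$ involve disjoint ranges of primes, and the $h(p)$ are independent. Moreover, the density defining $\tilde{\p}$ factorises over primes. Hence under $\tilde{\p}$ the random variables $\log|I_{l_j}(1/2+\sigma+it_j)|$, $1\le j\le n$, are independent. It therefore suffices to treat each $j$ separately and multiply the results. For a single $j$ I would compute the characteristic function
\[
\tilde{\E}\, e^{iu\log|I_{l_j}(1/2+\sigma+it_j)|}.
\]
This is the ratio of the expectation in \eqref{Eest} (with $x,y$ replaced by $x_j^{1/e},x_j$ and $t=t_j$) to the expectation in \eqref{Eprodsquare}; note that $|I_l|^{iu}$ is exactly the $|\cdot|^{-iu}$ factor in \eqref{Eest}. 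By Lemma \ref{eulerproduct}, for $|u|\le \sqrt{x_j^{1/e}}/20$ (so that $400(1+u^2)\le x_j^{1/e}$) this ratio equals
\[
\exp\Big(iu\sum_{x_j^{1/e}<p\le x_j}\frac{\lambda^2(p)\cos(t_j\log p)}{p^{1+2\sigma}}-\frac{u^2}{4}\sum_{x_j^{1/e}<p\le x_j}\frac{\lambda^2(p)}{p^{1+2\sigma}}+T(u)\Big).
\]
This is the characteristic function of $N_j$, with the stated mean and variance, multiplied by $e^{T(u)}$.

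Next I would convert this into a statement about probabilities of intervals. Following Harper, I would use a smoothed indicator: Beurling–Selberg majorants and minorants of $\mathbf 1_{[v_j,v_j+1/j^2]}$ whose Fourier transforms are supported in $|u|\le K$, with $K$ a small power of $x_j$, say $K=x_j^{1/20}$. Fourier inversion then expresses $\tilde{\p}(\cdot)$ and $\p(N_j\in\cdot)$ as integrals of the respective characteristic functions against the same kernel over $|u|\le K$. The difference is governed by $\int_{|u|\le K}|e^{T(u)}-1|\,|\hat\phi(u)|\,|\varphi_{N_j}(u)|\,du$. For $|u|\le 1$ the bound $T'(u)\ll 1/(\sqrt{x_j}\log x_j)$ gives $T(u)-T(0)\ll |u|/\sqrt{x_j}$; note also that $T(0)=0$, since at $u=0$ the ratio is $1$. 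For larger $|u|$ one uses $T(u)\ll (1+|u|^3)/\sqrt{x_j}$ together with the Gaussian decay $e^{-cu^2}$ of $\varphi_{N_j}$, whose variance is $\asymp 1$ by \eqref{merten1}.

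The remaining task is to turn this additive error into the relative error $1+O(x_1^{-1/100})$. This requires a lower bound for $\p(v_j\le N_j\le v_j+1/j^2)$. Since $N_j$ has bounded variance and mean $O(1)$, and $|v_j|\le \sqrt{\log x_j}/40+2$, this probability is at least about $j^{-2}e^{-(\log x_j)/1600-O(\sqrt{\log x_j})}$, which is $\ge x_j^{-1/1000}$. The additive errors are $O(x_j^{-1/10})$. The smoothing error, from the width of the Beurling–Selberg approximants, is of order $1/K$ relative to the interval length $1/j^2$; this is absorbed because $j\le \log\log x$. Multiplying over $j$ and using $x_j\ge x_1$ yields the factor $\prod_j(1+O(x_j^{-1/100}))=1+O(x_1^{-1/100})$, because the $x_j=x^{e^{-(l_j+1)}}$ grow doubly exponentially in $j$.

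The main obstacle is this last step: getting a \emph{multiplicative} rather than additive error. This is exactly where the constraint $|v_j|\le \sqrt{\log x_j}/40+2$ enters. I would follow Harper's choice of parameters, taking the Fourier cutoff a small power of $x_j$ and checking that the smoothing loss stays below the Gaussian interval probability. One minor point is that the mean includes the factor $\cos(t_j\log p)$, which may make it small, but this does not matter since only $O(1)$ bounds on the mean are used.
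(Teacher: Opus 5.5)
Your proposal is correct and follows essentially the argument behind this lemma. The paper does not prove it but imports it as the $\lambda^2(p)$-weighted version of \cite[Lemma 3]{Harper20}, which proceeds in the same steps: factor over $j$ by independence under $\tilde{\p}$; obtain the characteristic function of $\log|I_{l_j}(\tfrac12+\sigma+it_j)|$ as the ratio of \eqref{Eest} to \eqref{Eprodsquare}; compare it with that of $N_j$ through a Fourier smoothing inequality (Harper uses Berry--Esseen, and your Beurling--Selberg approximants do the same job); and convert the additive error into a relative one via the Gaussian lower bound that $|v_j|\le \sqrt{\log x_j}/40+2$ affords.

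Only cosmetic fixes are needed:
\begin{itemize}
\item The errors from Lemma \ref{eulerproduct} are governed by the lower endpoint $x_j^{1/e}$ of the prime range, so $T(u)-T(0)\ll (1+|u|^3)/(x_j^{1/(2e)}\log x_j)$, not a bound with $\sqrt{x_j}$.
\item The variance of $N_j$ can drop to about $0.15$ when $\sigma$ is near $1/\log x_n$. So the Gaussian interval probability is only $\gg j^{-2}x_j^{-1/480-o(1)}$ rather than $\ge x_j^{-1/1000}$, which is still ample against the additive error $O(x_j^{-1/20})$.
\item The factor $j^2$ is absorbed because $x_j\ge x_1^{e^{j-1}}$ gives $j\le 1+\log\log x_j$, not merely because $j\le\log\log x$.
\end{itemize}
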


Now Lemma \ref{lem3} gives rise to the following analogue of \cite[Lemma 4]{Harper20} which is proved in \cite[Lemma 2.13]{G&Wu25-12}.
\begin{lemma}
\label{lem4}
 With the notation as in Lemma \ref{lem3}, let $h(n)$ be a Steinhaus random multiplicative function. Suppose $(u_j)_{j=1}^{n}$ and $(v_j)_{j=1}^{n}$ are sequences of real numbers such that for any $1 \leq j \leq n$,
$$ -(1/80)\sqrt{\log x_j} \leq u_j \leq v_j \leq (1/80)\sqrt{\log x_j}. $$
Then we have
\begin{align*}
%%\label{lambdansquarenlargesimplified}
\p(u_j + 2 \leq \sum_{m=1}^{j} N_m \leq v_j - 2, \; \forall \; j \leq n) 
 \ll & \ \tilde{\p}(u_j \leq \sum_{m=1}^{j} \log|I_{l_m}(\tfrac{1}
{2} + \sigma + it_m)| \leq v_j, \; \forall \; j \leq n) \\
\ll & \ \p(u_j - 2 \leq \sum_{m=1}^{j} N_m \leq v_j + 2, \; \forall \; j \leq n),
\end{align*}
 where the Gaussian random variables $N_m$ are also as in Lemma \ref{lem3}. \newline

 Moreover, if the numbers $(t_j)_{j=1}^{n}$ satisfy $|t_j| \leq \frac{1}{j^{2/3} \log x_j}$,  then we have
\begin{align*}
 \p \Big((u_j - j) + O(1)  \leq \sum_{m=1}^{j} G_m \leq (v_j-j) - O(1), \; \forall \; j  & \leq n \Big)  \ll  \tilde{\p} \Big(u_j \leq \sum_{m=1}^{j} \log|I_{l_m}(1/2 + \sigma + it_m)| \leq v_j \; \forall \; j \leq n \Big) \\
& \ll  \p((u_j - j) - O(1) \leq \sum_{m=1}^{j} G_m \leq (v_j-j) + O(1), \; \forall \; j \leq n) , 
\end{align*}
where the $G_m$'s are independent Gaussian random variables, each having mean 0 and variance $\sum_{x_{m}^{1/e} < p \leq x_m} \frac{\lambda^2(p)}{2p^{1 + 2\sigma}}$.
\end{lemma}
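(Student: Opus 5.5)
The plan is to derive the first pair of inequalities from Lemma \ref{lem3} by discretization, following the proof of \cite[Lemma 4]{Harper20}. Write $L_m := \log|I_{l_m}(1/2+\sigma+it_m)|$. Cover the range of each $L_j$ by the grid intervals $[v_j, v_j+1/j^2]$ with $v_j \in j^{-2}\intz$. The event $\{u_j \le \sum_{m\le j} L_m \le v_j \ \forall j\le n\}$ is then covered by the disjoint union of boxes $\prod_j [w_j, w_j+1/j^2]$ satisfying $u_j - \sum_{m\le j} m^{-2} \le \sum_{m \le j} w_m \le v_j$. Conversely, it contains every box with $u_j \le \sum_{m\le j} w_m$ and $\sum_{m\le j} w_m + \sum_{m\le j} m^{-2}\le v_j$. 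Since $\sum_m m^{-2} < 2$, the admissible boxes in both cases are controlled by the same condition on the partial sums, up to a loss of $2$ on each side.

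For each box, I apply Lemma \ref{lem3} with $v_j = w_j$. This replaces the $\tilde{\p}$-probability of the box by $(1+O(x_1^{-1/100}))$ times the probability that $N_j \in [w_j, w_j+1/j^2]$ for all $j$. The hypothesis $|w_j| \le (1/40)\sqrt{\log x_j}+2$ of Lemma \ref{lem3} must be checked. It holds because the partial sums lie in $[-(1/80)\sqrt{\log x_j}-2,\ (1/80)\sqrt{\log x_j}]$. Each increment $w_j$ is a difference of two consecutive partial sums, so
\[
|w_j| \le (1/80)\bigl(\sqrt{\log x_j}+\sqrt{\log x_{j-1}}\bigr)+2 \le (1/40)\sqrt{\log x_j}+2,
\]
using that $x_j$ is increasing. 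Boxes outside this range do not meet the event. Summing over the countably many boxes, with the uniform factor $1+O(x_1^{-1/100})\asymp 1$, and then reassembling the Gaussian boxes gives both inequalities with shifts $\pm 2$. The lower bound uses the same reassembly in the opposite direction: the event $u_j+2\le \sum_{m\le j} N_m \le v_j-2$ is covered by boxes that lie entirely inside the $\tilde{\p}$-event.

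For the second part, I compare each mean $\mu_m := \E N_m$ with $1$. Using $\cos(t_m\log p) = 1+O(t_m^2\log^2 p)$ and $p^{-2\sigma} = 1+O(|\sigma|\log p)$ on $x_m^{1/e}<p\le x_m$, together with \eqref{merten1} and \eqref{lambdabound}, I get
\[
\mu_m = \sum_{x_m^{1/e}<p\le x_m}\frac{\lambda^2(p)}{p} + O\Big(t_m^2\log^2 x_m + \frac{\log x_m}{\log x_n}\Big) = 1 + O\Big(m^{-4/3} + e^{-(n-m)} + \frac{1}{\log x_m}\Big).
\]
Here the middle term uses $\log x_m/\log x_n = e^{l_n-l_m}\le e^{-(n-m)}$, which holds because the $l_j$ strictly decrease. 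The key point is that it is the quadratic error in $t_m$ that enters, so the errors are summable: $\sum_{m\le j}(\mu_m-1)=O(1)$ uniformly in $j$, the last term being summable since $\log x_m$ grows geometrically. A first-order estimate in $t_m$ would give $\sum_{m\le j} m^{-2/3}\asymp j^{1/3}$, which is not bounded, so this is the step I expect to need care. Writing $N_m = \mu_m + G_m$ with $G_m$ centered of the stated variance, I get $\sum_{m\le j}N_m = j+\sum_{m\le j}G_m+O(1)$. Substituting this into the first part yields the displayed bounds with $u_j-j$, $v_j-j$ and $O(1)$ shifts, where the constant $2$ is absorbed into the $O(1)$.
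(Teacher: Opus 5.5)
Your proposal is correct and takes essentially the same route as the argument the paper relies on. The paper gives no proof of its own here: it cites Gao--Wu's Lemma~2.13, the analogue of \cite[Lemma 4]{Harper20}. That argument discretizes into boxes of side $1/j^2$ and applies Lemma~\ref{lem3} box by box, losing $\sum_m m^{-2}<2$ on each side. It then uses $\cos(t_m\log p)=1+O(m^{-4/3})$ together with \eqref{merten1} to show that $\sum_{m\le j}\E N_m=j+O(1)$. This matches the estimate $\sum_{x_j^{1/e}<p\le x_j}\lambda^2(p)p^{-1-2\sigma}=1+O(1/\log x_j+\log x_j/\log x_n)$, which the paper later quotes from that proof.
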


  Now, using Lemmas \ref{lem3}--\ref{lem4} together with Probability Results 1 and 2 in \cite{Harper20}, we arrive at the following result analogue of \cite[Proposition 5]{Harper20}, which is established in  \cite[Proposition 2.14]{G&Wu25-12}.
\begin{prop}
\label{prop5}
With the notation as above, there exists a large natural number $B$ such that the following is valid. Let $(l_j)_{j=1}^{n}$ be a decreasing sequence of non-negative integers defined by $l_j := \lfloor \log\log x \rfloor - (B+1) - j$, where $n \leq \log\log x - (B+1)$ is large. Suppose that $|\sigma| \leq \frac{1}{e^{B+n+1}}$, and that $(t_j)_{j=1}^{n}$ is a sequence of real numbers satisfying $|t_j| \leq \frac{1}{j^{2/3} e^{B+j+1}}$ for all $j$. \newline

For any Steinhaus random multiplicative function $h$, let $I_{l}(s)$ be defined as in \eqref{Ildef}. Then uniformly for any large $a$ and function $g(j)$ satisfying $|g(j)| \leq 10\log j$,
$$ \tilde{\p}(-a -Bj \leq \sum_{m=1}^{j} \log|I_{l_m}(1/2 + \sigma + it_m)| \leq a + j + g(j), \; \forall \; j \leq n) \asymp \min\{1,\frac{a}{\sqrt{n}}\} . $$
\end{prop}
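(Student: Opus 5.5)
The plan is to follow the proof of \cite[Proposition 5]{Harper20}: the probability under $\tilde{\p}$ is transferred to a statement about a Gaussian random walk by Lemma \ref{lem4}, and the resulting ballot-type probability is then estimated with Probability Results 1 and 2 of \cite{Harper20}. First I check that Lemma \ref{lem4} applies. With $l_j = \lfloor \log\log x\rfloor-(B+1)-j$ we have $x_j = x^{e^{-(l_j+1)}}$, so $\log x_j \asymp e^{B+j}$, uniformly up to absolute constants. The hypotheses $|\sigma|\le e^{-(B+n+1)}\le 1/\log x_n$ and $|t_j|\le \frac{1}{j^{2/3}e^{B+j+1}} \ll \frac{1}{j^{2/3}\log x_j}$ then hold, after adjusting $B$ so that the constants line up. The window bounds $u_j=-a-Bj$ and $v_j=a+j+g(j)$ must satisfy $|u_j|,|v_j|\le (1/80)\sqrt{\log x_j}\asymp e^{(B+j)/2}$. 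This holds for all $j$ once $B$ is large, provided $a$ is not huge. If $a$ is so large that it fails for small $j$, I would truncate: replace $a$ by $\min\{a, \sqrt{n}\}$ in the lower bound (the event is monotone in $a$), and bound the probability by $1$ in the upper bound. Since $a/\sqrt n\ge1$ in that regime, nothing is lost.

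Next I apply the second part of Lemma \ref{lem4}. This reduces the quantity, up to constants, to
\[
\p\Big(-a-(B+1)j \pm O(1) \le \sum_{m\le j} G_m \le a+g(j)\pm O(1)\ \ \forall j\le n\Big),
\]
where the $G_m$ are independent centered Gaussians. By Lemma \ref{RS}, \eqref{merten1}, their variances are
\[
\sum_{x_m^{1/e}<p\le x_m}\frac{\lambda^2(p)}{2p^{1+2\sigma}} = \tfrac12\log(\log x_m/\log x_m^{1/e}) + O(e^{-(B+m)}) = \tfrac12 + O(e^{-B-m}),
\]
where the $\sigma$ dependence is harmless because $|\sigma|\log x_m\ll 1$. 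So the variances are bounded above and below and the sum of the errors is $O(1)$. This is exactly the input needed for the Gaussian random walk results of Harper.

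For the upper bound I drop the lower barrier, which only enlarges the event. Probability Result 1 of \cite{Harper20} then says that a Gaussian walk with variances $\asymp 1$ stays below $a+g(j)+O(1)$ for all $j\le n$ with probability $\ll \min\{1,a/\sqrt n\}$, using $|g(j)|\le 10\log j$. For the lower bound I use Probability Result 2 of \cite{Harper20}. It gives $\gg \min\{1,a/\sqrt n\}$ for the walk staying below $a+g(j)-O(1)$ and above $-a-Bj+O(1)$; the linearly decreasing lower barrier with slope $B$ large is what makes the two-sided event comparable to the one-sided one.

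The main obstacle is the uniformity bookkeeping: the $O(1)$ shifts in Lemma \ref{lem4}, the $-j$ drift that turns the barriers into $a+g(j)$ and $-a-(B+1)j$, and the small-$j$ range where $a$ may exceed $\sqrt{\log x_j}$. These must be absorbed into $a$ (which is large) and into $g$ without exceeding $10\log j$, possibly by using a slightly larger constant in front of $\log j$, which Harper's results allow. Everything else follows verbatim from \cite[Section 3]{Harper20}, with $\lambda^2(p)$ weights handled by \eqref{merten1}.
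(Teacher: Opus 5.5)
Your route is the one the paper takes (it only cites \cite[Proposition 2.14]{G&Wu25-12}, which follows \cite[Proposition 5]{Harper20}): check the hypotheses of Lemma \ref{lem4}, pass via its second part to a centred Gaussian walk with barriers $-a-(B+1)j+O(1)$ and $a+g(j)\pm O(1)$, and finish with Probability Results 1 and 2 of \cite{Harper20}. Your checks on $\sigma$, $t_j$ and the variances are fine. The variance error is really $O(e^{-B-m}+e^{m-n})$, since $|\sigma|\log x_m\le e^{m-n}$, but the variances still lie in $[1/20,20]$.

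The gap is your treatment of large $a$. The window condition of Lemma \ref{lem4} needs $a+Bj\le\frac1{80}\sqrt{\log x_j}$. Since $\sqrt{\log x_j}<e^{(B+j+1)/2}$, this already fails at $j=1$ once $a$ exceeds a fixed constant of order $e^{B/2}$, and it fails for all $j\lesssim 2\log a-B$. That threshold does not depend on $n$, so the bad regime is not $a\ge\sqrt n$, and your sentence ``since $a/\sqrt n\ge1$ in that regime'' is false. For every $a$ with $e^{B/2}\lesssim a<\sqrt n$ three things go wrong:
\begin{itemize}
\item replacing $a$ by $\min\{a,\sqrt n\}$ changes nothing;
\item the trivial bound $1$ is not $\ll a/\sqrt n$;
\item Lemma \ref{lem4} cannot be invoked.
\end{itemize}
This is exactly the regime used in Section \ref{secmainlower1}, where $a\approx\min\{\sqrt{\log\log x},1/(1-q)\}$ and $n\approx\log\log x$. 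The defect also cannot be absorbed into $a$ or into $g$: at small $j$ the window is too wide by an amount of order $a$, not $O(\log j)$. As written, your argument proves the proposition only for $a\ll e^{B/2}$.

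The missing step is to treat the initial segment $j<j_0$ separately. Choose $j_0\ll 1+\log a$ so that $2a+2(B+1)j+10\log j\le\frac1{80}e^{(B+j)/2}$ for all $j\ge j_0$. If $j_0>n/2$, then $a\ge\sqrt n$ and the upper bound is trivial.

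\textbf{Upper bound.} Discard the constraints with $j<j_0$. Since $\tilde{\p}$ is a product measure over primes, $S_{j_0}:=\sum_{m\le j_0}\log|I_{l_m}(\tfrac12+\sigma+it_m)|$ is independent of the later blocks. Condition on it, and apply Lemma \ref{lem4} to the re-based sums $\sum_{j_0<m\le j}$, with windows shifted by $S_{j_0}\in[-a-Bj_0,\,a+j_0+g(j_0)]$. These windows are now admissible, and Probability Result 1 gives $\ll\min\{1,(2a+(B+1)j_0+O(\log j_0))/\sqrt{n-j_0}\}\ll\min\{1,a/\sqrt n\}$.

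\textbf{Lower bound.} Intersect the windows with $[-\frac1{80}\sqrt{\log x_j},\frac1{80}\sqrt{\log x_j}]$; this gives a subevent to which Lemma \ref{lem4} applies. On the Gaussian side, the capped parts of the barriers sit at height $\gg e^{(B+j)/2}$, while $\sum_{m\le j}G_m$ fluctuates on scale $\sqrt j$. A first-passage argument at the first violating $j<j_0$ handles this, using the Markov property together with Probability Result 1 for the rest of the walk. The walk there starts within distance $O(a+Bj)$ of the upper barrier, because the lower barrier is still enforced. This shows the capping changes the probability by only $O(e^{-ce^{B}})\min\{1,a/\sqrt n\}$, which is harmless for $B$ large.
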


We now consider a variant of \eqref{Eest}. Let $h$ be a Steinhaus random multiplicative function. We define for each $t \in \mr$, large real $x$ and $-1/100 \leq \sigma \leq 1/100$, a probability measure $\tilde{\p}_{t}^{\text{St}, (2)} = \tilde{\p}_{x,\sigma,t}^{\text{St}, (2)}$ such that for every event $A$,
$$ \tilde{\p}_{t}^{\text{St}, (2)}(A) := \frac{\E \textbf{1}_{A} \prod_{p \leq x^{1/e}} \left|1 - \frac{\alpha_ph(p)}{p^{1/2+\sigma}}\right|^{-2} \left|1 - \frac{\beta_ph(p)}{p^{1/2+\sigma}}\right|^{-2}\left|1 - \frac{\alpha_ph(p)}{p^{1/2+\sigma + it}}\right|^{-2}\left|1 - \frac{\beta_ph(p)}{p^{1/2+\sigma + it}}\right|^{-2}}{\E \prod_{p \leq x^{1/e}} \left|1 - \frac{\alpha_ph(p)}{p^{1/2+\sigma}}\right|^{-2} \left|1 - \frac{\beta_ph(p)}{p^{1/2+\sigma}}\right|^{-2}\left|1 - \frac{\alpha_ph(p)}{p^{1/2+\sigma + it}}\right|^{-2}\left|1 - \frac{\beta_ph(p)}{p^{1/2+\sigma + it}}\right|^{-2}}. $$
We also denote by $\tilde{\E}_{t}^{\text{St}, (2)}$ the expectation with respect to this measure. Here again the exact range of $p$ in the definition of the measure $\tilde{\p}_{t}^{\text{St}, (2)}$ does not matter too much. \newline

Using \eqref{Eest2} and a two dimensional version of the Berry--Esseen inequality, we obtain the following two dimensional analogue of in Lemma \ref{lem3}.
\begin{lemma}
\label{lem7}
With the notation as above. Suppose that $|t| \leq 1$. Suppose that $x_1 \geq e^{C/|t|^2}$ is sufficiently large, and that $|\sigma| \leq 1/\log x_n$. Suppose further that $(u_j)_{j=1}^{n}$ and $(v_j)_{j=1}^{n}$ are any sequences of real numbers satisfying
$$ |u_j| , |v_j| \leq (1/40)(\log x_j)^{1/4} + 2 \;\;\; \forall 1 \leq j \leq n . $$

Then we have
\begin{align}
\label{doubleprob}
\begin{split}
\tilde{\p}_{t}^{\text{St}, (2)} & (u_j \leq \log|I_{l_j}(\tfrac{1}{2} + \sigma)| \leq u_j + \tfrac{1}{j^2} , \;\;\; v_j \leq \log|I_{l_j}(\tfrac{1}{2} + \sigma + it)| \leq v_j + \tfrac{1}{j^2} ,  \; \forall \; j \leq n )  \\
 = & \left(1+O\left(\frac{1}{x_{1}^{1/100}}\right) \right) \p(u_j \leq N_j^{1} \leq u_j + \tfrac{1}{j^2} , \; \text{and} \; v_j \leq N_{j}^{2} \leq v_j + \tfrac{1}{j^2} , \; \forall \; 1 \leq j \leq n) ,
\end{split}
\end{align}
where $(N_j^{1}, N_j^{2})_{j=1}^{n}$ is a sequence of independent bivariate Gaussian random vectors, and the components $N_j^{1}, N_j^{2}$ have mean $\sum_{x_{j}^{1/e} < p \leq x_{j}} \frac{\lambda^2(p)(1 + \cos(t\log p))}{p^{1 + 2\sigma}}$, variance $\sum_{x_{j}^{1/e} < p \leq x_j} \frac{\lambda^2(p)}{2p^{1 + 2\sigma}}$, and covariance $\sum_{x_{j}^{1/e} < p \leq x_{j}} \frac{\lambda^2(p)\cos(t\log p)}{2p^{1 + 2\sigma}}$. \newline

Alternatively, we have
\begin{align*}
\tilde{\p}_{t}^{\text{St}, (2)} & (u_j \leq \log|I_{l_j}(\tfrac{1}{2} + \sigma)| \leq u_j + \tfrac{1}{j^2} , \;\;\; v_j \leq \log|I_{l_j}(\tfrac{1}{2} + \sigma + it)| \leq v_j + \tfrac{1}{j^2} ,  \; \forall \; j \leq n)  \\
& =  \left(1+O\left(\frac{1}{\sqrt{C}}\right) \right) \p(u_j \leq N_j^{1} \leq u_j + \tfrac{1}{j^2} \; \forall \; j \leq n) \cdot \p(v_j \leq N_{j}^{2} \leq v_j + \tfrac{1}{j^2}, \; \forall \; j \leq n).
\end{align*}
This means that we may replace the covariance of $N_j^{1}$, $N_j^{2}$ by zero.
\end{lemma}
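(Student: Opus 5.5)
The plan is to follow the proof of \cite[Lemma 7]{Harper20}, and of \cite[Lemma 2.12]{G\&Wu25-12} for the one‑dimensional case Lemma \ref{lem3}, using \eqref{Eest2} in place of the corresponding Steinhaus computation there.

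First I reduce to the characteristic function of the random vector
\[
\Big(\log|I_{l_j}(\tfrac12+\sigma)|,\ \log|I_{l_j}(\tfrac12+\sigma+it)|\Big)_{j\le n}
\]
under $\tilde{\p}_{t}^{\text{St},(2)}$. Since the prime ranges $(x_j^{1/e},x_j]$ are disjoint and the $h(p)$ are independent, this measure factorizes over the blocks. Hence the vector has independent components across $j$, and it suffices to treat one $j$.

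For a single block I compute
\[
\tilde{\E}_{t}^{\text{St},(2)} \exp\Big(iu\log|I_{l_j}(\tfrac12+\sigma)| + iv\log|I_{l_j}(\tfrac12+\sigma+it)|\Big).
\]
This equals the ratio of the left side of \eqref{Eest2}, with exponents $-(2+iu)$ and $-(2+iv)$ (up to the sign convention $u\mapsto -u$), to the same expression at $u=v=0$. By \eqref{Eest2}, this ratio is
\[
\exp\Big(i u\mu_j + i v\mu_j - \tfrac12\big(u^2 V_j + v^2 V_j + 2uv C_j\big) + T(u,v)-T(0,0)\Big),
\]
where $\mu_j$, $V_j$ and $C_j$ are the stated mean, variance and covariance. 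Indeed, expanding $(1+iu/2)^2=1+iu-u^2/4$ and $(2+iu)(2+iv)/2=2+i(u+v)-uv/2$ produces exactly the linear terms $\sum\lambda^2(p)(1+\cos(t\log p))/p^{1+2\sigma}$, the quadratic terms $\sum\lambda^2(p)/(2p^{1+2\sigma})$, and the cross term $\sum\lambda^2(p)\cos(t\log p)/(2p^{1+2\sigma})$. This identity holds for $|u|,|v|\ll x_j^{1/2}$ (the constraint $400(1+u^2+v^2)\le x$), and the error satisfies $T\ll (1+|u|^3+|v|^3)/(\sqrt{x_j^{1/e}}\log x_j)$.

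Next I apply a two‑dimensional Berry--Esseen / smoothing inequality, as in Harper's proof, comparing the distribution with the bivariate Gaussian $(N_j^1,N_j^2)$ on boxes of side $1/j^2$. The window constraint $|u_j|,|v_j|\le (1/40)(\log x_j)^{1/4}+2$ keeps us in the bulk, because $V_j\asymp 1$ by \eqref{merten1}. The Gaussian box probability is then at least $\exp(-O(\sqrt{\log x_j}))/j^4$. The characteristic‑function error, integrated over $|u|,|v|\le x_j^{1/10}$, is $O(x_j^{-1/10})$, and the truncation error is of the same size, so the relative error is $1+O(x_j^{-1/100})$. Multiplying over $j$ and using $\sum_j x_j^{-1/100}\ll x_1^{-1/100}$ (the $x_j$ grow doubly exponentially) gives \eqref{doubleprob}.

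For the second statement, I would show $C_j$ is small relative to $V_j$. By partial summation with \eqref{merten2} and \eqref{merten2-0}, $\sum_{x_j^{1/e}<p\le x_j}\lambda^2(p)\cos(t\log p)/p^{1+2\sigma}$ equals the same sum with $\lambda^2(p)$ replaced by $1$, up to an error of $\exp(-b\sqrt{\log x_j^{1/e}})$, and the latter is $\ll 1/(|t|\log x_j)$. Then $C_j\ll 1/(|t|\log x_j)$. Since $x_1\ge e^{C/|t|^2}$ and the $\log x_j$ grow geometrically, $\sum_j C_j \ll 1/\sqrt{C}$ after summing over $j$. Replacing the bivariate Gaussian density on each box by the product of the marginal densities changes it by a factor
\[
\exp\Big(O\big(C_j(1+|u_j|)(1+|v_j|)\big)\Big).
\]
This is where the main obstacle lies: with $|u_j|,|v_j|$ up to $(\log x_j)^{1/4}$, the product $C_j|u_j||v_j|\ll (\log x_j)^{1/2}/(|t|\log x_j)$ must still be summable and $O(1/\sqrt{C})$. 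That holds because $|t|\sqrt{\log x_j}\ge \sqrt{C}\,e^{(j-1)/2}$ roughly, and this is precisely why the $(\log x_j)^{1/4}$ window is imposed rather than $(\log x_j)^{1/2}$. I expect the careful two‑dimensional Berry--Esseen bookkeeping, together with this covariance decoupling, to be the delicate parts; the rest follows Harper verbatim.
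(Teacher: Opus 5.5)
Your proposal is correct and follows essentially the same route as the paper: factorize over $j$ by independence under the tilted measure, read off the characteristic function from \eqref{Eest2}, apply Sadikova's two-dimensional Berry--Esseen inequality, and bound the covariance by $\ll 1/(|t|\log x_j)$ via the prime number theorem for $\lambda^2(p)$. For the $\Delta(u,v)/(uv)$ term the paper also records the mixed-difference bound on $T(u,v)$ in \eqref{Tdiffest}. Your explicit density-ratio estimate gives a per-block factor $\exp\bigl(O(1/(|t|\sqrt{\log x_j}))\bigr)=\exp\bigl(O(1/\sqrt{Ce^{j}})\bigr)$, which correctly explains both the $(\log x_j)^{1/4}$ window and the hypothesis $x_1\ge e^{C/|t|^2}$; the paper leaves this step to Harper's argument.
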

\begin{proof}
 Our proof is a modification of that for \cite[Lemma 7]{Harper20}. As observed there, the probabilities on both sides of \eqref{doubleprob} factor as products over $j$ by independence.  Hence it suffices to show that for all $1 \leq j \leq n$,
\begin{align*}
\tilde{\p}_{t}^{\text{St}, (2)} & (u_j \leq \log|I_{l_j}(\tfrac{1}{2} + \sigma)| \leq u_j + \tfrac{1}{j^2} , \; \text{and} \; v_j \leq \log|I_{l_j}(\tfrac{1}{2} + \sigma + it)| \leq v_j + \tfrac{1}{j^2}) \\
& = \left(1+O\left(\frac{1}{x_{j}^{1/100}}\right) \right) \p(u_j \leq N_j^{1} \leq u_j + \tfrac{1}{j^2} , \; \text{and} \; v_j \leq N_{j}^{2} \leq v_j + \tfrac{1}{j^2}) \\
& = \left(1+O\left(\frac{1}{\sqrt{C e^j}}\right) \right) \p(u_j \leq N_j^{1} \leq u_j + \tfrac{1}{j^2}) \cdot \p(v_j \leq N_{j}^{2} \leq v_j + \tfrac{1}{j^2}).
\end{align*}
 By \eqref{Eest2}, the characteristic function $\tilde{\E}_{t}^{\text{St}, (2)} e^{iu\log|I_{l_j}(1/2 + \sigma)| + iv\log|I_{l_j}(1/2 + \sigma + it)|}$ is
\begin{align*}
%%\label{doubleprob}
\begin{split}
 & \exp\Big ( \sum_{x_{j}^{1/e} < p \leq x_j} \frac{\lambda^2(p)(iu - \frac{u^{2}}{4} + iv - \frac{v^{2}}{4})}{p^{1 + 2\sigma}} + \sum_{x_{j}^{1/e} < p \leq x_{j}} \frac{\lambda^2(p)(2iu + 2iv - uv)\cos(t\log p)}{2p^{1+2\sigma}} + T(u,v) - T(0,0) \Big ) \\
 =&  \exp\Big ( iuN_{j}^{1}+ivN_{j}^{2} + T(u,v) - T(0,0) \Big ),
\end{split}
\end{align*}
  where standard calculation shows that $N_{j}^{1}$, $N_{j}^{2}$ can be taken to be Gaussian random variables each having mean $\sum_{x_{j}^{1/e} < p \leq x_{j}} \frac{\lambda^2(p)(1 + \cos(t\log p))}{p^{1 + 2\sigma}}$, variance $\sum_{x_{j}^{1/e} < p \leq x_{j}} \frac{\lambda^2(p)}{2p^{1 + 2\sigma}}$, and covariance $\sum_{x_{j}^{1/e} < p \leq x_{j}} \frac{\lambda^2(p)\cos(t\log p)}{2p^{1 + 2\sigma}}$. \newline

  We deduce from the proof of \cite[Lemma 2.13]{G&Wu25-12} by setting $t_m=0$ there that
\begin{align}
\label{sumlambdasquareoverp}
\begin{split}
 \sum_{x_{j}^{1/e} < p \leq x_j} \frac{\lambda^2(p)}{p^{1+2\sigma}} = 1 + O\Big( \frac{1}{\log x_j} + \frac{\log x_j}{\log x_n} \Big).
\end{split}
\end{align}
  The above implies that we have in our case,
\begin{align}
\label{sumlambdasquareoverp1}
\begin{split}
 \sum_{x_{j}^{1/e} < p \leq x_j} \frac{\lambda^2(p)}{p^{1+2\sigma}} = 1 + O(C^{-1} e^{-j} + e^{-(n-j)}).
\end{split}
\end{align}

By \eqref{merten2-0} and \eqref{merten2}, we have for some constant $b_4>0$ and $x \geq 2$,
\begin{align}
\label{merten3}
\begin{split}
\sum_{p\le x} \lambda^2(p)\log p =&  x+O\Big(x \exp(-b_4\sqrt{\log x})\Big).
\end{split}
\end{align}

The above, by following the arguments in the proof of \cite[Theorem 6.9]{MVa1}, leads to,
\begin{align}
\label{merten4}
\begin{split}
 \pi_f(x):=\sum_{p\le x} \lambda^2(p)=&  \int\limits^x_2\frac {\dif u}{\log u}+O\Big(x \exp(-b_4\sqrt{\log x})\Big), \quad \mbox{for} \quad x \geq 2.
\end{split}
\end{align}
Now \eqref{merten4} and partial summation similar to those carried out in \cite[Section 6.1]{harpergp} yield that under our conditions $|t| \leq 1$, $x_j \geq x_1 \geq e^{C/|t|^2}$ and $|\sigma| \leq 1/\log x_n$, 
\begin{align}
\label{sumlambdacos}
\begin{split}
 \sum_{x_{j}^{1/e} < p \leq x_{j}} \frac{\lambda^2(p)\cos(t\log p)}{p^{1 + 2\sigma}} \ll \frac{1}{|t| \log x_j} \ll \frac{1}{e^j |t| \log x_1} \ll \frac{1}{C e^j}.
\end{split}
\end{align}

 Moreover, we deduce from \eqref{Eest2} that we have
\begin{align}
\label{Tdiffest}
\begin{split}
 |T(u,v) - T(0,0)| \ll & \frac{1 + |u|^3 + |v|^3}{\sqrt{x_{j}^{1/e}} \log x_j},  \; |T(u,0) - T(0,0)| \ll  \frac{|u| + |u|^3}{\sqrt{x_{j}^{1/e}} \log x_j}, \; |T(0,v) - T(0,0)| \ll \frac{|v| + |v|^3}{\sqrt{x_{j}^{1/e}} \log x_j}, \\
 |T(u,v) - T(u,0) & - T(0,v) + T(0,0)| = \left| \int\limits_{0}^{u} \int\limits_{0}^{v} \frac{\partial T(u,v)}{\partial u \partial v} \dif u \dif v \right| \ll \frac{|u| |v| (1 + |u| + |v|)}{\sqrt{x_{j}^{1/e}} \log x_j} .
\end{split}
\end{align}

Now by the two dimensional Berry--Esseen inequality (see \cite{Sadikova}), we have
\begin{align*}
& \Biggl| \tilde{\p}_{t}^{\text{St}, (2)}(u_j \leq \log|I_{l_j}(\tfrac{1}{2} + \sigma)| \leq u_j + \tfrac{1}{j^2} , \; \text{and} \; v_j \leq \log|I_{l_j}(\tfrac{1}{2} + \sigma + it)| \leq v_j + \tfrac{1}{j^2})  \\
& \hspace*{2cm} - \p(u_j \leq N_{j}^{1} \leq u_j + \tfrac{1}{j^2} , \; \text{and} \; v_j \leq N_{j}^{2} \leq v_j + \tfrac{1}{j^2}) \Biggr| \\
& \ll  \int\limits_{-x_{j}^{1/50}}^{x_{j}^{1/50}} \int\limits_{-x_{j}^{1/50}}^{x_{j}^{1/50}} \Biggl|\frac{\Delta(u,v)}{uv}\Biggr| \dif u \dif v + \int\limits_{-x_{j}^{1/50}}^{x_{j}^{1/50}} \Biggl|\frac{\tilde{\E}_{t}^{\text{St}, (2)} e^{iu\log|I_{l_j}(1/2 + \sigma)|} - \E e^{iu N_{j}^{1}} }{u}\Biggr| \dif u  \\
& \hspace*{2cm} + \int\limits_{-x_{j}^{1/50}}^{x_{j}^{1/50}} \Biggl|\frac{\tilde{\E}_{t}^{\text{St}, (2)} e^{iv\log|I_{l_j}(1/2 + \sigma + it)|} - \E e^{iv N_{j}^{2}} }{v}\Biggr| \dif v + \frac{1}{x_{j}^{1/50}} ,
\end{align*}
where
\begin{align*}
\Delta(u,v)  := & \tilde{\E}_{t}^{\text{St}, (2)} e^{iu\log|I_{l_j}(1/2 + \sigma)| + iv\log|I_{l_j}(1/2 + \sigma + it)|} - \E e^{iuN_{j}^{1} + ivN_{j}^{2}} \nonumber \\
& \hspace*{2cm} - \tilde{\E}_{t}^{\text{St}, (2)} e^{iu\log|I_{l_j}(1/2 + \sigma)|} \tilde{\E}_{t}^{\text{St}, (2)} e^{iv\log|I_{l_j}(1/2 + \sigma + it)|} + \E e^{iuN_{j}^{1}} \E e^{ivN_{j}^{2}} . \nonumber
\end{align*}
 We then proceed as in the proof of \cite[Lemma 7]{Harper20} upon making use of the estimates in \eqref{sumlambdasquareoverp1}, \eqref{sumlambdacos} and \eqref{Tdiffest} to see that the assertions of the lemma are valid. This completes the proof.
\end{proof}

 The above lemma now allows us to obtain the following analogue of \cite[Proposition 7]{Harper20}.
\begin{prop}
\label{prop7}
With the notation as above, there is a large natural number $B$ so that the following is valid.  Let $t \in \mr$ satisfy $|t| \leq 1$ and $D \geq 2\log(1/|t|) + (B+1)$ be any natural number.  Suppose $n \leq \log\log x - D$ be large, and define the decreasing sequence $(l_j)_{j=1}^{n}$ of non-negative integers by $l_j := \lfloor \log\log x \rfloor - D - j$. Suppose also that $|\sigma| \leq 1/e^{D+n}$.  Then uniformly for any large $a$ and any function $H(n)$ satisfying $|H(n)| \leq 10\log n$, and with $I_{l}(s)$ being defined as in \eqref{Ildef} corresponding to a Steinhaus random multiplicative function, we have
\[ \tilde{\p}_{t}^{\text{St}, (2)}(-a -Bj \leq \sum_{m=1}^{j} \log|I_{l_m}(\tfrac{1}{2} + \sigma)| , \sum_{m=1}^{j} \log|I_{l_m}(\tfrac{1}{2} + \sigma + it)| \leq a + j + H(j), \; \forall \; j \leq n) \asymp \min\left\{1,\frac{a}{\sqrt{n}}\right\}^2 . \]
\end{prop}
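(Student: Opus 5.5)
We follow the proof of \cite[Proposition 7]{Harper20}. The plan is to use Lemma \ref{lem7} to replace the two-dimensional random walk by two \emph{independent} Gaussian random walks, and then apply the one-dimensional ballot-type estimates (Probability Results 1 and 2 of \cite{Harper20}) to each walk separately.

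First we check the hypotheses of Lemma \ref{lem7}. With $l_j = \lfloor \log\log x\rfloor - D - j$ we have $x_j = x^{e^{-(l_j+1)}}$, so $\log x_j \asymp e^{D+j}$. Hence $\log x_1 \gg e^{D} \geq e^{B+1}|t|^{-2}$, because $D \geq 2\log(1/|t|) + B+1$. This gives $x_1 \geq e^{C/|t|^2}$ with $C \asymp e^{B}$, which is large once $B$ is large. Also $|\sigma| \leq e^{-(D+n)} \ll 1/\log x_n$. By \eqref{sumlambdasquareoverp1} each $N_j^{i}$ has variance $\tfrac12 + O(C^{-1}e^{-j} + e^{-(n-j)})$. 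Its mean is $\sum \lambda^2(p)(1+\cos(t\log p))/p^{1+2\sigma} = 1 + O(C^{-1}e^{-j} + e^{-(n-j)})$, using \eqref{sumlambdacos}. So each coordinate is, up to summable errors, a random walk with drift $1$ per step and step variance $\tfrac12$. This is exactly the situation of Lemma \ref{lem4}: after subtracting $j$ we obtain centred walks $\sum_{m\le j} G^{i}_m$, and the window becomes $[-a-(B+1)j, \, a+H(j)]$ up to $O(1)$ errors.

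The main step is passing from events on partial sums to the box events of Lemma \ref{lem7}. We partition the range of each $\log|I_{l_j}|$ into intervals of length $1/j^2$. Since the quantities $\log |I_{l_j}|$ are independent across $j$ under $\tilde{\p}_{t}^{\text{St},(2)}$ (the measure factors over primes), the probability of the partial-sum event is a sum over cells of products of box probabilities. Lemma \ref{lem7} applies directly on the cells where every $|u_j|,|v_j| \le (1/40)(\log x_j)^{1/4}+2$. The remaining tail region, where some increment is larger than $(\log x_j)^{1/4}/40$, must be bounded separately. For this we use a Chernoff bound, computing the exponential moment from \eqref{Eest2} with real exponents as in \eqref{sttwoprods}, which gives $\ll \exp(-c(\log x_j)^{1/2})$. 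Summed over $j$, this is negligible compared with $\min\{1,a/\sqrt n\}^2 \gg 1/n$ when $B$ is large.

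On the good cells we use the second form of Lemma \ref{lem7}, with error factor $1+O(1/\sqrt{C e^j})$ per step. The product $\prod_j (1+O((Ce^j)^{-1/2}))$ is $\asymp 1$. Summing back over cells, at the cost of shifting the boundaries by $\sum_m 1/m^2 = O(1)$, bounds the probability above and below by
\[ \p\Big(\text{walk } \textstyle\sum_{m\le j} G^1_m \text{ stays in window}\Big)\cdot \p\Big(\text{walk } \textstyle\sum_{m\le j} G^2_m \text{ stays in window}\Big), \]
with windows $[-a-(B+1)j \pm O(1),\, a+H(j)\pm O(1)]$. Each factor is $\asymp \min\{1,a/\sqrt n\}$ by Probability Results 1 and 2 of \cite{Harper20}, exactly as in the proof of Proposition \ref{prop5}, since $|H(j)|\le 10\log j$ and the lower barrier recedes linearly. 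Multiplying the two factors gives $\min\{1,a/\sqrt n\}^2$.

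The hard part is making the error control uniform. The factor $1+O((Ce^j)^{-1/2})$ is multiplicative only relative to the independent product, so when we sum it over cells we must make sure the errors do not swamp the main term for small $a/\sqrt n$. I expect this to go through by choosing $B$, and hence $C$, large, as in \cite{Harper20}.
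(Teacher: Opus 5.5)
Your overall strategy is the paper's, which simply defers to \cite[Proposition 7]{Harper20}. You compare with two independent Gaussian walks via the decoupled form of Lemma \ref{lem7}, sum over cells of side $1/j^2$, and apply Harper's Probability Results 1 and 2 coordinatewise. Your checks of the hypotheses are fine: $\log x_1\ge e^{D}\ge e^{B+1}/|t|^2$, means $1+O(C^{-1}e^{-j}+e^{-(n-j)})$, and variances $\tfrac12+O(\cdot)$.

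The step where you discard cells outside the range $|u_j|,|v_j|\le\frac{1}{40}(\log x_j)^{1/4}+2$ of Lemma \ref{lem7} does not work as written. Since $\log x_j\asymp e^{D+j}$, your Chernoff bound totals
$\sum_j\exp(-c(\log x_j)^{1/2})\asymp\exp(-c\,e^{D/2})$. This is independent of $n$, and when $|t|\asymp1$ it depends only on the fixed constant $B$. The main term $\min\{1,a/\sqrt n\}^2$ can be as small as $a^2/n$, with $n$ up to $\log\log x-D$. So for $n>\exp(c'e^{D/2})$ this additive error swamps the main term, and enlarging $B$ cannot help uniformly in $n$.

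Your lower bound has the same defect, because discarding out-of-range cells also discards the corresponding Gaussian mass. These cells genuinely occur. You must allow $a$ up to $\sqrt n$, and once $a\gtrsim e^{D/4}$ the window $[-a-Bj,\,a+j+H(j)]$ permits out-of-range increments for small $j$.

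The missing idea is to localize the bad steps and control them multiplicatively, using independence of the blocks under $\tilde{\p}_{t}^{\text{St},(2)}$.
\begin{itemize}
\item Take $J=O(\log a)$. For $j>J$ (keeping the constraint at $J$), the window itself bounds each increment by $2a+(B+1)j+O(\log j)\le\frac1{40}(\log x_j)^{1/4}$. So out-of-range cells only arise at steps $j\le J$.
\item For the upper bound, reduce to $a\le\sqrt n$ by monotonicity. Then drop the constraints for $j<J$, condition on the first $J$ blocks, and apply Lemma \ref{lem7} only to blocks $J<j\le n$. Since both coordinates start inside their windows at time $J$, the ballot estimate gives $\ll\bigl((a+BJ)/\sqrt n\bigr)^2\ll(a/\sqrt n)^2$.
\item For the lower bound, shrink the window for $j\le J$ so that all increments lie in range. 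Then check that each coordinate's Gaussian ballot probability drops only by a constant factor.
\item That check needs a first-exit decomposition, not an additive bound. The tiny probability of leaving the narrowed window at step $j$ is multiplied by a ballot probability $\ll(a+Bj+|T_j|)/\sqrt n$ for the rest of the walk, where $T_j$ is the centred position at step $j$, and the resulting sum is $\ll\exp(-c\,e^{D/2})\,a/\sqrt n$.
\end{itemize}

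Conversely, the difficulty you flag at the end is not real. On in-range cells Lemma \ref{lem7} gives a uniform relative error $1+O(1/\sqrt C)$, and this survives summation over positive terms. Moreover, up to $O(1)$ boundary shifts, the cell set is a product of sets for the two coordinates, so the Gaussian sum factorizes exactly.
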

\begin{proof}
 The proof proceeds along the same line as done in the proof of \cite[Proposition 7]{Harper20}, upon replacing the $(G_m)_{m=1}^{j}$ there by a sequence of independent Gaussian random variables with mean zero, variance $\sum_{x_{m}^{1/e} < p \leq x_m} \frac{\lambda^2(p)}{2p^{1 + 2\sigma}}$ and using Lemma \ref{lem7}. This completes the proof.
\end{proof}

  We now give a Rademacher counterpart of Lemma \ref{lem4}, which is an analogue of \cite[Lemma 5]{Harper20}.
\begin{lemma}
\label{lem5}
With the notation as above and $t \in \mr\backslash{\{0\}}$, suppose that $x_1 \geq \max\{e^{C/|t|},e^{C\log^{2}|t|}\}$ is large, $|\sigma| \leq 1/\log x_n$, and that the numbers $(t_j)_{j=1}^{n}$ satisfy $|t_j - t| \leq \frac{1}{j^{2/3} \log x_j}$.  Moreover, let $(u_j)_{j=1}^{n}$ and $(v_j)_{j=1}^{n}$ be sequences of real numbers such that for any $1 \leq j \leq n$,
$$ -(1/80)\sqrt{\log x_j} \leq u_j \leq v_j \leq (1/80)\sqrt{\log x_j}. $$
Then we have
\begin{align*}
 \p((u_j - j) + O(1) \leq \sum_{m=1}^{j} G_m \leq (v_j-j) - O(1) \; \forall j \leq n) & \ll  \tilde{\p}_{t}^{\text{Rad}}(u_j \leq \sum_{m=1}^{j} \log|I_{l_m}(1/2 + \sigma + it_m)| \leq v_j \; \forall j \leq n)  \\
& \ll  \p((u_j - j) - O(1) \leq \sum_{m=1}^{j} G_m \leq (v_j-j) + O(1) \; \forall j \leq n) ,
\end{align*}
where $G_m$ are independent Gaussian random variables, each having mean 0 and variance $\sum_{x_{m}^{1/e} < p \leq x_m} \frac{\lambda^2(p)(1+\cos(2t_m \log p))}{2p^{1 + 2\sigma}}$.
\end{lemma}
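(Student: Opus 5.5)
This is the Rademacher analogue of Lemma \ref{lem4}, so I will follow the proof of \cite[Lemma 5]{Harper20}. The argument has three parts.

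\textbf{Step 1: one-dimensional local law (analogue of Lemma \ref{lem3}).} Under $\tilde{\p}_t^{\text{Rad}}$ the variables $\log|I_{l_m}(1/2+\sigma+it_m)|$ are independent over $m$, since the $I_{l_m}$ involve disjoint ranges of primes. Take $t_1=t_m$, $t_2=0$ in Lemma \ref{eulerproductR}, and read the measure $\tilde{\p}_t^{\text{Rad}}$ with $t$ replaced by $t_m$ on the block $(x_m^{1/e},x_m]$; the precise tilt can be adjusted as in \cite{Harper20}. Normalizing, the characteristic function of $\log|I_{l_m}(\tfrac12+\sigma+it_m)|$ becomes
\[
\exp\Big(\sum_{x_m^{1/e}<p\le x_m}\frac{\lambda^2(p)\big(iu\,c(t_m,0,p)-\tfrac{u^2}{4}(1+\cos(2t_m\log p))\big)}{p^{1+2\sigma}}+\tilde T(u)-\tilde T(0)\Big).
\]
Here $c(t_m,0,p)=2\cos^2(t_m\log p)-\tfrac12\cos(2t_m\log p)=1+\tfrac12\cos(2t_m\log p)$. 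This is the characteristic function of a Gaussian $N_m$ with
\[
\text{mean }\sum\frac{\lambda^2(p)(1+\frac12\cos(2t_m\log p))}{p^{1+2\sigma}},\qquad \text{variance }\sum\frac{\lambda^2(p)(1+\cos(2t_m\log p))}{2p^{1+2\sigma}},
\]
up to the error $\tilde T$. Using the bounds $\tilde T(u)\ll(1+|u|^3)/(\sqrt{x_m}\log x_m)$ and $\tilde T'(u)\ll 1/(\sqrt{x_m}\log x_m)$ for $|u|\le1$, the Berry--Esseen inequality, exactly as in Lemma \ref{lem3}, gives
\[
\tilde{\p}_t^{\text{Rad}}(\text{box of side }1/j^2)=(1+O(x_1^{-1/100}))\,\p(\text{same box for }N_j).
\]
Then I sum over boxes as in the proof of Lemma \ref{lem4} (\cite[Lemma 4]{Harper20}). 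The $\pm2$ slack absorbs the box widths, since $\sum 1/j^2<2$. This compares the event for $\sum_{m\le j}\log|I_{l_m}|$ with the same event for $\sum_{m\le j}N_m$.

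\textbf{Step 2: the means.} By \eqref{sumlambdasquareoverp}, $\sum_{x_m^{1/e}<p\le x_m}\lambda^2(p)/p^{1+2\sigma}=1+O(1/\log x_m+\log x_m/\log x_n)$. Partial summation with \eqref{merten4}, as in \eqref{sumlambdacos}, gives
\[
\sum_{x_m^{1/e}<p\le x_m}\frac{\lambda^2(p)\cos(2t_m\log p)}{p^{1+2\sigma}}\ll\frac{1+\log(1+|t_m|)}{|t_m|\log x_m}.
\]
The hypotheses $|t_m-t|\le 1/(m^{2/3}\log x_m)$ and $x_1\ge\max\{e^{C/|t|},e^{C\log^2|t|}\}$ give $|t_m|\asymp|t|$. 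The first hypothesis controls small $|t|$, and the $e^{C\log^2|t|}$ condition controls the logarithmic factor (zero-free region) when $|t|$ is large. Hence each mean equals $1+O(1/(Ce^{m}))+O(e^{-(n-m)})$.

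Writing $N_m=1+G_m+\delta_m$, the shifts satisfy $\sum_m|\delta_m|=O(1)$. This is where the $O(1)$ adjustments of $u_j-j$ and $v_j-j$ come from, and it yields both inequalities of the statement with $G_m$ having exactly the stated variance.

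\textbf{Main obstacle.} The main obstacle is Step 2 for large $|t|$. There the oscillatory prime sum needs \eqref{merten4} with a uniform error term, and the size condition $e^{C\log^2|t|}$ must be used carefully. The fallback is to redo the partial summation with the explicit zero-free region of $L(s,\operatorname{sym}^2 f)$ from Lemma \ref{RS}, together with the classical one for $\zeta$.

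A secondary check is that the range $|u_j|,|v_j|\le\frac1{80}\sqrt{\log x_j}$ keeps the Gaussian densities on each box within constant factors of each other, so that the multiplicative error from Berry--Esseen survives summation. This follows verbatim from \cite{Harper20}.
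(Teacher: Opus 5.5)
Your Step 1 computes the characteristic function under the wrong measure. In $\tilde{\p}_{t}^{\text{Rad}}$ every block is tilted by $|1+\lambda(p)h(p)p^{-1/2-\sigma-it}|^2$ with the \emph{fixed} $t$. You may not ``read the measure with $t$ replaced by $t_m$'', and \cite{Harper20} does not adjust the tilt.

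If that replacement were legitimate, your argument would apply to any $t_m$ with $|t_m|\asymp|t|$, for example $t_m=2t$, and the conclusion is then false. Under the tilt at $t$, the mean of $\log|I_{l_m}(\tfrac12+\sigma+2it)|$ is $\sum\lambda^2(p)(\cos(3t\log p)+\cos(t\log p)-\tfrac12\cos(4t\log p))/p^{1+2\sigma}\approx 0$, not $\approx 1$, so there is no drift $j$. Relatedly, in your write-up the hypothesis $|t_j-t|\le 1/(j^{2/3}\log x_j)$ only serves to give $|t_m|\asymp|t|$, whereas it is the crux of the lemma. Making a genuine change of measure rigorous would require controlling the Radon--Nikodym factor on boxes of width $1/j^2$, which you do not address.

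The repair, which is the paper's argument, is to take $t_1=t$ and $t_2=t_m-t$ in Lemma \ref{eulerproductR}.
\begin{itemize}
\item The quadratic term depends only on $t_1+t_2=t_m$, so your variance is correct.
\item The linear term becomes $\sum\lambda^2(p)c(t,t_m-t,p)/p^{1+2\sigma}$, where $c(t,t_m-t,p)=\cos((t+t_m)\log p)+\cos((t-t_m)\log p)-\tfrac12\cos(2t_m\log p)$.
\item For $p\le x_m$ one has $|t-t_m|\log p\le m^{-2/3}$, hence $\cos((t-t_m)\log p)=1+O(m^{-4/3})$. This is exactly where the hypothesis enters.
\item The sums at frequencies $t+t_m$ and $2t_m$ (both $\asymp|t|$) are $\ll 1/(Ce^{m})$ by partial summation with \eqref{merten4}, as you indicate.
\end{itemize}
So the mean is $1+O(e^{-(n-m)}+(Ce^m)^{-1}+m^{-4/3})$. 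Since $\sum_m m^{-4/3}<\infty$, you still get $\sum_m|\delta_m|=O(1)$. The rest of your outline then goes through: Berry--Esseen on boxes, summation as in Lemma \ref{lem4}, and the large-$|t|$ partial summation.
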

\begin{proof}
Our proof of similar to that of \cite[Lemma 5]{Harper20}. Lemma \ref{eulerproductR} implies that for any $|u| \leq x_{j}^{1/20}$,
$$ \tilde{\E}_{t}^{\text{Rad}} e^{iu\log|I_{l_j}(1/2 + \sigma + it_j)|}= \exp\Big (\sum_{x_{j}^{1/e} < p \leq x_j} \frac{\lambda^2(p)(iuc(t , t_j - t , p) - (u^{2}/4)(1+\cos(2t_j \log p)))}{p^{1 + 2\sigma}} + \tilde T(u) - \tilde T(0)\Big ) , $$
where
\begin{align}
\label{cdef}
\begin{split}
c(t , t_j - t , p) = 2\cos(t \log p) \cos(t_j\log p) - \tfrac12 \cos(2 t_j \log p).
\end{split}
\end{align}
  As shown in the proof of \cite[Lemma 5]{Harper20} that our assumption on $|t_j - t|$ implies that
\begin{align*}
%%\label{cosidentity}
\begin{split}
2\cos(t \log p) \cos(t_j\log p) & = \cos((t+t_j)\log p) + 1 + O\Big( \frac {1}{j^{4/3}} \Big).
\end{split}
\end{align*}

  As $x_{j}^{1/e} \geq x_{1}^{1/e} \geq \max\{e^{C/e|t|}, e^{(C/e)\log^{2}|t|}\}$ is large, we obtain similar to \eqref{sumlambdacos} upon using \eqref{merten4} and partial summation similar to see that
\begin{align}
\label{sumlambdasquarecosoverp}
\begin{split}
\sum_{x_{j}^{1/e} < p \leq x_j} \frac{\lambda^2(p)\cos((t+t_j)\log p)}{p^{1 + 2\sigma}} , \;\;\; \sum_{x_{j}^{1/e} < p \leq x_j} \frac{\lambda^2(p)\cos(2 t_j \log p)}{p^{1 + 2\sigma}} \ll \frac{1}{|t|\log x_j} \ll \frac{1}{e^{j}|t| \log x_1} \ll \frac{1}{Ce^{j}} .
\end{split}
\end{align}

We now proceed as in the proofs of \cite[Lemma 3--Lemma 4]{Harper20}.  The relevant Gaussian random variables $N_j$ in our case have mean $\sum_{x_{j}^{1/e} < p \leq x_j} \frac{\lambda^2(p)c(t , t_j - t , p)}{p^{1 + 2\sigma}}$ and variance $\sum_{x_{j}^{1/e} < p \leq x_j} \frac{\lambda^2(p)(1+\cos(2t_j \log p))}{2p^{1 + 2\sigma}}$. Moreover, by \eqref{lambdabound}, \eqref{sumlambdasquareoverp}, \eqref{cdef}, \eqref{sumlambdasquarecosoverp} and our assumptions on $t_j , x_j$, we deduce that
\begin{align*}
%%\label{sumlambdasquarecoverp}
\begin{split}
\sum_{x_{j}^{1/e} < p \leq x_j} \frac{\lambda^2(p)c(t , t_j - t , p)}{p^{1 + 2\sigma}} =& 1 + O\Big(\frac{1}{\log x_j} + \frac{\log x_j}{\log x_n} + \frac{1}{j^{4/3}}\Big) = 1 + O\Big(\frac{1}{e^{n-j}} + \frac{1}{j^{4/3}}\Big), \\
\sum_{x_{j}^{1/e} < p \leq x_j} \frac{\lambda^2(p)(1+\cos(2t_j \log p))}{2p^{1 + 2\sigma}} =& \sum_{x_{j}^{1/e} < p \leq x_j} \frac{\lambda^2(p)}{2p^{1 + 2\sigma}} + O\Big(\frac 1C\Big).
\end{split}
\end{align*}
 We then follow the treatments in the proofs of \cite[Lemma 3--Lemma 4]{Harper20} using the above formulas to get that the assertion of the lemma is valid. This completes the proof.
\end{proof}

 We apply Lemma \ref{lem5} with Probability Results 1 and 2 in \cite{Harper20} and argue as in the proof of \cite[Proposition 6]{Harper20} to deduce the following Rademacher analogue of Proposition \ref{prop5}.
\begin{prop}
\label{prop6}
With the notation as above, there exists a large natural number $B$ so that the following is valid.  Let $t \in \mr\backslash\{0\}$, $D \geq \max\{\log(1/|t|), 2\log\log(1+|t|)\} + (B+1)$ be any natural number, $n \leq \log\log x - D$ large.  Define the decreasing sequence $(l_j)_{j=1}^{n}$ of non-negative integers by $l_j := \lfloor \log\log x \rfloor - D - j$. Suppose that $|\sigma| \leq 1/e^{D+n}$, and that $(t_j)_{j=1}^{n}$ is a real sequence satisfying $|t_j - t| \leq 1/(j^{2/3} e^{D+j})$ for all $j$.  Then uniformly for any large $a$ and any function $H(n)$ satisfying $|H(n)| \leq 10\log n$, and with $I_{l}(s)$ defined in \eqref{Ildef} corresponding to a Rademacher random multiplicative function, we have
$$ \tilde{\p}_{t}^{\text{Rad}}(-a -Bj \leq \sum_{m=1}^{j} \log|I_{l_m}(\tfrac12 + \sigma + it_m)| \leq a + j + H(j) \; \forall j \leq n) \asymp \min\left\{1,\frac{a}{\sqrt{n}}\right\} . $$
\end{prop}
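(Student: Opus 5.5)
The plan follows the proof of \cite[Proposition 6]{Harper20}, with Lemma \ref{lem5} taking the place of Lemma 5 there. Put $x_j := x^{e^{-(l_j+1)}}$, so that $\log x_j \asymp e^{D+j}$. First check that Lemma \ref{lem5} applies.
\begin{itemize}
\item Since $l_j = \lfloor \log\log x\rfloor - D - j$, we have $\log x_1 \geq e^{D}$ up to a bounded factor.
\item The assumption $D \geq \max\{\log(1/|t|), 2\log\log(1+|t|)\} + (B+1)$ gives $\log x_1 \gg e^{B}\max\{1/|t|, \log^2(1+|t|)\}$. For $B$ large this yields $x_1 \geq \max\{e^{C/|t|}, e^{C\log^2|t|}\}$.
\item The condition $|\sigma| \leq 1/e^{D+n}$ gives $|\sigma| \ll 1/\log x_n$.
\item The condition $|t_j - t| \leq 1/(j^{2/3}e^{D+j})$ gives $|t_j - t| \ll 1/(j^{2/3}\log x_j)$. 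Any bounded constants are absorbed by adjusting $B$.
\end{itemize}

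Next, take $u_j = -a - Bj$ and $v_j = a + j + H(j)$. For these, Lemma \ref{lem5} needs $|u_j|, |v_j| \leq (1/80)\sqrt{\log x_j}$. This holds when $a$ is at most a small power of $e^{D+j}$, which is automatic for $j$ large compared with $\log a$.

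For very large $a$ and small $j$ the constraint can fail, and I handle this as Harper does. The events at small $j$ impose almost no restriction, because $\log|I_{l_m}|$ has bounded variance under $\tilde{\p}_t^{\text{Rad}}$. For the upper bound, simply drop the constraints at those $j$; for $a \gg \sqrt n$ the upper bound is trivial anyway. For the lower bound, first reduce to $a \leq \sqrt n$ by monotonicity in $a$. Then restrict to the event that each $|\log|I_{l_m}||$ is moderately small for the first few $m$, which has probability $\gg 1$ by a second-moment (Chebyshev) bound from Lemma \ref{eulerproductR}. Use independence across the disjoint prime ranges to decouple this event from the rest.

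With this in place, Lemma \ref{lem5} sandwiches the $\tilde{\p}_t^{\text{Rad}}$-probability between Gaussian random-walk probabilities of the form
\[
\p\Bigl(-a - (B+1)j \pm O(1) \leq \sum_{m\leq j} G_m \leq a + H(j) \pm O(1)\ \forall j \leq n\Bigr),
\]
where the $G_m$ are independent with mean $0$. By \eqref{sumlambdasquareoverp1} and the second estimate in the proof of Lemma \ref{lem5}, their variances are $\tfrac12 + O(1/C + e^{-(n-j)})$, so they are bounded above and below.

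Probability Result 1 of \cite{Harper20} (the ballot-type estimate for Gaussian walks with slowly varying boundaries $a + H(j)$, $|H(j)|\leq 10\log j$) gives the upper bound $\ll \min\{1, a/\sqrt n\}$ for the upper-barrier probability. Probability Result 2 of \cite{Harper20} gives the matching lower bound $\gg \min\{1, a/\sqrt n\}$, even with the lower barrier $-a - (B+1)j$ included, since that barrier drifts linearly away and costs only a constant factor once $B$ is large. The $O(1)$ perturbations of the barriers are absorbed by changing $a$ by $O(1)$, which is harmless because $a$ is large.

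The main obstacle is the bookkeeping for the Lemma \ref{lem5} hypotheses, in particular choosing $B$ large enough that the oscillatory sums in \eqref{sumlambdasquarecosoverp} are $O(1/(Ce^j))$ and summable in $j$. Only then do the mean and variance perturbations accumulate to $O(1)$ along the walk. This summability is exactly what makes the constraint on $D$ in terms of $t$ necessary.
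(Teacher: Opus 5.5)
Your proposal is essentially the paper's argument. The paper proves this statement by combining Lemma \ref{lem5} with Probability Results 1 and 2 of \cite{Harper20} and arguing as in Harper's Proposition 6; your checks of the hypotheses of Lemma \ref{lem5} (via $\log x_j\asymp e^{D+j}$) and the barrier bookkeeping $u_j-j=-a-(B+1)j$, $v_j-j=a+H(j)$ spell out what that deferral involves. The one step to tighten is the small-$j$ lower bound, where the cut-offs for $|\log|I_{l_m}||$ must grow like $\epsilon\sqrt{\log x_m}$, so Chebyshev gives failure probabilities $\ll \epsilon^{-2}e^{-(B+m)}$ whose complementary product stays $\gg 1$ over the roughly $\log a$ initial blocks; a fixed cut-off only gives a probability decaying like a power of $a$.
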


%%--------------------------------------------------------------------------------------------------
%%--------------------------------------------------------------------------------------------------

\section{Bounding $\E|\sum_{n \leq x} h(n)\lambda(n)|^{2q}$ by Euler products}

We now start our proof of Theorem \ref{lowerboundsfixedmodmean}.  Using the key ideas in \cite{Harper20}, we bound $\E|\sum_{n \leq x} h(n)\lambda(n)|^{2q}$ by certain Euler products.

\subsection{The upper bounds}
\label{secupperbounds}

For a Steinhaus or Rademacher random multiplicative function $h(n)$, any real number $P \geq 2$ and $s \in \comc$ with $\Re(s) > 0$, let $F_{P, f}$ denote the partial Euler product of $h(n)\lambda(n)$ over $P$-smooth numbers.  Thus, we have by \eqref{Lambdapkrel},
\begin{align}
\label{FPdef}
\begin{split}
F_{P,f}(s) =& \sum_{\substack{n=1, \\ n \; \text{is} \; P \; \text{smooth}}}^{\infty} \frac{h(n)\lambda(n)}{n^s} \\
=& \begin{cases}
\displaystyle \prod_{p \leq P} \left(1 - \frac{\alpha_ph(p)}{p^s}\right)^{-1}\left(1 - \frac{\beta_ph(p)}{p^s}\right)^{-1},  \quad & \text{$h$ is a Steinhaus random multiplicative function}, \\
\displaystyle \prod_{p \leq P} \left( 1 + \frac{\lambda(p)h(p)}{p^s} \right), \quad & \text{$h$ is a Rademacher random multiplicative function}.
\end{cases}
\end{split}
\end{align}
  We also write, for brevity, $F_{k,f}$ for the function $F_{x^{e^{-(k+1)}}, f}$ for any integer $-1 \leq k \leq \log\log x $.
We also define $\| \cdot \|_{r} := (\E|\cdot|^{r})^{1/r}$ for $r \geq 1$. \newline

The following result estimating $\E|\sum_{n \leq x} h(n)\lambda(n)|^{2q}$ from above is analogous to \cite[Propositions 1--2]{Harper20}.
\begin{prop}
\label{propupperboundsSteinhaus}
With the notation as above, let $x$ be large and set $\mathcal{K} := \lfloor \log\log\log x \rfloor$. Then we have, uniformly for all $2/3 \leq q \leq 1$ and any Steinhaus random multiplicative function $h(n)$,
\begin{align}
\label{EupperboundS}
\begin{split}
 \Big\| \sum_{n \leq x} h(n)\lambda(n) \Big\|_{2q} \ll \sqrt{\frac{x}{\log x}} \sum_{0 \leq k \leq \mathcal{K}} \Big\| \ \int\limits_{-1/2}^{1/2} |F_{k,f}(\tfrac12 - \tfrac{k}{\log x} + it)|^2 \ \dif t \Big\|_{q}^{1/2} + \sqrt{\frac{x}{\log \log x}} .
\end{split}
\end{align}
 Similarly, we have uniformly for all $2/3 \leq q \leq 1$ and any Rademacher random multiplicative function $h(n)$,
\begin{align}
\label{EupperboundR}
\begin{split}
\Big\| \sum_{n \leq x} h(n)\lambda(n) \Big\|_{2q}  \ll & \sqrt{\frac{x}{\log x}} \sum_{0 \leq k \leq \mathcal{K}} \max_{N \in \mz} \frac{1}{(|N|+1)^{1/8}} \Big\| \int\limits_{N-1/2}^{N+1/2} |F_{k,f}(\tfrac12 - \tfrac{k}{\log x} + it)|^2 \ \dif t \Big\|_{q}^{1/2} + \sqrt{\frac{x}{\log \log x}} .
\end{split}
\end{align}
\end{prop}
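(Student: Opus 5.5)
We follow the proof of \cite[Propositions 1--2]{Harper20} closely. The only arithmetic inputs that change are the Euler factors and the second moment $\sum\lambda^2(n)$. We decompose $n\le x$ according to its largest prime factor $P(n)$. Let $x_k := x^{e^{-(k+1)}}$ and cut the range $P(n)\in(x_{k},x_{k-1}]$ for $0\le k\le \mathcal K$. The $n$ with $P(n)\le x_{\mathcal K}\approx x^{1/\log\log x}$ are handled separately.

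\textbf{The smooth numbers.} For those $n$ we use $\|\cdot\|_{2q}\le\|\cdot\|_2$ and orthogonality. This gives the square of the norm as $\sum_{n\le x,\,P(n)\le x^{1/\log\log x}}\lambda^2(n)$. We bound this by Rankin's trick with $n^{-1+\delta}$ weights, using \eqref{lambdabound}, \eqref{merten1} and \eqref{merten3} to control the Euler product $\prod_{p\le y}(1+\lambda^2(p)p^{-1+\delta}+\dots)$. The target is $\ll x/\log\log x$ (in fact much smaller), which yields the term $\sqrt{x/\log\log x}$.

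\textbf{The $k$-th piece, first reduction.} We write the sum over $n$ with $P(n)\in(x_k,x_{k-1}]$ as $\sum_{x_k<p\le x_{k-1}}h(p)\lambda(p)\sum_{m\le x/p,\,P(m)<p}h(m)\lambda(m)$. Prime powers $p^2\mid n$ are absorbed into a negligible error using $|\lambda(p^j)|\le j+1$. Conditioning on $(h(p))_{p\le x_k}$, the outer sum is a linear form in the independent $h(p)$ for $p>x_k$. We apply Khintchine's inequality (upper bound; the sub-Gaussian estimate proved inside Lemma \ref{Khintchine} gives it for $2q\le 2$ trivially by Hölder). Then $\E|\cdot|^{2q}\le \E\big(\sum_p \lambda^2(p)|\sum_{m\le x/p,\,P(m)<p}h(m)\lambda(m)|^2\big)^q$. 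Next we smooth in $p$ as Harper does: replace $\sum_{x_k<p\le x_{k-1}}\lambda^2(p)G(x/p)$ by $\frac{1}{X}\int_{x_k}^{x_{k-1}}\sum_{t<p\le t(1+1/X)}\lambda^2(p)\,|G(x/t)|^2\,\dif t$ with $X=e^{\sqrt{\log x}}$. The prime sum in short intervals is controlled by \eqref{merten3}, whose error $\exp(-b_4\sqrt{\log t})$ is exactly what is needed, and is $\ll \frac{t}{X\log t}$. The discrepancy from replacing $P(m)<p$ by $P(m)\le x_k$ (so that $G$ becomes a sum involving only $F_{k,f}$ coefficients) is handled as in Harper. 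It costs the same $\sqrt{x/\log\log x}$-type error by an $L^2$ computation with \eqref{lambdasquareasymp}.

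\textbf{Parseval and the shift.} After the substitution $z=x/t$ we have $\int |\sum_{m\le z,\,P(m)\le x_k} h(m)\lambda(m)|^2 \frac{\dif z}{z^{2}}$. We insert the factor $z^{-2k/\log x}\gg$ on the range $z\ge x/x_{k-1}$, where $\log z\gtrsim (1-e^{-k})\log x$; this is where the shift $-k/\log x$ comes from. We then apply Lemma \ref{parseval} with $\sigma=-k/\log x$ shifted appropriately, giving $\int_{\mr}|F_{k,f}(\frac12-\frac{k}{\log x}+it)|^2\frac{\dif t}{|\frac12-\frac k{\log x}+it|^2}$. The factor $e^{-\dots}$ lost in the weight is paid for by the $x/\log x$ normalization.

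\textbf{Main obstacle: cutting the $t$-range.} For Steinhaus, $h(m)m^{-it}$ has the same law as $h(m)$ after multiplying by $p^{-it}$ at each prime, so each unit interval $[N-1/2,N+1/2]$ has the same distribution. Summing $1/(N^2+1)$ then gives $\|\int_{-1/2}^{1/2}\|_q$ via $(\sum a_N)^q\le\sum a_N^q$ for $q\le1$. For Rademacher there is no such translation invariance, so we keep the $\max_N$. We spend $N^{-2q}$ against $(|N|+1)^{-1/8\cdot 2q}$ using $q\ge 2/3$, so that $\sum_N (|N|+1)^{-2q+q/4}$ converges; this is why the range $2/3\le q\le1$ is imposed. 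Finally, we sum over $0\le k\le\mathcal K$ by Minkowski's inequality in $\|\cdot\|_{2q}$ (valid as $2q\ge1$). The delicate step is justifying the short-interval prime sum with $\lambda^2(p)$ weights uniformly on $x_k\ge x^{1/\log\log x}$; \eqref{merten3} suffices there since $\sqrt{\log x_k}\gg \sqrt{\log x}/\sqrt{\log\log x}$ beats the required loss.
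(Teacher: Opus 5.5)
Your $k$-th piece has a genuine gap, and it comes from decomposing by the largest prime factor. You write the piece as $\sum_{x_k<p\le x_{k-1}}h(p)\lambda(p)\sum_{m\le x/p,\,P(m)<p}h(m)\lambda(m)$. You then replace $P(m)<p$ by $P(m)\le x_k$ and assert that the discrepancy costs only $O(\sqrt{x/\log\log x})$ by an $L^2$ computation. It does not.

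The discrepancy is the sum of $h(n)\lambda(n)$ over $n\le x$ having two distinct prime factors in $(x_k,x_{k-1}]$. By orthogonality, its squared $L^2$ norm is the $\lambda^2$-weighted count of such $n$. For $k=0$ this includes all $n=pp'm'\le x$ with $x^{1/e}<p'<p$. Since $2/e<1$, \eqref{merten1} and \eqref{lambdasquareasymp} give a count $\asymp x\iint_{1/e<a<b,\ a+b<1}\frac{\dif a\,\dif b}{ab}\asymp x$. So the error is of size $\sqrt{x}$, which makes the inequality trivial. Bounding its $L^{2q}$ norm any better is as hard as the original problem.

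Even before the replacement, your conditioning step fails. The coefficients $\sum_{m\le x/p,\,P(m)<p}h(m)\lambda(m)$ depend on $h(p')$ for $x_k<p'<p$. So the outer sum is not a linear form with coefficients determined by $(h(p))_{p\le x_k}$.

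The missing idea is the factorisation the paper uses, following Harper. Write $n=mn'$, where $m>1$ is the product of \emph{all} prime factors of $n$ lying in $(x_k,x_{k-1}]$, and $n'$ is $x_k$-smooth. Conditionally on $(h(p))_{p\le x_k}$ the $h(m)$ are orthogonal. Conditional H\"older then gives exactly $\|\sum_m\lambda^2(m)|\sum_{n'\le x/m,\ x_k\text{-smooth}}h(n')\lambda(n')|^2\|_q^{1/2}$, with no discrepancy term at all.

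The smoothing is then over $m$, not over primes. The error piece is handled by \eqref{lambdasquareasymp} together with \eqref{lambdasquareplarge0}. The main piece needs an upper-bound sieve for such $m$ in $[t/(1+1/X),t]$: Harper's Number Theory Result 1, with $\lambda^2(m)\le d(m)^2$.

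Since $t=m$ now runs over all of $(x_k,x]$, your device of inserting $z^{2k/\log x}$ only on $z\ge x^{1-e^{-k}}$ no longer applies. The shift must absorb the weight $1/\log t$ on the whole range, for example via $\frac{1}{\log(x/z)}\ll\frac{z^{2k/\log x}}{\log x}$ for $1\le z\le x^{1-e^{-(k+1)}}$.

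Separately, \eqref{merten3} cannot give your short-interval bound with $X=e^{\sqrt{\log x}}$. Its error $t\exp(-b_4\sqrt{\log t})$ exceeds $t/X$ when $\log t\approx\log x/\log\log x$. Only an upper bound is needed there, and a sieve supplies it.

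Your treatment of the smooth numbers, Parseval, the unit-interval splitting, translation invariance, and the $\max_N$ step with $q\ge 2/3$ is fine and matches the paper.
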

\begin{proof}
 Denote by $P(n)$ the largest prime factor of $n$. For $h(n)$ either a Steinhaus or a Rademacher random multiplicative function, Minkowski's inequality yields, for all $2/3 \leq q \leq 1$,
\begin{align}
\label{EPndecomp}
 \Big\| \sum_{n \leq x} h(n)\lambda(n) \Big\|_{2q} \leq \sum_{0 \leq k \leq \mathcal{K}} \Big\| \sum_{\substack{n \leq x, \\ x^{e^{-(k+1)}} < P(n) \leq x^{e^{-k}}}} h(n)\lambda(n) \Big\|_{2q} + \Big\| \sum_{\substack{n \leq x, \\ P(n) \leq x^{e^{-(\mathcal{K}+1)}} }} h(n)\lambda(n) \Big\|_{2q} . 
 \end{align}
Then H\"{o}lder's inequality and the orthogonality of random multiplicative functions (recall $\mathcal{K} = \lfloor \log\log\log x \rfloor$) lead to
\begin{align}
\label{EPnsmall}
 \Big\| \sum_{\substack{n \leq x, \\ P(n) \leq x^{e^{-(\mathcal{K}+1)}} }} h(n)\lambda(n) \Big\|_{2q} \leq \Big\| \sum_{\substack{n \leq x, \\ P(n) \leq x^{e^{-(\mathcal{K}+1)}} }} h(n)\lambda(n) \Big\|_{2}  \leq \Biggl(\sum_{n \leq x, P(n) \leq x^{1/\log\log x}}\lambda(n)^{2}\Biggr)^{1/2} \ll \sqrt{\frac{x}{\log \log x}},
\end{align}
 where the last bound above is given in \cite[Section 3.1]{G&Wu25-12}. \newline

Let $\E^{(k)}$ denote expectation conditional on $(h(p))_{p \leq x^{e^{-(k+1)}}}$.  H\"{o}lder's inequality and a mean square calculation render 
\begin{align*}
\sum_{0 \leq k \leq \mathcal{K}} \Big\|  \sum_{\substack{n \leq x, \\ x^{e^{-(k+1)}} < P(n) \leq x^{e^{-k}}}} & h(n)\lambda(n) \Big\|_{2q}  =  \sum_{0 \leq k \leq \mathcal{K}} \Big\| \sum_{\substack{1 < m \leq x , \\ p|m \Rightarrow x^{e^{-(k+1)}} < p \leq x^{e^{-k}}}} h(m)\lambda(m) \sum_{\substack{n \leq x/m, \\ n \; \text{is} \; x^{e^{-(k+1)}} \text{-smooth}}} h(n)\lambda(n)   \Big\|_{2q}  \\
& = \sum_{0 \leq k \leq \mathcal{K}} \Bigl( \E \E^{(k)}\Bigl|\sum_{\substack{1 < m \leq x , \\ p|m \Rightarrow x^{e^{-(k+1)}} < p \leq x^{e^{-k}}}} h(m)\lambda(m) \sum_{\substack{n \leq x/m, \\ n \; \text{is} \; x^{e^{-(k+1)}} \text{-smooth}}} h(n)\lambda(n) \Bigr|^{2q} \Bigr)^{1/2q} \\
& \leq \sum_{0 \leq k \leq \mathcal{K}} \Bigl( \E \Big( \E^{(k)}\Bigl|\sum_{\substack{1 < m \leq x , \\ p|m \Rightarrow x^{e^{-(k+1)}} < p \leq x^{e^{-k}}}} h(m)\lambda(m)  \sum_{\substack{n \leq x/m, \\ n \; \text{is} \; x^{e^{-(k+1)}} \text{-smooth}}} h(n)\lambda(n)  \Bigr|^2 \Big)^{q} \Bigr)^{1/2q} \\
& = \sum_{0 \leq k \leq \mathcal{K}} \Big\| \sum_{\substack{1 < m \leq x , \\ p|m \Rightarrow x^{e^{-(k+1)}} < p \leq x^{e^{-k}}}} \lambda^2(m) \Bigl|\sum_{\substack{n \leq x/m, \\ n \; \text{is} \; x^{e^{-(k+1)}} \text{-smooth}}} h(n)\lambda(n)  \Bigr|^2 \Big\|_{q}^{1/2} .
\end{align*}

Now set $X = e^{b_4\sqrt{\log x}/4}$ where $b_4$ is given in \eqref{merten3}.  We have uniformly, for any $2/3 \leq q \leq 1$ and $0 \leq k \leq \mathcal{K}$,
\begin{align*}
%%\label{Erandnlarge000}
\begin{split}
\E \Bigl(\sum_{\substack{1 < m \leq x , \\ p|m \Rightarrow x^{e^{-(k+1)}} < p \leq x^{e^{-k}}}} & \lambda^2(m) \Bigl| \sum_{\substack{n \leq x/m, \\ n \; \text{is} \; x^{e^{-(k+1)}} \text{-smooth}}} h(n)\lambda(n)  \Bigr|^2 \Bigr)^{q}  \\
 \ll & \E \Bigl( \sum_{\substack{1 < m \leq x , \\ p|m \Rightarrow x^{e^{-(k+1)}} < p \leq x^{e^{-k}}}} \frac{\lambda^2(m)X}{m} \int\limits_{m}^{m(1 + \frac{1}{X})} \Bigl| \sum_{\substack{n \leq x/t, \\ x^{e^{-(k+1)}} \text{-smooth}}} h(n)\lambda(n) \Bigr|^2 \dif t \Bigr)^{q} \\
& \hspace*{2cm} + \E \Bigl(\sum_{\substack{1 < m \leq x , \\ p|m \Rightarrow x^{e^{-(k+1)}} < p \leq x^{e^{-k}}}} \frac{\lambda^2(m)X}{m} \int\limits_{m}^{m(1 + \frac{1}{X})} \Bigl| \sum_{\substack{x/t < n \leq x/m, \\ x^{e^{-(k+1)}} \text{-smooth}}} h(n)\lambda(n) \Bigr|^2 \dif t \Bigr)^{q} \\
 =: & E_1+E_2, \quad \mbox{say}.
\end{split}
\end{align*}

From \eqref{EPndecomp}, \eqref{EPnsmall} and the above that
\begin{align}
\label{EPndecomp1}
 \Big\| \sum_{n \leq x} h(n)\lambda(n) \Big\|_{2q} \ll \sum_{0 \leq k \leq \mathcal{K}} (E_1^{1/(2q)}+E_2^{1/(2q)}) + \sqrt{\frac{x}{\log \log x}}.
\end{align}

H\"{o}lder's inequality and a mean square calculation yield
\begin{align}
\label{Erandnlarge0}
\begin{split}
E_2 \ll & \Bigl(\sum_{\substack{1 < m \leq x , \\ p|m \Rightarrow x^{e^{-(k+1)}} < p \leq x^{e^{-k}}}} \frac{\lambda^2(m)X}{m} \int\limits_{m}^{m(1 + \frac{1}{X})} \E \Bigl| \sum_{\substack{x/t < n \leq x/m, \\ x^{e^{-(k+1)}} \text{-smooth}}} h(n)\lambda(n) \Bigr|^2 \dif t \Bigr)^{q} \\
% =& \Biggl(\sum_{\substack{1 < m \leq x , \\ p|m \Rightarrow x^{e^{-(k+1)}} < p \leq x^{e^{-k}}}} \frac{\lambda^2(m)X}{m} \int\limits_{m}^{m(1+1/X)} \sum_{\substack{x/t < n \leq x/m, \\ x^{e^{-(k+1)}} \text{-smooth}}} \lambda(n)^{2} \dif t \Biggr)^q  \\
\ll & \Bigl(\sum_{\substack{1 < m \leq x , \\ p|m \Rightarrow x^{e^{-(k+1)}} < p \leq x^{e^{-k}}}}  \frac{\lambda^2(m)X}{m} \int\limits_{m}^{m(1+1/X)} \sum_{\substack{x/t < n \leq x/m}} \lambda(n)^{2}  \dif t \Bigr)^q  \\
\ll & \Bigl(\sum_{\substack{1 < m \leq x , \\ p|m \Rightarrow x^{e^{-(k+1)}} < p \leq x^{e^{-k}}}}  \frac{\lambda^2(m)X}{m} \int\limits_{m}^{m(1+1/X)} \frac xm-\frac xt+\Big( \frac x{m} \Big)^{3/5}  \dif t \Bigr)^q,
\end{split}
\end{align}
  where the last bound above follows from \eqref{lambdasquareasymp}.  If $m \leq t \leq m(1+1/X)$, then $\frac xm-\frac xt \leq \frac xm-\frac x{m(1+1/X)} \ll x/(mX)$. It follows that the last expression in \eqref{Erandnlarge0} is
\begin{align}
\label{EEjremaindersimplified}
\begin{split}
 %\ll & \Biggl(\sum_{\substack{1 < m \leq x , \\ p|m \Rightarrow x^{e^{-(k+1)}} < p \leq x^{e^{-k}}}}  \frac{\lambda^2(m)X}{m} \int\limits_{m}^{m(1+1/X)} \frac x{mX}+\Big( \frac x{m} \Big)^{3/5}  \dif t \Biggr)^q
 \ll  \Bigl(\sum_{\substack{1 < m \leq x , \\ p|m \Rightarrow x^{e^{-(k+1)}} < p \leq x^{e^{-k}}}}  \lambda^2(m)(\frac{x}{mX} + \Big( \frac x{m} \Big)^{3/5} \Big) \Bigr)^q.
\end{split}
\end{align}

  By \eqref{lambdasquareasymp} and partial summation,
\begin{align*}
%%\label{EEjremainder}
\begin{split}
 \sum_{\substack{m \leq x}} \frac{\lambda^2(m)}{m}\ll \log x.
\end{split}
\end{align*}
It follows from this and $X= e^{b_4\sqrt{\log x}/4}$ that for, $0 \leq k \leq \mathcal{K}$,
\begin{align}
\label{EEjremainder10}
\begin{split}
 \sum_{\substack{1 < m \leq x , \\ p|m \Rightarrow x^{e^{-(k+1)}} < p \leq x^{e^{-k}}}}  \frac{x\lambda^2(m)}{mX} \ll \frac{x}{X} \sum_{\substack{m \leq x}} \frac{\lambda^2(m)}{m} \ll \frac {x\log x}{X} \ll 2^{-e^{k}} \frac{x}{\log x}.
\end{split}
\end{align}

Next, for $x^{1/10} < v \leq x$, we have
$$ \sum_{\substack{m \leq v, \\ p | m \Rightarrow x^{e^{-(k+1)}} \leq p \leq x^{e^{-k}}}} \lambda^2(m) \leq x^{1/20} + (5^{20})^{-e^{k}/20}  \sum_{\substack{x^{1/20} \leq m \leq v, \\ p | m \Rightarrow x^{e^{-(k+1)}} \leq p \leq x^{e^{-k}}}} \lambda^2(m)(5^{20})^{\Omega(m)} , $$
since a number $d \geq x^{1/20}$ with all its prime factors smaller than $x^{e^{-k}}$ must have at least $e^{k}/20$ such factors, counted with multiplicity.  Moreover, by \eqref{lambdabound} we have $\lambda(m)^2 \leq d^2(m)$.  Writing $m=\prod p^{k_i}_i$ with $p_i$ being distinct primes and $k_i \geq 1$, then by the well-known inequality $1+x \leq e^x$ for all real $x$,  we have $d(m)=\prod(1+k_i) \leq \prod e^{k_i}=e^{\Omega(m)}$. It follows that
$$ \sum_{\substack{x^{1/20} \leq m \leq v, \\ p | m \Rightarrow x^{e^{-(k+1)}} \leq p \leq x^{e^{-k}}}} \lambda^2(m)(5^{20})^{\Omega(m)} \leq \sum_{\substack{x^{1/20} \leq m \leq v, \\ p | m \Rightarrow x^{e^{-(k+1)}} \leq p \leq x^{e^{-k}}}} (5^{20}e)^{\Omega(m)} , $$

 We now apply Number Theory Result 1 in \cite{Harper20}, setting the parameters there $v=v$, $u=x^{1/20}$, $y=x^{e^{-(k+1)}}$, $z=x^{e^{-k}}$, $\delta = 1/2$.  This leads to
\begin{eqnarray}
(5^{20})^{-e^{k}/20}  \sum_{\substack{x^{1/20} \leq m \leq v, \\ p | m \Rightarrow x^{e^{-(k+1)}} \leq p \leq x^{e^{-k}}}} \lambda^2(m)(5^{20})^{\Omega(m)} & \ll & 5^{-e^{k}} \frac{v}{\log(x^{e^{-(k+1)}})} \prod_{x^{e^{-(k+1)}} \leq p \leq x^{e^{-k}}}\left(1 - \frac{5^{20}e}{p}\right)^{-1} \nonumber \\
& \ll & 5^{-e^{k}} e^{k} \frac{v}{\log x} \ll 2^{-e^{k}} \frac{v}{\log x} . \nonumber
\end{eqnarray}
As $2^{-e^{k}} v/\log x$ is always greater than $x^{1/20}$ on our range of $k$, we conclude that for $x^{1/20} \leq v \leq x$,
\begin{equation}
\label{lambdasquareplarge0}
    \sum_{\substack{m \leq v, \\ p | m \Rightarrow x^{e^{-(k+1)}} \leq p \leq x^{e^{-k}}}} \lambda^2(m)  \ll 2^{-e^{k}} \frac{v}{\log x}.
\end{equation}
The above and partial summation render
\begin{align}
\label{EEjremainder21-1}
\begin{split}
  \sum_{\substack{x^{1/10} < m \leq x, \\ p | m \Rightarrow x^{e^{-(k+1)}} \leq p \leq x^{e^{-k}}}} \lambda^2(m)\Big( \frac x{m} \Big)^{3/5} \ll 2^{-e^{k}} \frac{x}{\log x}.
\end{split}
\end{align}
  On the other hand, \eqref{lambdabound} and summing trivially give that, for $0 \leq k \leq \mathcal{K}$,
\begin{align}
\label{EEjremainder22-2}
\begin{split}
   \sum_{\substack{1  < m \leq x^{1/10}, \\ p | m \Rightarrow x^{e^{-(k+1)}} \leq p \leq x^{e^{-k}}}} \lambda^2(m)\Big( \frac x{m} \Big)^{3/5} \ll  x^{3/5}x^{1/10} \ll  2^{-e^{k}} \frac{x}{\log x}.
\end{split}
\end{align}

Thus, we deduce from \eqref{EEjremainder21-1} and \eqref{EEjremainder22-2} that for $0 \leq k \leq \mathcal{K}$,
\begin{align}
\label{EEjremainder23}
\begin{split}
   \sum_{\substack{1  < m \leq x, \\ p | m \Rightarrow x^{e^{-(k+1)}} \leq p \leq x^{e^{-k}}}} \lambda^2(m) \Big(\frac x{m}\Big)^{3/5}  \ll 2^{-e^{k}} \frac{x}{\log x}.
\end{split}
\end{align}

From \eqref{Erandnlarge0}--\eqref{EEjremainder10} and \eqref{EEjremainder23} that
\begin{align*}
%%\label{Enlarge}
\begin{split}
E_2 \ll  \Biggl(2^{-e^{k}} \frac{x}{\log x} \Biggr)^{q}.
\end{split}
\end{align*}

  Upon taking the $2q$-th root and summing over $0 \leq k \leq \mathcal{K}$, we see that
\begin{align*}
%%\label{sumE2}
 \sum_{0 \leq k \leq \mathcal{K}} E_2^{1/(2q)} \ll \sqrt{\frac{x}{\log x}}.
\end{align*}

   In view of the above and \eqref{EPndecomp1}, we see that in order to establish our proposition, it remains to show that
\begin{align}
\label{sumE1}
\begin{split}
 \sum_{0 \leq k \leq \mathcal{K}} E_1^{1/(2q)} \ll
\begin{cases}
\displaystyle \sqrt{\frac{x}{\log x}} \sum_{0 \leq k \leq \mathcal{K}} \Big\| \int\limits_{-1/2}^{1/2} |F_{k,f}( \tfrac12 - \tfrac{k}{\log x} + it)|^2 \dif t \Big\|_{q}^{1/2}, \quad &  \text{if $h$ is Steinhaus}, \\
\displaystyle \sqrt{\frac{x}{\log x}} \sum_{0 \leq k \leq \mathcal{K}} \max_{N \in \mz} \frac{1}{(|N|+1)^{1/8}} \Big\| \int\limits_{N-1/2}^{N+1/2} |F_{k,f}( \tfrac12 - \tfrac{k}{\log x} + it)|^2 \dif t  \Big\|_{q}^{1/2} , \quad & \text{if $h$ is Rademacher}.
\end{cases}
\end{split}
\end{align}

  To this end, we interchange the sum and integral to arrive at
\begin{align}
\label{Erandnlarge00}
E_1 \ll  \E \Biggl( \int_{x^{e^{-(k+1)}}}^{x} \Biggl| \sum_{\substack{n \leq x/t, \\ x^{e^{-(k+1)}} \text{-smooth}}} h(n)\lambda(n) \Biggr|^2 \sum_{\substack{t/(1+1/X) \leq m \leq t , \\ p|m \Rightarrow x^{e^{-(k+1)}} < p \leq x^{e^{-k}}}} \frac{\lambda^2(m)X}{m} \dif t \Biggr)^{q}.
\end{align}

    We now estimate the expression
$$\sum_{\substack{t/(1+1/X) \leq m \leq t , \\ p|m \Rightarrow x^{e^{-(k+1)}} < p \leq x^{e^{-k}}}} \lambda^2(m)$$
 by arguments similar to those in our discussions above that lead to the bound in \eqref{lambdasquareplarge0}. In this process, we need to apply Number Theory Result 1 in \cite{Harper20}, setting the parameters there $v=t$, $u=t/(1+1/X)$, $y=x^{e^{-(k+1)}}$, $z=x^{e^{-k}}$, $\delta = 1/2$.  In our case the condition $1<u<v(1-y^{-1/2})$ needed by Number Theory Result 1 in \cite{Harper20} is satisfied as $X=e^{b_4\sqrt{\log x}/4}$. It follows that for $t \leq x$,
\begin{align*}
  \sum_{\substack{t/(1+1/X) \leq m \leq t , \\ p|m \Rightarrow x^{e^{-(k+1)}} < p \leq x^{e^{-k}}}} \lambda^2(m) \ll \frac{2^{-e^{k}}}{\log x} \Big(t-\frac {t}{1+1/X} \Big) \ll \frac{t}{X\log t}.
\end{align*}

Consequently, for $t \leq x$,
\begin{align*}
  \sum_{\substack{t/(1+1/X) \leq m \leq t , \\ p|m \Rightarrow x^{e^{-(k+1)}} < p \leq x^{e^{-k}}}} \frac{\lambda^2(m)X}{m(1+1/X)} \ll \frac{X}{t(1+1/X)}\sum_{\substack{t/(1+1/X) \leq m \leq t , \\ p|m \Rightarrow x^{e^{-(k+1)}} < p \leq x^{e^{-k}}}} \lambda^2(m) \ll \frac 1{\log t}.
\end{align*}

From this and \eqref{Erandnlarge00},
\begin{align*}
%%\label{Erandnlarge0000}
\begin{split}
 E_1 
 %=& \E \Biggl( \sum_{\substack{1 < m \leq x , \\ p|m \Rightarrow x^{e^{-(k+1)}} < p \leq x^{e^{-k}}}} \frac{\lambda^2(m)X}{m} \int_{m}^{m(1 + \frac{1}{X})} \Biggl| \sum_{\substack{n \leq x/t, \\ x^{e^{-(k+1)}} \text{-smooth}}} h(n)\lambda(n) \Biggr|^2 dt \Biggr)^{q} \\
\ll & \E \Bigl( \int\limits_{x^{e^{-(k+1)}}}^{x} \Bigl| \sum_{\substack{n \leq x/t, \\ x^{e^{-(k+1)}} \text{-smooth}}} h(n)\lambda(n) \Bigr|^2 \frac{\dif t}{\log t} \Bigr)^q.
\end{split}
\end{align*}
  We proceed from here by following the arguments in the proof of \cite[Proposition 1]{Harper20}.  This lead to \eqref{sumE1} and completes the proof.
\end{proof}

\subsection{The lower bounds}
\label{lowerbounds}

 We write $F_f$ for the function $F_{-1,f}$ defined in Section \ref{secupperbounds}, so that $F_f$ denotes the partial Euler product of $h(n)\lambda(n)$ over $x$-smooth numbers, for $h$ being either a Steinhaus or a Rademacher random multiplicative function.  The lower bounds for $\E|\sum_{n \leq x} h(n)\lambda(n)|^{2q}$, analogous to \cite[Propositions 3--4]{Harper20}, is the following.
\begin{prop}
\label{prop3}
 With the notation as above, let $h(n)$ be a Steinhaus or a Rademacher random multiplicative function. There exists a large absolute constant $C > 0$ such that, uniformly for all $2/3 \leq q \leq 1$ and large real $x$,
\begin{align}
\label{hlambdalower}
\begin{split}
 \Big\| \sum_{n \leq x} h(n)\lambda(n) \Big\|_{2q} \gg \sqrt{\frac{x}{\log x}} \Big\| \int\limits_{1}^{\sqrt{x}} \Big|\sum_{m \leq z} h(m)\lambda(m) \Big|^2 \frac{\dif z}{z^{2}} \Big\|_{q}^{1/2} - C\sqrt{\frac{x}{\log x}} .
\end{split}
\end{align}

In particular, for any large fixed constant $V$ and Steinhaus random multiplicative function $h(n)$, we have
\begin{align}
\label{hlambdalower1}
\begin{split}
 \Big\| \sum_{n \leq x} h(n)\lambda(n) \Big\|_{2q} \gg \sqrt{\frac{x}{\log x}} \Bigl( \Big\| \int\limits_{-1/2}^{1/2} |F_f(\tfrac12 + \tfrac{4V}{\log x} + it)|^2 \dif t \Big\|_{q}^{1/2} - \frac{C}{e^V} \Big\| \int\limits_{-1/2}^{1/2} |F_f(\tfrac12 + \tfrac{2V}{\log x} + it)|^2 \dif t \Big\|_{q}^{1/2}  - C \Bigr).
\end{split}
\end{align}
 Similarly,  any Rademacher random multiplicative function $h(n)$, we have
\begin{align}
\label{hlambdalower2}
\begin{split}
\Big\| \sum_{n \leq x} h(n)\lambda(n) \Big\|_{2q}  \gg  \sqrt{\frac{x}{\log x}} & \Bigl( \Big\| \int\limits_{-1/2}^{1/2} |F_f( \tfrac12 + \tfrac{4V}{\log x} + it)|^2 \dif t \Big\|_{q}^{1/2}  \\
& - \frac{C}{e^V} \max_{N \in \mz} \frac{1}{(|N|+1)^{1/8}} \Big\| \int\limits_{N-1/2}^{N+1/2} |F_f(\tfrac12 + \tfrac{2V}{\log x} + it)|^2 \dif t \Big\|_{q}^{1/2}  - C \Bigr) .
\end{split}
\end{align}
\end{prop}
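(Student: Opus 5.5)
We adapt the proof of \cite[Propositions 3--4]{Harper20}, replacing $h(n)$ by $h(n)\lambda(n)$ and using the arithmetic input from Section \ref{sec 2}. Only primes in $(\sqrt{x},x]$ are used in the decomposition.

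\emph{Step 1: reduction to a conditional mean square.} Let $\E'$ denote expectation conditional on $(h(p))_{p\le \sqrt{x}}$. Write
\[ \sum_{n\le x}h(n)\lambda(n)=\sum_{\sqrt{x}<p\le x}h(p)\lambda(p)\sum_{m\le x/p}h(m)\lambda(m)+\sum_{\substack{n\le x\\ P(n)\le \sqrt x}}h(n)\lambda(n), \]
which holds because any $n\le x$ has at most one prime factor exceeding $\sqrt x$. Conditionally on $\E'$, the first sum is a linear form in the independent variables $h(p)$, $\sqrt x<p\le x$, with coefficients $\lambda(p)a_p$, where $a_p=\sum_{m\le x/p}h(m)\lambda(m)$ is measurable with respect to the small primes.

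We apply the Khintchine-type bound of Lemma \ref{Khintchine}. Conditioning on the small primes and on the smooth part, we want $\E'|\cdot|^{2q}\gg(\sum_p\lambda^2(p)|a_p|^2)^q$. Lemma \ref{Khintchine} gives this only for exponents $m>1$, while $2q\ge 4/3>1$, so that is fine. The smooth part must be handled separately; it is not a sum over independent $h(p)$. Following Harper, we use Minkowski's inequality: the $L^{2q}$ norm of the full sum is at least the $L^{2q}$ norm of the large-prime part minus that of the smooth part. By Hölder's inequality and orthogonality, the smooth part is bounded by $(\sum_{n\le x,P(n)\le\sqrt x}\lambda^2(n))^{1/2}\ll \sqrt{x}$. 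This is too large, so we need the refined version. In Harper's argument one instead takes the lower bound over $\sqrt{x}<p\le x$ via the conditional expectation, treating $\sum_{n\le x}h(n)\lambda(n)$ as a sum of independent symmetric terms conditionally. The symmetry of the distribution of $h(p)$ (multiplying by a random sign) shows that $\E'|A+B|^{2q}\gg \E'|A|^{2q}$, where $A$ is the large-prime part. This avoids subtracting the smooth part.

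This gives
\[ \Big\|\sum_{n\le x}h(n)\lambda(n)\Big\|_{2q}\gg \Big\|\sum_{\sqrt x<p\le x}\lambda^2(p)\Big|\sum_{m\le x/p}h(m)\lambda(m)\Big|^2\Big\|_q^{1/2}. \]

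\emph{Step 2: replacing primes by an integral.} Next we replace $\sum_p\lambda^2(p)|a_p|^2$ by $\int_{\sqrt x}^x |\sum_{m\le x/t}|^2\,dt/\log t$. We smooth over short intervals $[p,p(1+1/X)]$ with $X=e^{b_4\sqrt{\log x}/4}$, exactly as in the upper bound proof, and use \eqref{merten3}/\eqref{merten4} for the prime density of $\lambda^2(p)$. The error from the difference $|\sum_{m\le x/p}|^2-|\sum_{m\le x/t}|^2$ is handled by mean squares and \eqref{lambdasquareasymp}, contributing the $-C\sqrt{x/\log x}$ term via Minkowski's inequality. The substitution $z=x/t$ and $\log t\asymp\log x$ then yield \eqref{hlambdalower}.

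\emph{Step 3: passage to Euler products.} For \eqref{hlambdalower1} and \eqref{hlambdalower2}, we lower bound $\int_1^{\sqrt x}|\sum_{m\le z}|^2z^{-2}dz$ by $\int_1^{\sqrt x}|\cdot|^2 z^{-2-8V/\log x}dz$. We extend the range to $\infty$, with the tail $\int_{\sqrt x}^\infty$ bounded using the weight $z^{-8V/\log x}\le e^{-4V}$ times the same integral at exponent $4V/\log x$. We then apply Lemma \ref{parseval} with $\sigma=4V/\log x$ to obtain $\int|F(\frac12+\sigma+it)|^2/|\frac12+\sigma+it|^2dt$, where $F$ is the Dirichlet series of $h(m)\lambda(m)$ over all $m$. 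The integrand is compared with that over $|t|\le1/2$.

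Contributions of $m$ with a prime factor $>x$ are removed as in Harper, giving $F_f$. The tail piece, after Parseval, is bounded by $e^{-V}$ times the shifted integral at $2V/\log x$ summed over unit intervals $[N-\frac12,N+\frac12]$ with weights $1/(N^2+1)$. In the Steinhaus case, translation invariance in law ($h(p)\mapsto h(p)p^{-iN}$) lets all $N$ be replaced by $N=0$. In the Rademacher case we keep $\max_N (|N|+1)^{-1/8}$.

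The main obstacle is Step 1: getting the conditional lower bound without losing the smooth part. This requires the symmetrization argument combined with Lemma \ref{Khintchine} in the $\lambda$-weighted form.
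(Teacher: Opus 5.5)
Your proposal is correct and essentially follows the paper's route. The shared steps are: random-sign symmetrization plus the conditional Khintchine bound over primes in $(\sqrt{x},x]$; smoothing over $[p,p(1+1/X)]$ using the prime number theorem for $\lambda^2(p)$ and Rankin--Selberg mean squares; then Parseval with a unit-interval decomposition in $t$. Your use of Minkowski's inequality in $L^2(\mu)$ and $L^{2q}$ for the smoothing error is a harmless variant of the paper's pointwise bound $|a+b|^2\geq \frac14|a|^2-\min(|b|^2,|a/2|^2)$ combined with subadditivity of $u\mapsto u^q$.

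Two small slips need fixing. First, for $z\geq\sqrt{x}$ the correct bound is $z^{-8V/\log x}\leq e^{-2V}z^{-4V/\log x}$, not $e^{-4V}$. Second, the restriction to $x$-smooth $m$ must be made before extending the $z$-integral to $\infty$; this costs nothing since $m\leq z\leq\sqrt{x}$. Otherwise Parseval produces the full random Dirichlet series rather than $F_f$.
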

\begin{proof}
Recall that $P(n)$ is the largest prime factor of $n$. We let $\epsilon$ be an auxiliary Rademacher random variable that is independent of everything else.  Using arguments similar to those in \cite[section 2.2]{HNR15}, we get, for $2/3 \leq q \leq 1$,
\begin{align}
\label{Elowerbound}
\begin{split}
\E \Biggl|\sum_{\substack{n \leq x, \\ P(n) > \sqrt{x}}} h(n)\lambda(n) \Biggr|^{2q}  = & \frac{1}{2^{2q}} \E \Biggl|\sum_{\substack{n \leq x, \\ P(n) > \sqrt{x}}} h(n)\lambda(n) + \sum_{\substack{n \leq x, \\ P(n) \leq \sqrt{x}}} h(n)\lambda(n) + \sum_{\substack{n \leq x, \\ P(n) > \sqrt{x}}} h(n)\lambda(n) - \sum_{\substack{n \leq x, \\ P(n) \leq \sqrt{x}}} h(n)\lambda(n) \Biggr|^{2q}  \\
\leq & \E \Biggl|\sum_{\substack{n \leq x, \\ P(n) > \sqrt{x}}} h(n)\lambda(n) + \sum_{\substack{n \leq x, \\ P(n) \leq \sqrt{x}}} h(n)\lambda(n) \Biggr|^{2q} + \E \Biggl|\sum_{\substack{n \leq x, \\ P(n) > \sqrt{x}}} h(n)\lambda(n) - \sum_{\substack{n \leq x, \\ P(n) \leq \sqrt{x}}} h(n)\lambda(n) \Biggr|^{2q}  \\
 = & 2 \E \Biggl|\epsilon \sum_{\substack{n \leq x, \\ P(n) > \sqrt{x}}} h(n)\lambda(n) + \sum_{\substack{n \leq x, \\ P(n) \leq \sqrt{x}}} h(n)\lambda(n) \Biggr|^{2q} = 2 \E|\sum_{n \leq x} h(n)\lambda(n)|^{2q} ,
\end{split}
\end{align}
 where the last estimation above follows by observing that the law of 
\begin{equation} \label{decompnsum}
\epsilon \sum_{\substack{n \leq x, \\ P(n) > \sqrt{x}}} h(n)\lambda(n) = \epsilon \sum_{\sqrt{x} < p \leq x} h(p)\lambda(p) \sum_{m \leq x/p} h(m)\lambda(m),
\end{equation}
conditional on the values $(h(p))_{p \leq \sqrt{x}}$ is the same as the law of $\sum_{\substack{n \leq x, \\ P(n) > \sqrt{x}}} h(n)\lambda(n)$. We recast the bound obtained in \eqref{Elowerbound} as
$$ \Big\| \sum_{n \leq x} h(n)\lambda(n) \Big\|_{2q} \gg \Big\| \sum_{\substack{n \leq x, \\ P(n) > \sqrt{x}}} h(n)\lambda(n) \Big\|_{2q} . $$

Observe that when we decompose the sum over $n$ in \eqref{decompnsum}, the inner sums on the right-hand side are determined by $(h(p))_{p \leq \sqrt{x}}$, which are independent of the values $(h(p))_{\sqrt{x} < p \leq x}$. Therefore, writing $\mathbb{E}^{(\sqrt{x})}$ the conditional expectation with respect to $(h(p))_{p \leq \sqrt{x}}$ and applying \eqref{KTlower}, we get that
\begin{align}
\label{Elowercond}
\begin{split}
  \E \Big|\sum_{\substack{n \leq x, \\ P(n) > \sqrt{x}}} & h(n)\lambda(n) \Big|^{2q} = \E\mathbb{E}^{(\sqrt{x})} \Big|\sum_{\sqrt{x} < p \leq x} h(p)\lambda(p) \sum_{m \leq x/p} h(m)\lambda(m)\Big|^{2q} \\
\gg & \E \Big(\sum_{\sqrt{x} < p \leq x} \lambda^2(p) \Big|\sum_{m \leq x/p} h(m)\lambda(m) \Big|^2 \Big)^{q} \geq  \frac{1}{\log^{q}x} \E \Big(\sum_{\sqrt{x} < p \leq x} \lambda^2(p)\log p \Big|\sum_{m \leq x/p} h(m)\lambda(m) \Big|^2 \Big)^{q} .
\end{split}
\end{align}

  Recall that $X = e^{b_4\sqrt{\log x}/4}$.  Using the inequality $|a + b|^2 \geq (1/4)|a|^2 - \min (|b|^2, |a/2|^2 ) \geq 0$ for any real numbers $a,b$, we get that
\begin{align*}
\sum_{\sqrt{x} < p \leq x} & \lambda^2(p) \log p \Bigl|\sum_{m \leq x/p} h(m)\lambda(m) \Bigr|^2 = \sum_{\sqrt{x} < p \leq x} \lambda^2(p) \frac{\log p}{p} X \int\limits_{p}^{p(1+1/X)} \Bigl|\sum_{m \leq x/p} h(m)\lambda(m) \Bigr|^2 \dif t \\
& \geq \frac{1}{4} \sum_{\sqrt{x} < p \leq x} \lambda^2(p) \frac{\log p}{p}X \int_{p}^{p(1+1/X)} \Bigl|\sum_{m \leq x/t} h(m)\lambda(m) \Bigr|^2 \dif t \\
& \hspace*{2cm} - \sum_{\sqrt{x} < p \leq x} \lambda^2(p) \frac{\log p}{p}X \int\limits_{p}^{p(1+1/X)} \min\Big (\Bigl|\sum_{x/t < m \leq x/p} h(m)\lambda(m) \Bigr|^2, \frac{1}{4} \Bigl|\sum_{m \leq x/t} h(m)\lambda(m) \Bigr|^2 \Big ) \dif t .
\end{align*}

  We deduce from \eqref{Elowercond} and the above that for $2/3 \leq q \leq 1$,
\begin{align}
\label{Elower}
\begin{split}
 \E \Big|\sum_{\substack{n \leq x, \\ P(n) > \sqrt{x}}} h(n)\lambda(n) \Big|^{2q} \gg & \frac{1}{\log^{q}x} \E \Bigl( \frac{1}{4} \sum_{\sqrt{x} < p \leq x} \lambda^2(p) \frac{\log p}{p}X \int\limits_{p}^{p(1+1/X)} \Bigl|\sum_{m \leq x/t} h(m)\lambda(m) \Bigr|^2 \dif t \Biggr)^{q} \\
& \hspace*{1cm} - \frac{1}{\log^{q}x} \E\Bigl( \sum_{\sqrt{x} < p \leq x} \lambda^2(p) \frac{\log p}{p}X \int\limits_{p}^{p(1+1/X)} \Bigl|\sum_{x/t < m \leq x/p} h(m)\lambda(m) \Bigr|^2 \dif t \Bigr)^{q}.
\end{split}
\end{align}

Now, H\"{o}lder's inequality leads to
\begin{align}
\label{EPmlarge}
\begin{split}
\E\Bigl( \sum_{\sqrt{x} < p \leq x} & \lambda^2(p) \frac{\log p}{p}X \int\limits_{p}^{p(1+1/X)} \Bigl|\sum_{x/t < m \leq x/p} h(m)\lambda(m) \Bigr|^2 \dif t \Bigr)^q \\
 \leq & \Big(\sum_{\sqrt{x} < p \leq x} \lambda^2(p)\frac{\log p}{p}X \int\limits_{p}^{p(1+1/X)} \E \Bigl|\sum_{x/t < m \leq x/p} h(m)\lambda(m) \Bigr|^2 \dif t \Big)^q \\
 = & \Big(\sum_{\sqrt{x} < p \leq x} \lambda^2(p)\frac{\log p}{p}X \int\limits_{p}^{p(1+1/X)} \sum_{x/t < m \leq x/p}\lambda^2(m) \dif t \Big)^q \\
\ll & \Big( \sum_{\sqrt{x} < p \leq x} \lambda^2(p)\frac{\log p}{p}X \int\limits_{p}^{p(1+1/X)} \frac xp-\frac xt+\Big(\frac x{p}\Big)^{3/5}  \dif t \Big)^q ,
\end{split}
\end{align}
 where the last inequality above follows from \eqref{lambdasquareasymp}.  If $p \leq t \leq p(1+1/X)$, then $\frac xp-\frac xt \leq \frac xp-\frac x{p(1+1/X)} \ll x/(pX)$. It follows that the last expression in \eqref{EPmlarge} is
\begin{align}
\label{EPmlarge1}
\begin{split}
% & \Big( \sum_{\sqrt{x} < p \leq x} \lambda^2(p)\frac{\log p}{p}X \int\limits_{p}^{p(1+1/X)} \frac xp-\frac xt+\left(\frac x{p}\right)^{3/5} \dif t \Big)^q \\
\ll \Big( \sum_{\sqrt{x} < p \leq x} \lambda^2(p)\frac{\log p}{p}X \int\limits_{p}^{p(1+1/X)} \frac{x}{pX}+\Big(\frac x{p}\Big)^{3/5}  \dif t \Big)^q \ll \Big( \sum_{\sqrt{x} < p \leq x} \lambda^2(p)(\log p) \Big(\frac{x}{pX}+\left(\frac x{p}\right)^{3/5} \Big) \Big)^q.
\end{split}
\end{align}

From \eqref{merten3} and partial summation, we get
\begin{align*}
%%\label{EPnsmall}
\begin{split}
\frac{x}{X}\sum_{\sqrt{x} < p \leq x} \lambda^2(p) \frac{\log p}{p} \ll \frac {x\log x}{X} \ll x, \quad \mbox{and} \quad \sum_{\sqrt{x} < p \leq x} \lambda^2(p)\log p \left(\frac x{p}\right)^{3/5} \ll x.
\end{split}
\end{align*}

  We derive from \eqref{EPmlarge}, \eqref{EPmlarge1} and the above that
\begin{align}
\label{EPmlarge2}
\begin{split}
 & \E \Bigl( \sum_{\sqrt{x} < p \leq x} \lambda^2(p) \frac{\log p}{p}X \int\limits_{p}^{p(1+1/X)} \Bigl|\sum_{x/t < m \leq x/p} h(m)\lambda(m) \Bigr|^2 \dif t \Bigr)^q
\ll  x^q.
\end{split}
\end{align}

  It follows from \eqref{Elower} and \eqref{EPmlarge2} that
\begin{align}
\label{Elower1}
\begin{split}
 \E \Big|\sum_{\substack{n \leq x, \\ P(n) > \sqrt{x}}} h(n)\lambda(n) \Big|^{2q} \gg & \frac{1}{\log^{q}x} \E \Bigl(  \sum_{\sqrt{x} < p \leq x} \lambda^2(p) \frac{\log p}{p}X \int\limits_{p}^{p(1+1/X)} \Bigl|\sum_{m \leq x/t} h(m)\lambda(m) \Bigr|^2 \dif t \Bigr)^{q} - \frac{x^q}{\log^{q}x} \\
=& \frac{1}{\log^{q}x}\E \Big( \int\limits_{\sqrt{x}}^{x(1+1/X)} \sum_{t/(1 + 1/X) < p \leq t} \lambda^2(p) \frac{\log p}{p}X \Big|\sum_{m \leq x/t} h(m)\lambda(m) \Big|^2 \dif t \Big)^q - \frac{x^q}{\log^{q}x} \\
\geq & \frac{1}{\log^{q}x}\E\Big( \int\limits_{\sqrt{x}}^{x} \sum_{t/(1 + 1/X) < p \leq t} \lambda^2(p) \frac{\log p}{p}X \Big|\sum_{m \leq x/t} h(m)\lambda(m) \Big|^2 \dif t \Big)^q - \frac{x^q}{\log^{q}x}.
\end{split}
\end{align}

   Note that by \eqref{merten3} and partial summation, we have
\begin{align*}
%%\label{Elower1}
\begin{split}
\sum_{t/(1 + 1/X) < p \leq t} & \lambda^2(p) \frac{\log p}{p}X \\
=& X \Big(\log t-\log \Big(\frac {t}{1 + 1/X}\Big)\Big) +O\Big(X\exp(-b_4\sqrt{\log t/(1 + 1/X)})\Big(1+\Big(\log t-\log \Big(\frac {t}{1 + 1/X}\Big)\Big)\Big) \gg  1.
\end{split}
\end{align*}

  It follows from \eqref{Elower1} and the above that
\begin{align*}
%%\label{Elower2}
\begin{split}
\E|\sum_{\substack{n \leq x, \\ P(n) > \sqrt{x}}} h(n)\lambda(n) |^{2q} \gg & \frac{1}{\log^{q}x} \E \Big( \int\limits_{\sqrt{x}}^{x} \Big|\sum_{m \leq x/t} h(m)\lambda(m) \Big|^2 \dif t \Big)^q- \frac{x^q}{\log^{q}x}   \\
 = & \frac{x^{q}}{\log^{q}x} \E\Big( \int\limits_{1}^{\sqrt{x}} \Big|\sum_{m \leq z} h(m)\lambda(m) \Big|^2 \frac{\dif z}{z^{2}} \Big)^q- \frac{x^q}{\log^{q}x}.
\end{split}
\end{align*}

From this, for some constant $C'>0$, we have
\begin{align}
\label{Elower3}
\begin{split}
\E \Big| \sum_{\substack{n \leq x, \\ P(n) > \sqrt{x}}} h(n)\lambda(n) \Big|^{2q}+ C'\frac{x^q}{\log^{q}x} \gg & \frac{x^{q}}{\log^{q}x} \E \Big( \int\limits_{1}^{\sqrt{x}} \Big|\sum_{m \leq z} h(m)\lambda(m) \Big|^2 \frac{\dif z}{z^{2}} \Big)^q.
\end{split}
\end{align}
 The above implies that
\begin{align}
\label{Elower4}
\begin{split}
\Big (\E\Big|\sum_{\substack{n \leq x, \\ P(n) > \sqrt{x}}} h(n)\lambda(n) \Big|^{2q}+ C'\frac{x^q}{\log^{q}x}\Big )^{1/2q} \gg & \sqrt{\frac{x}{\log x}} \Big\| \int\limits_{1}^{\sqrt{x}} \Big|\sum_{m \leq z} h(m)\lambda(m) \Big|^2 \frac{\dif z}{z^{2}} \Big\|_{q}^{1/2}.
\end{split}
\end{align}
  Note that we have
\begin{align}
\label{Elower5}
\begin{split}
\Big (\E \Big|\sum_{\substack{n \leq x, \\ P(n) > \sqrt{x}}} h(n)\lambda(n) \Big|^{2q}+ C'\frac{x^q}{\log^{q}x}\Big )^{1/2q} \leq & \Big (2\max \Big( \E \Big|\sum_{\substack{n \leq x, \\ P(n) > \sqrt{x}}} h(n)\lambda(n) \Big|^{2q}, C'\frac{x^q}{\log^{q}x}\Big)\Big )^{1/2q}  \\
\leq & 2^{1/2q} \Big\| \sum_{n \leq x} h(n)\lambda(n) \Big\|_{2q}+ (2C')^{1/2q}\sqrt{\frac{x}{\log x}}.
\end{split}
\end{align}
 Now \eqref{hlambdalower} readily follows from \eqref{Elower4} and \eqref{Elower5}. \newline

Ffor \eqref{hlambdalower1}, note first that
\begin{align}
\label{Esmooth}
\begin{split}
\E\left( \int\limits_{1}^{\sqrt{x}} \left|\sum_{m \leq z} h(m)\lambda(m) \right|^2 \frac{\dif z}{z^{2}} \right)^q = \E\left( \int\limits_{1}^{\sqrt{x}} \left|\sum_{\substack{m \leq z, \\ x \text{-smooth}}} h(m)\lambda(m) \right|^2 \frac{\dif z}{z^{2}} \right)^q,
\end{split}
\end{align}
 as $z \leq \sqrt{x}$ in the integrals above. It follows that, for any fixed large constant $V$,
\begin{align*}
%%\label{Elower5}
\begin{split}
 \E\left( \int\limits_{1}^{\sqrt{x}} \left|\sum_{m \leq z} h(m)\lambda(m) \right|^2 \frac{\dif z}{z^{2}} \right)^q \geq \E\left( \int\limits_{1}^{\sqrt{x}} \left|\sum_{\substack{m \leq z, \\ x \text{-smooth}}} h(m)\lambda(m) \right|^2 \frac{\dif z}{z^{2 + 8V/\log x}} \right)^q.
\end{split}
\end{align*}

  As $0<q \leq 1$, we use $(\sum^{\infty}_{i=1}a_i)^q \leq \sum^{\infty}_{i=1}a^q_i$ for convergent series $\sum^{\infty}_{i=1}a_i$ with $a_i \geq 0$ to the last expression above, getting that it is 
\begin{align}
\label{Esmoothdif}
\begin{split}
% & \E\left( \int_{1}^{\sqrt{x}} \left|\sum_{m \leq z} h(m)\lambda(m) \right|^2 \frac{dz}{z^{2}} \right)^q \\
 \geq & \E\left( \int\limits_{1}^{\infty} \left|\sum_{\substack{m \leq z, \\ x \text{-smooth}}} h(m)\lambda(m) \right|^2 \frac{\dif z}{z^{2 + 8V/\log x}} \right)^q - \E\left( \int\limits_{\sqrt{x}}^{\infty} \left|\sum_{\substack{m \leq z, \\ x \text{-smooth}}} h(m)\lambda(m) \right|^2 \frac{\dif z}{z^{2 + 8V/\log x}} \right)^q \\
 \geq & \E\left( \int\limits_{1}^{\infty} \left|\sum_{\substack{m \leq z, \\ x \text{-smooth}}} h(m)\lambda(m) \right|^2 \frac{\dif z}{z^{2 + 8V/\log x}} \right)^q - \frac{1}{e^{2Vq}} \E\left( \int\limits_{1}^{\infty} \left|\sum_{\substack{m \leq z, \\ x \text{-smooth}}} h(m)\lambda(m) \right|^2 \frac{\dif z}{z^{2 + 4V/\log x}} \right)^q .
\end{split}
\end{align}
Now Lemma \ref{parseval} gives that if $x$ is large enough, then for $j=1,2$,
\begin{align}
\label{EParseval}
\begin{split}
 & \int\limits_{1}^{\infty} \left|\sum_{\substack{m \leq z, \\ x \text{-smooth}}} h(m)\lambda(m) \right|^2 \frac{\dif z}{z^{2 + 4jV/\log x}}= \int\limits_{-\infty}^{\infty} \frac{|F_f(\frac 12 + \frac{2jV}{\log x} + it)|^2}{|\frac 12 + \frac{2jV}{\log x} + it|^2} \dif t.
\end{split}
\end{align}

 Note that
\begin{align}
\label{Elargelowerbound}
\begin{split}
 \int\limits_{-\infty}^{\infty} \frac{|F_f(\frac 12 + \frac{2jV}{\log x} + it)|^2}{|\frac 12 + \frac{2jV}{\log x} + it|^2} \dif t =\sum^{\infty}_{N=-\infty} \int\limits_{N-1/2}^{N+1/2}  \frac{|F_f(\frac 12 + \frac{2jV}{\log x} + it)|^2}{|\frac 12 + \frac{2jV}{\log x} + it|^2} \dif t \gg \int\limits_{-1/2}^{1/2} |F_f(\tfrac 12 + \tfrac{2jV}{\log x} + it)|^2 \dif t.
\end{split}
\end{align}

  For the Steinhaus case, the translation invariance that $\int_{N-1/2}^{N+1/2}  |F_f(\frac 12 + \frac{2jV}{\log x} + it)|^2 \dif t=\int_{-1/2}^{1/2}  |F_f(\frac 12 + \frac{2jV}{\log x} + it)|^2 \dif t$ gives
\begin{align}
\label{EsmalllowerS}
\begin{split}
 \int\limits_{-\infty}^{\infty} \frac{|F_f(\frac 12 + \frac{2jV}{\log x} + it)|^2}{|\frac 12 + \frac{2jV}{\log x} + it|^2} \dif t \ll & \sum^{\infty}_{N=-\infty} \frac {1}{(|N|+1)^2} \int\limits_{N-1/2}^{N+1/2}  |F_f(\tfrac 12 + \tfrac{2jV}{\log x} + it)|^2 \dif t \\
= &  \int\limits_{-1/2}^{1/2}  |F_f(\tfrac 12 + \tfrac{2jV}{\log x} + it)|^2 \dif t \sum^{\infty}_{N=-\infty} \frac {1}{(|N|+1)^2} \ll \int\limits_{-1/2}^{1/2}  |F_f(\tfrac 12 + \tfrac{2jV}{\log x} + it)|^2 \dif t.
\end{split}
\end{align}

   For the Rademacher case, we have
\begin{align}
\label{EsmalllowerR}
\begin{split}
 \int\limits_{-\infty}^{\infty} \frac{|F_f(\frac 12 + \frac{2jV}{\log x} + it)|^2}{|\frac 12 + \frac{2jV}{\log x} + it|^2} \dif t \ll & \max_{N \in \mz} \frac{1}{(|N|+1)^{q/4}} \int\limits_{N-1/2}^{N+1/2} |F_f(\tfrac12 + \tfrac{2jV}{\log x} + it)|^2  \dif t \sum^{\infty}_{N=-\infty} \frac {1}{(|N|+1)^{2-q/4}}\\
\ll & \max_{N \in \mz} \frac{1}{(|N|+1)^{q/4}} \int\limits_{N-1/2}^{N+1/2} |F_f(\tfrac12 + \tfrac{2jV}{\log x} + it)|^2  \dif t.
\end{split}
\end{align}

  We deduce from \eqref{Esmooth}--\eqref{EsmalllowerR} that for the Steinhaus case,
\begin{align}
\label{EsmoothestS}
\begin{split}
\E\left( \int\limits_{1}^{\sqrt{x}} \left|\sum_{m \leq z} h(m)\lambda(m) \right|^2 \frac{\dif z}{z^{2}} \right)^q \gg  \E\left( \int\limits_{-1/2}^{1/2}  |F_f(\tfrac 12 + \tfrac{2jV}{\log x} + it)|^2 \dif t \right)^q-e^{-2Vq} \E\left( \int\limits_{-1/2}^{1/2}  |F_f(\tfrac 12 + \tfrac{2jV}{\log x} + it)|^2 \dif t \right)^q.
\end{split}
\end{align}
Ffor the Rademacher case, we get
\begin{align}
\label{EsmoothestR}
\begin{split}
\E\left( \int\limits_{1}^{\sqrt{x}} \left|\sum_{m \leq z} h(m)\lambda(m) \right|^2 \frac{\dif z}{z^{2}} \right)^q \gg  \E &\left( \int\limits_{-1/2}^{1/2}  |F_f(\tfrac 12 + \tfrac{2jV}{\log x} + it)|^2 \dif t \right)^q \\
& -e^{-2Vq} \E\left( \max_{N \in \mz} \frac{1}{(|N|+1)^{q/4}} \int\limits_{N-1/2}^{N+1/2} |F_f(\tfrac12 + \tfrac{2jV}{\log x} + it)|^2 \dif t \right)^q.
\end{split}
\end{align}
 We now deduce from \eqref{hlambdalower}, \eqref{EsmoothestS} and argue as in \eqref{Elower3}--\eqref{Elower5}.  This leads to the bound in \eqref{hlambdalower1}. The estimate in \eqref{hlambdalower2} is proven similarly, using \eqref{EsmoothestR} instead. This completes the proof of the proposition.
\end{proof}

\section{Proof of Theorem \ref{lowerboundsfixedmodmean}}
\label{secmainupper}

\subsection{The upper bounds, Steinhaus}
\label{secmainupper1}

For each $|t| \leq 1/2$ and integers $0 \leq j \leq \log\log x - 2$, we define a sequence $t(j)$ iteratively by setting $t(-1) = t$ and
\begin{align}
\label{tjdef}
\begin{split}
 t(j) := \max\Big\{u \leq t(j-1): u = \frac{n}{((\log x)/e^{j+1}) \log((\log x)/e^{j+1})} \; \text{for some} \; n \in \mz \Big\} .
\end{split}
\end{align}

  It is shown in \cite[Section 4.1]{Harper20} that, for each $j$,
\begin{align}
\label{approxdist}
\begin{split}
|t-t(j)| \leq &  \frac{2}{((\log x)/e^{j+1}) \log((\log x)/e^{j+1})} .
\end{split}
\end{align}

  Let $B$ be the large fixed natural number from Proposition \ref{prop5}, and $\mathcal{G}(k)$ denote the event that for all $|t| \leq 1/2$ and all $k \leq j \leq \log\log x - B - 2$, we have
$$ \Biggl( \frac{\log x}{e^{j+1}} e^{g(x,j)} \Biggr)^{-1} \leq \prod_{l = j}^{\lfloor \log\log x \rfloor - B - 2} |I_{l}(1/2 - \frac{k}{\log x} + it(l))| \leq \frac{\log x}{e^{j+1}} e^{g(x,j)} , $$
where for a large constant $C$,
\begin{align}
\label{gdef}
\begin{split}
g(x,j) :=  C\min\{\sqrt{\log\log x}, \frac{1}{1-q} \} + 2\log\log(\frac{\log x}{e^{j+1}}).
\end{split}
\end{align}

  We recall the definition of $F_{k,f}$ from \eqref{FPdef} for any integer $-1 \leq k \leq \log\log x$. With the above notation, we observe by \eqref{merten1} and \eqref{Eprodsquare} that we have
\begin{align}
\label{EFsquare}
\begin{split}
\E |F_{k,f}(\tfrac12 - \tfrac{k}{\log x})|^2  = & \exp\Big (\sum_{p \leq x^{e^{-(k+1)}}} \frac{\lambda^2(p)}{p^{1 - 2k/\log x}} + O(1) \Big )  \\
= & \exp\Big (\sum_{p \leq x^{e^{-(k+1)}}} \frac{\lambda^2(p)}{p} + O\Big( \sum_{p \leq x^{e^{-(k+1)}}} \frac{k \lambda^2(p) \log p}{p \log x} + 1 \Big) \Big ) \ll \frac{\log x}{e^k}.
\end{split}
\end{align}
We further make use of \eqref{Eprodsquare} and follow the proofs of Key Propositions 1 and 2 in \cite{Harper20} to get the following two results.
\begin{prop}
\label{prop1}
With the natation as above, for all large $x$, and uniformly for $0 \leq k \leq \lfloor \log\log\log x \rfloor$ and $2/3 \leq q \leq 1$, we have
$$ \E\Big( \textbf{1}_{\mathcal{G}(k)} \int\limits_{-1/2}^{1/2} |F_{k,f}(\tfrac12 - \tfrac{k}{\log x} + it)|^2 \dif t \Big)^q \ll \Biggl( \frac{\log x}{e^{k}} C \min\{1, \frac{1}{(1-q)\sqrt{\log\log x}} \} \Biggr)^q.$$
\end{prop}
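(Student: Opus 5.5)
The plan is to follow the proof of Key Proposition 1 in \cite{Harper20}, replacing the Steinhaus Euler product by the degree-two product $F_{k,f}$, using \eqref{Eprodsquare} and Proposition \ref{prop5} as the arithmetic inputs. First, since $0<q\le 1$, Hölder's inequality gives
\[
\E\Big(\textbf{1}_{\mathcal{G}(k)}\int_{-1/2}^{1/2}|F_{k,f}|^2\dif t\Big)^q\le \Big(\int_{-1/2}^{1/2}\E\,\textbf{1}_{\mathcal{G}(k)}|F_{k,f}(\tfrac12-\tfrac{k}{\log x}+it)|^2\dif t\Big)^q .
\]
So it suffices to show that, uniformly in $|t|\le 1/2$,
\[
\E\,\textbf{1}_{\mathcal{G}(k)}|F_{k,f}(\tfrac12-\tfrac{k}{\log x}+it)|^2\ll \frac{\log x}{e^k}\,C\min\Big\{1,\frac{1}{(1-q)\sqrt{\log\log x}}\Big\}.
\]
The trivial bound $\frac{\log x}{e^k}$ already follows from \eqref{EFsquare} by translation invariance in $t$ of the Steinhaus law. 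We may therefore assume $(1-q)\sqrt{\log\log x}$ is large.

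Next, fix $t$ and drop all conditions in $\mathcal{G}(k)$ except the upper barrier for this $t$ at each $j$. Then factor $F_{k,f}=\prod_{l\ge k}I_l$, splitting off the finitely many ($\le B+2$) top blocks $l>\lfloor\log\log x\rfloor-B-2$ and the tiny primes $p\le 400$. These contribute $O(1)$ in expectation by independence and \eqref{Eprodsquare}. For the main blocks, change measure to $\tilde{\p}$ with $\sigma=-k/\log x$, shifted by $it$. By translation invariance, $|I_l(\cdot+it)|$ has the same law under this tilt as at $t=0$, so
\[
\E\,\textbf{1}_{\mathcal{G}}|F_{k,f}|^2=\E|F_{k,f}|^2\cdot\tilde{\p}(\text{barrier event}).
\]
The prefactor is $\ll \log x/e^k$ by \eqref{EFsquare}.

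The barrier event involves the points $t(l)$ rather than $t$. Here use \eqref{approxdist}: $|t-t(l)|$ is small enough that after shifting by $t$ the relevant $t_m=t(l_m)-t$ satisfy $|t_m|\le 1/(m^{2/3}e^{B+m+1})$ after reindexing $l_j=\lfloor\log\log x\rfloor-B-1-j$. This is the same verification as in \cite[Section 4.1]{Harper20}. We are then in the setting of the upper bound of Proposition \ref{prop5}, with $n\approx\log\log x-B-k$ and $a\approx g(x,\cdot)$. The barrier $\log(\log x/e^{j+1})+g(x,j)$ in cumulative form reads: partial sums of $\log|I_{l_m}|$ up to step $j$ are at most $a+j+O(\log j)$, with $a=C\min\{\sqrt{\log\log x},1/(1-q)\}$. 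The $2\log\log$ term is absorbed into $H(j)$, and the lower barrier $-a-Bj$ holds on $\mathcal{G}(k)$ as well. Proposition \ref{prop5} then gives probability $\ll \min\{1,a/\sqrt n\}\ll C\min\{1,\frac{1}{(1-q)\sqrt{\log\log x}}\}$, since $n\asymp\log\log x$ for $k\le\log\log\log x$.

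The main obstacle is the bookkeeping for this identification. One must check that the product over $l\ge j$ in $\mathcal{G}(k)$ indexes the random walk in the direction Proposition \ref{prop5} expects, i.e. summing from the large-prime end, matching $l_j$ decreasing. Equally, $\sigma=-k/\log x$ must satisfy $|\sigma|\le e^{-(B+n+1)}$, which requires choosing $n$ as $\log\log x-B-k-O(1)$. Finally, the $k$-dependent shift in $\sigma$ perturbs the Gaussian means in Lemma \ref{lem4} only by $O(1)$ in total, via \eqref{sumlambdasquareoverp}. If this indexing fails near the top blocks, I would truncate $n$ slightly, losing only constants.
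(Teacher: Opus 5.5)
Your proposal is correct and is essentially the paper's argument (the paper simply defers to Harper's proof of Key Proposition 1): apply H\"older, replace $\mathcal{G}(k)$ by the single-$t$ event, tilt by $|F_{k,f}|^2$ using translation invariance so that $t_m=t(l_m)-t$, bound $\E|F_{k,f}|^2$ by \eqref{EFsquare}, and apply Proposition \ref{prop5} with $n=\lfloor\log\log x\rfloor-B-1-k$. Two small corrections to your bookkeeping: since $l_1$ is the largest index, the walk in Proposition \ref{prop5} starts from the small-prime end (as do the products $\prod_{l\ge j}$ in $\mathcal{G}(k)$), and you need to keep both barriers of the single-$t$ event rather than only the upper one, which is in fact what you do at the end.
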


\begin{prop}
\label{prop2}
With the natation as above. For all large $x$, and uniformly for $0 \leq k \leq \lfloor \log\log\log x \rfloor$ and $2/3 \leq q \leq 1$, we have
$$ \p(\mathcal{G}(k) \; \text{fails}) \ll \exp \Big( -2C \min\Big\{\sqrt{\log\log x}, \frac{1}{1-q} \Big\} \Big). $$
\end{prop}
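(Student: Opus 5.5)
The plan is to follow the proof of Key Proposition 2 in \cite{Harper20}: a union bound over the finitely many scales $j$ and the discretised points $t(j)$, combined with the second-moment computation \eqref{Eprodsquare} and Chebyshev/Markov-type inequalities. The event $\mathcal{G}(k)$ can fail in two ways. The product $\prod_{l=j}^{\lfloor\log\log x\rfloor-B-2}|I_l(\tfrac12-\tfrac{k}{\log x}+it(l))|$ may exceed its upper threshold, or it may drop below the lower threshold $\big(\tfrac{\log x}{e^{j+1}}e^{g(x,j)}\big)^{-1}$. In both cases the key point is that $t(l)$ depends only on $t$ and $l$ and ranges over a discrete grid. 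For fixed $j$, as $t$ runs over $[-1/2,1/2]$, the tuple $(t(l))_{l\ge j}$ takes at most about $\frac{\log x}{e^{j+1}}\log\frac{\log x}{e^{j+1}}$ distinct values, because $t(j)$ determines the later $t(l)$ by the nested construction \eqref{tjdef}. So it suffices to bound the probability for one fixed tuple and multiply by this count.

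For fixed $j$ and fixed tuple, I would apply Markov's inequality to the second moment. For the upper deviation,
\[ \p\Big(\prod_{l}|I_l|>\tfrac{\log x}{e^{j+1}}e^{g(x,j)}\Big)\le \Big(\tfrac{\log x}{e^{j+1}}\Big)^{-2}e^{-2g(x,j)}\,\E\prod_l|I_l(\tfrac12-\tfrac{k}{\log x}+it(l))|^2 . \]
Independence across the disjoint prime ranges lets the expectation factor over $l$. Each factor is handled by \eqref{Eprodsquare}, where the shift $it(l)$ is absorbed by the translation invariance of the Steinhaus law; by \eqref{merten1} the total is $\exp(\sum_{x^{e^{-(\lfloor\log\log x\rfloor-B)}}<p\le x^{e^{-(j+1)}}}\lambda^2(p)p^{-1+2k/\log x}+O(1))\ll \frac{\log x}{e^{j+1}}$. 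This step needs $k\le j$, or more generally it only needs the shift $2k/\log x$ to cost $O(1)$ over primes up to $x^{e^{-(j+1)}}$, which holds since $k\le j$ in the definition of $\mathcal{G}(k)$. For the lower deviation I would use the reciprocal moment in the second line of \eqref{Eprodsquare}, i.e. $\E\prod|1-\alpha_ph(p)p^{-s}|^2|1-\beta_ph(p)p^{-s}|^2$, which equals $\E\prod|I_l|^{-2}$ and gives the same bound. Each case therefore contributes $\ll (\tfrac{\log x}{e^{j+1}})^{-1}e^{-2g(x,j)}$.

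Multiplying by the number of grid tuples gives $\ll \log(\tfrac{\log x}{e^{j+1}})e^{-2g(x,j)}$. By \eqref{gdef} this is $e^{-2C\min\{\sqrt{\log\log x},1/(1-q)\}}\log(\tfrac{\log x}{e^{j+1}})^{-3}$. Summing over $k\le j\le\log\log x-B-2$, the quantity $\log(\log x/e^{j+1})$ runs essentially over the integers from $B$ up to $\log\log x$, so the sum of their inverse cubes converges. This yields the claimed bound $\ll\exp(-2C\min\{\sqrt{\log\log x},\frac1{1-q}\})$ uniformly in $k$ and $q$.

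The main obstacle is the counting of grid points. One must check from \eqref{tjdef} that the tuple $(t(l))_{l\ge j}$ is determined by $t(j)$, which is what lets the union bound cost only one factor $\frac{\log x}{e^{j+1}}\log\frac{\log x}{e^{j+1}}$ instead of a product over $l$. The extra $\log$ in that count is exactly what the $2\log\log$ term in $g$ is designed to absorb. The uniformity of the $O(1)$ error in \eqref{Eprodsquare} for the real part $\tfrac12-\tfrac{k}{\log x}$, which requires $\sigma\ge-1/\log y$ relative to the relevant primes, also needs care. Where that fails for the smallest blocks, I would bound those finitely many blocks trivially using \eqref{lambdabound}, or note that $B$ is large.
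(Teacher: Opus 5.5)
Your proposal is correct and follows essentially the same route as the paper, which defers to Harper's proof of Key Proposition 2 using \eqref{Eprodsquare}. That route is: a union bound over the $\ll \frac{\log x}{e^{j+1}}\log\frac{\log x}{e^{j+1}}$ values of $t(j)$, which determine all later $t(l)$; Chebyshev with the second moments of $\prod_l |I_l|^{\pm 2}$ from both lines of \eqref{Eprodsquare}; and the $2\log\log$ term in $g(x,j)$ making the sum over $j$ converge. Your hedge about the condition $\sigma \ge -1/\log y$ is unnecessary, since $k/\log x \le e^{l+1}/\log x$ for every block $l \ge j \ge k$, so \eqref{Eprodsquare} applies directly to all blocks.
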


  Now, by H\"{o}lder's inequality, for any Steinhaus random multiplicative function $h(n)$, 
\begin{align*}
%%\label{Erandfcnupperboundmain2}
\begin{split}
 \E\Big|\sum_{n \leq x} h(n)\lambda(n)\Big|^{2q} \ll & \Big (\E\Big|\sum_{n \leq x} h(n)\lambda(n)\Big|^{2} \Big)^q = \Big( \sum_{n \leq x} \lambda(n)^{2} \Big)^{q} \ll x^q.
\end{split}
\end{align*}  
   where the last estimation above follows from \eqref{lambdasquareasymp}. This implies that for any Steinhaus random multiplicative function $h(n)$, and $1 - 1/\sqrt{\log\log x}<q \leq 1$, we have
\begin{align}
\label{upperSteinhaus}
 \E \Big|\sum_{n \leq x} h(n)\lambda(n)\Big|^{2q} \ll \left( \frac{x}{1 + (1-q)\sqrt{\log\log x}} \right)^q .
\end{align}   
    
We use Proposition \ref{prop1} and Proposition \ref{prop2} by following the proof of the upper bound in \cite[Theorem 1]{Harper20} given in Section 4.1 of \cite{Harper20} upon using \eqref{EupperboundS}.  The bound in \eqref{upperSteinhaus} continues to hold for $2/3 \leq q \leq 1 - 1/\sqrt{\log\log x}$.

\subsection{The upper bounds, Rademacher}
\label{secmainupper2}

  Let $x$ be large, and $0 \leq k \leq \lfloor \log\log\log x \rfloor$ and $2/3 \leq q \leq 1$. We argue as in \cite[Section 4.4]{Harper20} upon using \eqref{EprodsquareR} and the estimation given in \eqref{EFsquare}, arriving at
\begin{align*}
%%\label{Ediff}
\begin{split}
\Big\| \int\limits_{|t| \leq 1/\sqrt{\log\log x}} |F_{k,f}(\tfrac12 - \tfrac{m}{\log x} + it)|^2 \dif t \Big\|_{q} \ll & \frac{\log x}{e^{k} \sqrt{\log\log x}},  \\
\frac{1}{|N|^{1/4}} \Big\| \int\limits_{N-1/2}^{N+1/2} |F_{k,f}(\tfrac{1}{2} - \tfrac{k}{\log x} + it)|^2 \dif t \Big\|_{q}  \ll & \frac{\log x}{e^{k} \sqrt{\log\log x}}, \quad |N| \geq (\log\log x)^2.
\end{split}
\end{align*}

Inserting the above estimations into \eqref{EupperboundR}, it suffices, in order to establish \eqref{upperSteinhaus} for any Rademacher random multiplicative function $h(n)$, to show that uniformly for $0 \leq k \leq \lfloor \log\log\log x \rfloor, 2/3 \leq q \leq 1$ and $|N| \leq (\log\log x)^2$, 
\begin{align}
\label{EtncondRademacher}
\begin{split}
\frac{1}{(|N|+1)^{q/4}} \E \Big( \int\limits_{\substack{|t - N| \leq 1/2, \\ |t| > 1/\sqrt{\log\log x} }} |F_{k,f}(\tfrac{1}{2} - \tfrac{m}{\log x} + it)|^2 \dif t \Big)^q \ll \Biggl( \frac{\log x}{e^{k}} \min \Big\{ 1, \frac{1}{(1-q)\sqrt{\log\log x} } \Big\} \Biggr)^q. 
\end{split}
\end{align}

  Now, to establish \eqref{EtncondRademacher}, we argue as in Section 4.4 of \cite{Harper20}.  Without loss of generality, we may assume that $N=0$ here. We recall the definition of $I_l(s)$ for the Rademacher case given in \eqref{Ildef}. We also recall the definition of the sequence of approximations $(t(j))_{0 \leq j \leq \log\log x - 2}$ in \eqref{tjdef} and the bound on $|t - t(j)|$ in \eqref{approxdist}. We define $D(t) := \lceil \log(1/|t|) \rceil + (B+1)$, where $B$ is as in Proposition \ref{prop6}. We denote $\mathcal{G}^{\text{Rad}}(k,t)$ the event such that for all $k \leq j \leq \log\log x - D - 1$, 
$$ \Biggl( \frac{\log x}{e^{j+1}} e^{g(x,j)} \Biggr)^{-1} \leq \prod_{l = j}^{\lfloor \log\log x \rfloor - D - 1} |I_{l}(\tfrac12 - \tfrac{k}{\log x} + it(l))| \leq \frac{\log x}{e^{j+1}} e^{g(x,j)} , $$
where $g(x,j)$ is defined in \eqref{gdef}. We then write $\mathcal{G}^{\text{Rad}}(k)$ for the event that $\mathcal{G}^{\text{Rad}}(k,t)$ holds for all $1/\sqrt{\log\log x} < |t| \leq 1/2$.  Arguing as in the proof of Key Propositions 3 in \cite{Harper20} by using Proposition \ref{prop6} and \eqref{EFsquare}, we arrive at the following result.
\begin{prop}
\label{prop3-1}
For all large $x$, uniformly for $0 \leq k \leq \lfloor \log\log\log x \rfloor$ and $2/3 \leq q \leq 1$, we have
$$ \E\Big( \textbf{1}_{\mathcal{G}^{\text{Rad}}(k)} \int\limits_{1/\sqrt{\log\log x} < |t| \leq 1/2} |F_{k,f}(\tfrac{1}{2} - \tfrac{k}{\log x} + it)|^2 \dif t \Big)^q \ll \Biggl( \frac{\log x}{e^{k}} C \min\Big\{1, \frac{1}{(1-q)\sqrt{\log\log x}} \Big\} \Biggr)^q , $$
where $\textbf{1}$ denotes the indicator function.
\end{prop}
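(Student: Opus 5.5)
We prove Proposition \ref{prop3-1} by following Harper's Key Proposition 3, adapted to the Rademacher Euler products $I_l$ from \eqref{Ildef}. First apply H\"older's inequality to move the expectation inside the $q$-th power:
\[ \E\Big(\textbf{1}_{\mathcal{G}^{\text{Rad}}(k)} \int |F_{k,f}|^2\Big)^q \leq \Big(\int_{1/\sqrt{\log\log x}<|t|\leq 1/2} \E\, \textbf{1}_{\mathcal{G}^{\text{Rad}}(k,t)} |F_{k,f}(\tfrac12-\tfrac{k}{\log x}+it)|^2 \dif t\Big)^q . \]
Here we replace $\mathcal{G}^{\text{Rad}}(k)$ by the larger event $\mathcal{G}^{\text{Rad}}(k,t)$ for each fixed $t$. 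It therefore suffices to show, uniformly in $1/\sqrt{\log\log x}<|t|\leq 1/2$, that
\[ \E\, \textbf{1}_{\mathcal{G}^{\text{Rad}}(k,t)} |F_{k,f}(\tfrac12-\tfrac{k}{\log x}+it)|^2 \ll \frac{\log x}{e^k}\, C\min\Big\{1,\frac{1}{(1-q)\sqrt{\log\log x}}\Big\}, \]
since the $t$-integral has length $\leq 1$.

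For fixed $t$, factor $|F_{k,f}|^2$ into three pieces. The first is the product over primes $p\leq x^{e^{-(\lfloor\log\log x\rfloor - D)}}$, i.e.\ $\log p \ll e^{D}\ll 1/|t|$. The second is $\prod_{l=k}^{\lfloor\log\log x\rfloor-D-1}|I_l|^2$. The third, for the range $x^{e^{-(k+1)}}<p$, is already absent. By independence of the $h(p)$, the contribution of the small primes factors off. Its second moment, by \eqref{EprodsquareR} and \eqref{merten1}, is $\ll e^{D}\ll 1/|t|$.

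For the middle range, pass to the tilted measure $\tilde{\p}^{\text{Rad}}_t$. We have
\[ \E\, \textbf{1}_{\mathcal{G}^{\text{Rad}}(k,t)}\prod_l |I_l(\tfrac12-\tfrac{k}{\log x}+it)|^2 = \E\prod_l|I_l|^2 \cdot \tilde{\p}^{\text{Rad}}_t(\mathcal{G}^{\text{Rad}}(k,t)), \]
and by \eqref{EprodsquareR} and \eqref{EFsquare} the factor $\E\prod_l|I_l|^2 \ll |t|\log x/e^{k}$. Combined with the $1/|t|$ from the small primes, this gives the main factor $\log x/e^k$.

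The event $\mathcal{G}^{\text{Rad}}(k,t)$ constrains the partial sums $\sum_{m}\log|I_{l_m}(\tfrac12-\tfrac{k}{\log x}+it(l_m))|$. After reindexing $j\mapsto$ the levels $l_j=\lfloor\log\log x\rfloor-D-j$, these sums are required to lie between $-a-Bj$ and $a+j+H(j)$. Here $a\asymp C\min\{\sqrt{\log\log x},1/(1-q)\}$ comes from $g(x,j)$, the term $2\log\log$ gives $H(j)$ with $|H(j)|\leq 10\log j$, and $n\asymp\log\log x$. The approximations $t(l)$ satisfy the hypothesis $|t_j-t|\leq 1/(j^{2/3}e^{D+j})$ of Proposition \ref{prop6} by \eqref{approxdist}, and $D(t)$ meets its lower bound since $\log\log(1+|t|)$ is bounded. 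One technical point is that the $I_l$ are evaluated at $t(l)$ while $F_{k,f}$ is at $t$. As in Harper, we handle this by noting that $|I_l(s+it)|$ and $|I_l(s+it(l))|$ differ by a bounded factor on average. Alternatively, we redefine the tilt at the points $t(l)$ and absorb an $O(1)$ loss via the $\pm 2$ slack built into Lemma \ref{lem5}. Proposition \ref{prop6} then yields
\[ \tilde{\p}^{\text{Rad}}_t(\mathcal{G}^{\text{Rad}}(k,t))\ll \min\{1,a/\sqrt{n}\}\ll C\min\Big\{1,\frac{1}{(1-q)\sqrt{\log\log x}}\Big\}. \]

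The main obstacle is this bookkeeping between $t$ and $t(l)$, together with verifying the $\sigma$ hypothesis of Proposition \ref{prop6}. For $k\geq 1$ we have $\sigma=-k/\log x$, which is much larger than $1/e^{D+n}$ unless we truncate $n$. Following Harper, we would take $n=\lfloor\log\log x\rfloor-D-k$ (so the range ends near $x^{e^{-(k+1)}}$), so that $e^{D+n}\approx\log x/e^{k}$ and $|\sigma|\leq k/\log x$ is admissible after shifting. Since $k\leq\log\log\log x$, the loss in $\sqrt{n}$ is negligible. Combining the three factors and raising to the power $q$ gives the proposition.
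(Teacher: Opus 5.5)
Your overall route is the paper's: Harper's Key Proposition 3, with H\"older, the tilt $\tilde{\p}^{\text{Rad}}_t$, \eqref{EFsquare} for $\E|F_{k,f}|^2$, and Proposition \ref{prop6} for the tilted probability. However, you misread the barrier, and the uniform-in-$t$ bound you reduce to is false. With $l_j=\lfloor\log\log x\rfloor-D-j$ one has $\log\frac{\log x}{e^{l_j+1}}=D+j-1+\theta$ for some $\theta\in[0,1)$, so the upper constraint in $\mathcal{G}^{\text{Rad}}(k,t)$ reads
\[ \sum_{m\le j}\log\big|I_{l_m}(\tfrac12-\tfrac{k}{\log x}+it(l_m))\big| \le j+D+C\min\Big\{\sqrt{\log\log x},\frac{1}{1-q}\Big\}+2\log(D+j)+O(1). \]
Hence the $a$ in Proposition \ref{prop6} is $C\min\{\sqrt{\log\log x},1/(1-q)\}+D(t)+O(\log D(t))$, not $\asymp C\min\{\sqrt{\log\log x},1/(1-q)\}$. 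The barrier $\frac{\log x}{e^{j+1}}$ is normalised from the origin, while the walk only starts at level $\approx D(t)$. This is not an artefact of the method. Under $\tilde{\p}^{\text{Rad}}_t$ each $\log|I_{l_m}|$ has mean $1+o(1)$ (see the proof of Lemma \ref{lem5}), so the barrier really sits about $D(t)+C\min\{\cdot\}$ above the drift. The tilted probability is therefore of order $\min\{1,(C\min\{\cdot\}+D(t))/\sqrt{\log\log x}\}$. Now $D(t)=\lceil\log(1/|t|)\rceil+B+1$ reaches $\tfrac12\log\log\log x+O(1)$ for $|t|$ near $1/\sqrt{\log\log x}$. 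For e.g.\ $q=2/3$, the quantity $\E\,\textbf{1}_{\mathcal{G}^{\text{Rad}}(k,t)}|F_{k,f}|^2$ is then about $\frac{\log x}{e^k}\cdot\frac{\log\log\log x}{\sqrt{\log\log x}}$, which is not $\ll\frac{\log x}{e^k}\cdot\frac{3C}{\sqrt{\log\log x}}$.

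The fix is to avoid taking a supremum in $t$. Keep the pointwise bound
\[ \E\,\textbf{1}_{\mathcal{G}^{\text{Rad}}(k,t)}\big|F_{k,f}(\tfrac12-\tfrac{k}{\log x}+it)\big|^2\ll\frac{\log x}{e^k}\min\Big\{1,\frac{C\min\{\sqrt{\log\log x},\frac{1}{1-q}\}+\log(1/|t|)+O(1)}{\sqrt{\log\log x}}\Big\} \]
and integrate it over $1/\sqrt{\log\log x}<|t|\le 1/2$ after the H\"older step. Since $\int_{|t|\le 1/2}\log(1/|t|)\,\dif t\ll 1$, the extra term contributes $\ll\frac{\log x}{e^k\sqrt{\log\log x}}$, which is admissible because $1-q\le 1$.

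Two smaller points. First, the ``technical point'' about $t$ versus $t(l)$ is no issue: Proposition \ref{prop6} already has the tilt at $t$ and the factors at $t_j$, and \eqref{approxdist} gives $|t(l_j)-t|\le 1/(j^{2/3}e^{D+j})$ for $B$ large. Your ``bounded factor on average'' alternative would not be valid for an indicator anyway. Second, the $\sigma$ hypothesis is automatic. The event forces $l_n=k$, i.e.\ $n=\lfloor\log\log x\rfloor-D-k$, so $e^{D+n}\le\log x/e^k$ and hence $k/\log x\le e^{-(D+n)}$.
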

 
  We note moreover that analogues to \eqref{sumlambdacos}, we have
$$\sum_{x^{e^{-(l+2)}} < p \leq x^{e^{-(l+1)}}} \frac{2\lambda^2(p)\cos(2 t(l) \log p)}{p^{1 - 2k/\log x}} \ll \frac{1}{|t| e^{-l} \log x}.$$ 
  Using the above and proceeding as in the proof of Key Propositions 4 in \cite{Harper20} upon using \eqref{EprodsquareR} and the estimation given in \eqref{EFsquare}, we obtain the following result.
\begin{prop}
\label{prop4}
For all large $x$, uniformly for $0 \leq k \lfloor \log\log\log x \rfloor$ and $2/3 \leq q \leq 1$, we have
$$ \p(\mathcal{G}^{\text{Rad}}(k) \; \text{fails}) \ll \exp \Big( -2C \min\Big\{\sqrt{\log\log x}, \frac{1}{1-q} \Big\} \Big) . $$
\end{prop}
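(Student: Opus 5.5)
We will follow the proof of Key Proposition 4 in \cite{Harper20}, replacing the $\pm1$ coefficients by $\lambda(p)$ and using \eqref{EprodsquareR} in place of Harper's Euler product computation. The plan has three steps.

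\textbf{Step 1: union bound over a finite net.} The event $\mathcal{G}^{\text{Rad}}(k)$ fails exactly when, for some $t$ with $1/\sqrt{\log\log x} < |t| \leq 1/2$ and some $k \leq j \leq \log\log x - D(t) - 1$, the partial product $\prod_{l=j}^{\lfloor\log\log x\rfloor - D - 1} |I_l(\tfrac12 - \tfrac{k}{\log x} + it(l))|$ exceeds $\frac{\log x}{e^{j+1}}e^{g(x,j)}$ or falls below its reciprocal. The product only involves the discretised points $t(l)$ with $l \geq j$. By \eqref{tjdef}, for fixed $j$ there are at most $\ll \frac{\log x}{e^{j}}\log\frac{\log x}{e^j}$ distinct tuples $(t(l))_{l \geq j}$ as $t$ ranges over $|t|\le 1/2$. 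We therefore take a union bound over $j$ and over these tuples.

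\textbf{Step 2: Chebyshev bounds for each fixed tuple.} For the upper tail we use Chebyshev with the second moment:
\[ \p\Big(\prod_{l \geq j} |I_l| > \tfrac{\log x}{e^{j+1}}e^{g(x,j)}\Big) \leq \Big(\tfrac{\log x}{e^{j+1}}e^{g(x,j)}\Big)^{-2}\, \E\prod_{l\ge j}|I_l|^2 . \]
The $h(p)$ for distinct $p$ are independent, so the expectation factors over the blocks $l$. Each block is evaluated by the first formula in \eqref{EprodsquareR} with $t_1 = t(l)$, giving $\exp\big(\sum_{p} \lambda^2(p)/p^{1-2k/\log x} + O(1)\big)$, where $p$ runs over $x^{e^{-(\lfloor\log\log x\rfloor - D)}} < p \le x^{e^{-(j+1)}}$. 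As in \eqref{EFsquare}, \eqref{merten1} bounds this by $\ll \frac{\log x}{e^{j}}$. Hence the upper-tail probability is $\ll \frac{e^{j}}{\log x}e^{-2g(x,j)}$.

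For the lower tail we apply Chebyshev to $\prod_{l\ge j}|I_l|^{-2}$ and use the second formula in \eqref{EprodsquareR}. This produces an extra term $2\sum \lambda^2(p)\cos(2t(l)\log p)/p^{1-2k/\log x}$ in the exponent. By the displayed estimate preceding the proposition, this term is $\ll \frac{1}{|t|e^{-l}\log x}$. Summed over $l$ it is a geometric series dominated by its largest term, at $l = \lfloor\log\log x\rfloor - D - 1$. There $e^{-l}\log x \asymp e^{D} \asymp e^{B}/|t|$, since $D = \lceil\log(1/|t|)\rceil + B + 1$. The total is therefore $O(e^{-B}) = O(1)$. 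This is precisely why $D(t)$ is tied to $\log(1/|t|)$. So the lower tail obeys the same bound as the upper tail.

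\textbf{Step 3: summing.} Summing over the $\ll \frac{\log x}{e^{j}}\log\frac{\log x}{e^{j}}$ tuples removes the factor $\frac{e^j}{\log x}$. Writing $L_j = \log\frac{\log x}{e^{j+1}}$, the factor $e^{-2g(x,j)}$ contributes $\exp(-2C\min\{\sqrt{\log\log x},\frac{1}{1-q}\})\, L_j^{-4}$ by \eqref{gdef}. After multiplying by the number of tuples, each $j$ contributes $\exp(-2C\min\{\cdot\})\, L_j^{-3}$. Summing over $j$ and over the finitely many $D$-values gives the claimed bound.

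The main obstacle is the lower-tail estimate together with the dependence of $D$ on $t$. The range of $j$ and the blocks involved change with $|t|$, so the union bound should be organised dyadically in $|t|$: fix $D$, and let $t$ range over $e^{-(D-B-1)} < |t| \le e^{-(D-B-2)}$. Within each such range the cosine sum must stay $O(1)$. Near $|t| \approx 1/\sqrt{\log\log x}$ the largest block gives only borderline $O(1)$ control, so we must check that this is uniform. The number of $D$-values is $O(\log\log\log x)$, and it has to be absorbed, for instance by the slack $L_j^{-3}$ or by comparing it with the factor $2$ in $2C$.
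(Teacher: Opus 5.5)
Your proposal is correct and follows the same route as the paper, which simply runs Harper's proof of Key Proposition~4: a union bound over the $\ll \frac{\log x}{e^{j}}\log\frac{\log x}{e^{j}}$ values of $t(j)$, Chebyshev's inequality with both formulas in \eqref{EprodsquareR}, and the displayed bound on the $\cos(2t(l)\log p)$ sums to control the lower tail. Your closing worries are harmless: the cosine contribution is uniformly $O(e^{-B})$ because $e^{-l}\log x\ge e^{D}\ge e^{B+1}/|t|$ on every block, so nothing is borderline near $|t|\approx 1/\sqrt{\log\log x}$; and the union over $D$ is absorbed by the $L_j^{-3}$ slack, since admissible $j$ satisfy $L_j\ge D$ and hence $\sum_j L_j^{-3}\ll D^{-2}$ (your other suggestion, using the factor $2$ in $2C$, would fail when $1/(1-q)$ is bounded).
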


  We now apply Propositions \ref{prop3-1} and \ref{prop4} and use the same arguments as the proof for the  upper bound in the Rademacher case Section 4.4 of \cite{Harper20} to see that the estimation given in \eqref{upperSteinhaus} is also valid for any Rademacher random multiplicative function $h(n)$ and all $2/3 \leq q \leq 1$.

\subsection{The lower bounds, Steinhaus}
\label{secmainlower1}

  Recall that we write $F_f$ for the function $F_{-1,f}$ defined in Section \ref{secupperbounds}. Let $B$ be the large fixed natural number from Proposition \ref{prop5} which we assume without loss of generality is the same as that from Proposition \ref{prop7}. We write $L(t) = L_{x,q,V}(t)$ for each $t \in \mr$ the event that for all $\lfloor \log V \rfloor + 3 \leq j \leq \log\log x - B - 2$,
\begin{eqnarray}
\Biggl(\frac{\log x}{e^{j+1}} \Biggr)^{-B} e^{-\min\{\sqrt{\log\log x}, 1/(1-q)\}} & \leq & \prod_{l = j}^{\lfloor \log\log x \rfloor - B - 2} |I_{l}(\tfrac12 + \tfrac{4V}{\log x} + it)| \nonumber \\
& \leq & \frac{\log x}{e^{j+1}} e^{\min\{\sqrt{\log\log x}, 1/(1-q)\} - 2\log\log(\frac{\log x}{e^{j+1}})}. \nonumber
\end{eqnarray}
We further use $\mathcal{L}$ to denote the random subset of points $|t| \leq 1/2$ at which $L(t)$ occurs. \newline

 We first note the following result which is an analogue to Key Proposition 5 in \cite{Harper20}.
\begin{prop}
\label{keyprop5}
With the notation as above, we have uniformly for all large $x$, $2/3 \leq q \leq 1$ and $1 \leq V \leq (\log x)^{1/100}$, 
$$ \E \Biggl(\int\limits_{\mathcal{L}} |F_f(1/2 + \frac{4V}{\log x} + it)|^2 \dif t \Biggr)^{2} \ll e^{2\min\{\sqrt{\log\log x}, 1/(1-q)\}} \Biggl(\frac{\log x}{V(1 + (1-q)\sqrt{\log\log x})} \Biggr)^2 . $$
\end{prop}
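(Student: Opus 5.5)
We expand the square and exchange expectation and integration, as in the proof of Key Proposition 5 in \cite{Harper20}:
\[
\E \Bigl(\int_{\mathcal{L}} |F_f(\tfrac12 + \tfrac{4V}{\log x} + it)|^2 \dif t \Bigr)^{2} = \int\limits_{-1/2}^{1/2}\int\limits_{-1/2}^{1/2} \E\Bigl( \textbf{1}_{L(t_1)}\textbf{1}_{L(t_2)} |F_f(\tfrac12+\tfrac{4V}{\log x}+it_1)|^2|F_f(\tfrac12+\tfrac{4V}{\log x}+it_2)|^2\Bigr)\dif t_1 \dif t_2 .
\]
For a Steinhaus function, $(h(p)p^{-it_1})_p$ has the same law as $(h(p))_p$, so the integrand depends only on $t:=t_2-t_1$. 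We may therefore set $t_1=0$ and integrate over $|t|\leq 1$. We drop the indicator $\textbf{1}_{L(t)}$ and keep only $\textbf{1}_{L(0)}$ together with the corresponding events at $t$. We then factor
\[
|F_f(s)|^2|F_f(s+it)|^2
\]
as a product over the blocks $I_l$ of \eqref{Ildef}, and over the remaining primes $p\leq x^{e^{-(B+2)}\cdot\ldots}$, i.e.\ the small primes below $x^{e^{-(\log\log x - B - 2)}}$ and the large primes above $x^{1/e}$ and up to $x$. The small and large ranges are independent of the events $L(\cdot)$, which only involve the blocks $I_l$. Their contribution is computed directly from \eqref{sttwoprods}.

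We split according to the size of $|t|$. For $|t|$ large, say $|t| > 1/\log x$ at scale, let $D$ be the integer with $D\approx \log(1/|t|)+B+1$ as in Proposition \ref{prop7}. Primes with $p\leq e^{1/|t|}$, i.e.\ the blocks $l$ with $x^{e^{-(l+1)}}\leq e^{C/|t|}$, are treated trivially. Using \eqref{sttwoprods} with $\cos(t\log p)\approx 1$, each such block has size about $e^{4\cdot(\text{block weight})}$, so these primes contribute about $(1/|t|)^4$ times the normalising constant. For the remaining blocks we change measure to $\tilde{\p}_{t}^{\text{St},(2)}$. The expectation becomes
\[
\E\prod|\cdot|^{-2}|\cdot|^{-2}\times \tilde{\p}_{t}^{\text{St},(2)}(\text{barrier events}).
\]
By \eqref{sttwoprods} and \eqref{sumlambdacos}, the expectation factor is
\[
\exp\Bigl(\sum \tfrac{2\lambda^2(p)}{p^{1+8V/\log x}}+O(1)\Bigr)\asymp (|t|\log x/V)^2 .
\]
Applying Proposition \ref{prop7} with $a\asymp \min\{\sqrt{\log\log x},1/(1-q)\}$ and $n\approx \log\log x-D-\log V$ bounds the probability factor by
\[
\min\{1,a/\sqrt n\}^2\ll \frac{e^{2a}}{(1+(1-q)\sqrt{\log\log x})^2}.
\]
The factor $e^{2a}$ absorbs the shift of the barrier caused by the exponential weight $e^{\min\{\cdot\}}$ in the definition of $L(t)$, and the upper barrier from the partial products is used. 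For the large primes $x^{1/e}<p\leq x$, \eqref{sttwoprods} gives only $O(1)$. Putting these together, the integrand for $|t|\geq 1/\log x$ is
\[
\ll \frac{e^{2a}}{(1+(1-q)\sqrt{\log\log x})^2}\cdot\frac{(\log x)^2}{V^2}\cdot\frac{1}{|t|^{2}}\cdot\frac{1}{\log x}.
\]
The precise powers follow as in \cite{Harper20}: integrating over $|t|\geq V/\log x$ gives the $(\log x/V)^2$ size, while the region $|t|\leq V/\log x$ is handled by the one-dimensional Proposition \ref{prop5}. In that region $|F_f(s)|^2|F_f(s+it)|^2$ is comparable to $|F_f(s)|^4$ on most blocks. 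Using $\textbf{1}_{L(0)}$ and the upper barrier, we bound $|F_f|^2$ on the low blocks by $(\log x/e^{j})e^{a}$ deterministically, which leaves one factor of $|F_f|^2$ to be estimated with the measure $\tilde{\p}$ and Proposition \ref{prop5}. Integrating over a $t$-interval of length $V/\log x$ then yields $e^{2a}(\log x/V)^2(1+(1-q)\sqrt{\log\log x})^{-2}$.

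The main obstacle is the intermediate range $V/\log x\leq |t|\leq 1$, where we must match the decay in $|t|$ against the growth of $(1/|t|)^{4}$ from the correlated small primes. There are two competing effects. The barrier event lets us replace the correlated part by the upper barrier $\prod|I_l|\leq (\log x/e^{j})e^{a}$ at the level $j\approx\log(1/|t|)+\ldots$. This reduces $(1/|t|)^4$ to $(1/|t|)^2e^{2a}$ times the square of the entropic factor, and that is integrable against $dt$ with total mass $\ll \log x/V$ per factor. We will try first to carry out exactly the splitting at $j=D(t)$ used in Key Proposition 5 of \cite{Harper20}, applying \eqref{sumlambdasquareoverp1} to control the block variances and Lemma \ref{lem7} for the decorrelation. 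We expect the only new input to be checking that \eqref{sumlambdacos} gives the required cancellation of $\cos(t\log p)$ weighted by $\lambda^2(p)$, which \eqref{merten4} supplies.
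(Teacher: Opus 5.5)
Your outline (expand the square, reduce to $t_1=0$ by translation invariance, split the primes at about $e^{1/|t|}$ into a correlated and a decorrelated range, apply Proposition \ref{prop7} to the decorrelated range using the $\lambda^2$-weighted cosine bound) is the skeleton of Harper's Key Proposition 5, which the paper follows. However, the decisive estimate for $V/\log x\le |t|\le 1$ fails as you wrote it.

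Write $s=\tfrac12+\tfrac{4V}{\log x}$, $a=\min\{\sqrt{\log\log x},1/(1-q)\}$ and $P_q=1+(1-q)\sqrt{\log\log x}$. If the correlated primes $p\le e^{1/|t|}$ are treated trivially, they contribute $\asymp |t|^{-4}$. Multiplying by the decorrelated factor $(|t|\log x/V)^2$ and your probability bound $e^{2a}P_q^{-2}$ gives the integrand $e^{2a}P_q^{-2}(\log x/V)^2|t|^{-2}$. Its integral over $|t|\ge V/\log x$ is $e^{2a}P_q^{-2}(\log x/V)^3$, too large by a factor $\log x/V$. The factor $1/\log x$ in your displayed integrand has no source.

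Your last paragraph senses that the upper barrier of $L(0)$ is needed, but the accounting is off. On $L(0)$, at level $j(t)\approx\log(|t|\log x)$, one bounds only one of the two correlated products deterministically: $\prod_{l\ge j(t)}|I_l(s)|^2\ll e^{2a}|t|^{-2}(\log(2/|t|))^{-4}$. For the other one takes the expectation, $\E\prod_{l\ge j(t)}|I_l(s+it)|^2\ll 1/|t|$; this is the bound $\ll 1/|t|$ displayed in the paper's proof. So $|t|^{-4}$ becomes $e^{2a}|t|^{-3}(\log(2/|t|))^{-4}$, not $e^{2a}|t|^{-2}$. The integrand is then $\ll e^{2a}(\log x/V)^2|t|^{-1}(\log(2/|t|))^{-4}$ times a probability factor. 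The $t$-integral converges only because of the factor $(\log(2/|t|))^{-4}$. It comes from the term $-2\log\log(\frac{\log x}{e^{j+1}})$ in the definition of $L(t)$, which you never use; without it you lose a factor $\log(\log x/V)\asymp\log\log x$.

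Second, Proposition \ref{prop7} does not apply directly to $L(0)\cap L(t)$. The products in $L(\cdot)$ run down from the top block $\lfloor\log\log x\rfloor-B-2$, so they contain the correlated blocks. The partial sums in Proposition \ref{prop7} start at $l_1=\lfloor\log\log x\rfloor-D-1$. To reduce to an event on the decorrelated blocks alone, you must use the lower barrier $(\frac{\log x}{e^{j+1}})^{-B}e^{-a}$, together with the upper one, at the transition level $j=\lfloor\log\log x\rfloor-D$. This must be done for both $0$ and $t$, so you cannot drop $\textbf{1}_{L(t)}$; otherwise you get only one factor $P_q^{-1}$. The reduction replaces $a$ by $2a+(B+1)D$ with $D\asymp\log(2/|t|)$. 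The probability is therefore $\ll\min\{1,(a+\log(2/|t|))/\sqrt{n}\}^2$, and the extra $\log^2(2/|t|)$ must again be absorbed by $(\log(2/|t|))^{-4}$.

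Saying ``the factor $e^{2a}$ absorbs the shift'' is not a valid mechanism: that $e^{2a}$ is already spent on the correlated barrier. Moreover, your inequality $\min\{1,a/\sqrt n\}^2\ll e^{2a}P_q^{-2}$ is false when $n$ is small and $a$ is bounded (e.g.\ $q=2/3$).

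Proposition \ref{prop7} also needs $D\ge 2\log(1/|t|)+B+1$, not $\log(1/|t|)+B+1$. So for $|t|\le (V/\log x)^{1/2}$ no decorrelated blocks remain. In that range, as in $|t|\le V/\log x$, the required saving is $(\log(\log x/V))^{-4}\ll(\log\log x)^{-4}$ coming from the barrier, not Proposition \ref{prop5}. A single application of Proposition \ref{prop5} would give only $P_q^{-1}$, not the claimed $P_q^{-2}$.
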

\begin{proof}
  The proof proceeds along the same line as that of Key Proposition 5 in  \cite[p. 70]{Harper20} with modifications to take account into the effects of the extra factors $\lambda(n)$.  More precisely, we need to replace the last display on \cite[p. 70]{Harper20} by 
$$ \exp\Big (\sum_{p \leq x^{e^{-(\lfloor \log\log x \rfloor - B)}}} \frac{\lambda^2(p)(2 + 2\cos(t\log p))}{p^{1+8V/\log x}} + \sum_{x^{e^{-(\max\{\lfloor \log V \rfloor + 3, \lfloor \log(|t|\log x) \rfloor\} + 1)}} < p \leq x} \frac{\lambda^2(p)(2 + 2\cos(t\log p))}{p^{1+8V/\log x}} + O(1) \Big ). $$
  Applying \eqref{merten1}, the above is
$$ \ll \exp\Big (\sum_{\min\{x^{1/V}, e^{1/|t|}\} < p \leq x} \frac{\lambda^2(p)(2 + 2\cos(t\log p))}{p^{1+8V/\log x}}\Big ) \ll \exp\Big (\sum_{\min\{x^{1/V}, e^{1/|t|}\} < p \leq x^{1/V}} \frac{\lambda^2(p)(2 + 2\cos(t\log p))}{p}\Big ). $$
 We then argue as in the proof of Lemma \ref{lem5}.  The overall contribution from those terms with the $\cos(t\log p)$ sum in the above expression is $\ll 1$, so that the above expression is 
 \[ \ll \exp\Big (\sum_{\min\{x^{1/V}, e^{1/|t|}\} < p \leq x^{1/V}} \frac{2\lambda^2(p)}{p}\Big ) \ll \Big(\max\Big\{1, \frac{|t| \log x}{V} \Big\} \Big)^2 \]
  by \eqref{merten1} again. Similarly, we have
\begin{align*}
%%\label{Eest}
\begin{split}
 \E \frac{|F_f(\tfrac12 + \tfrac{4V}{\log x})|^2}{\prod_{l = \lfloor \log(|t|\log x) \rfloor}^{\lfloor \log\log x \rfloor - B - 2} |I_{l}(\tfrac12 + \tfrac{4V}{\log x})|^2} \frac{|F_f( \tfrac12 + \tfrac{4V}{\log x} + it)|^2}{\prod_{l = \lfloor \log(|t|\log x) \rfloor}^{\lfloor \log\log x \rfloor - B - 2} |I_{l}(\tfrac12 + \tfrac{4V}{\log x} + it)|^2} \ll \Big(\frac{|t| \log x}{V} \Big)^2. 
\end{split}
\end{align*}

  Moreover, \eqref{merten1} and \eqref{sttwoprods} yield
\begin{align*}
%%\label{Eest1}
\begin{split}
\E \prod_{\substack{l = \max\{\lfloor \log V \rfloor + 3, \\ \lfloor \log(|t|\log x) \rfloor\}}}^{\lfloor \log\log x \rfloor - B - 2} |I_{l}(\tfrac12 + \tfrac{4V}{\log x})|^2=& \exp\Big (\sum_{p \leq \min\{x^{1/V}, e^{1/|t|}\}} \frac{1}{p^{1 + 8V/\log x}} + O(1) \Big ) \ll \min\Big\{\frac{\log x}{V}, \frac{1}{|t|} \Big\}, \quad \mbox{and} \\
\E \prod_{l = \lfloor \log(|t|\log x) \rfloor}^{\lfloor \log\log x \rfloor - B - 2} |I_{l}(\tfrac12 + \tfrac{4V}{\log x})|^2 =& \exp\Big (\sum_{p \leq e^{1/|t|}} \frac{\lambda^2(p)}{p^{1 + 8V/\log x}} + O(1) \Big ) \ll \frac 1{|t|}.
\end{split}
\end{align*}
  
 We deploy the above estimations into the corresponding places in the proof of Key Proposition 5 in  \cite[p. 70]{Harper20}.  The proposition is follows. 
\end{proof}

  Now, we infer from the proof of the lower bound in Theorem 1 in \cite{Harper20} that
\begin{align}
\label{Elowerbound1}
\begin{split} 
 \E \Bigl( \int\limits_{\mathcal{L}} |F_f( \tfrac12 + \tfrac{4V}{\log x} + it)|^2 \dif t \Bigr)^{q} \geq \frac{\Bigl( \E \int\limits_{\mathcal{L}} |F_f( \tfrac12 + \tfrac{4V}{\log x} + it)|^2 \dif t \Bigr)^{2-q}}{\Bigl( \E \Bigl(\int\limits_{\mathcal{L}} |F_f(\tfrac12 + \tfrac{4V}{\log x} + it)|^2 \dif t \Bigr)^{2} \Bigr)^{1-q}} . 
\end{split}
\end{align}

Now translation invariance implies that 
$$ \Bigl( \int\limits_{-1/2}^{1/2} \E \textbf{1}_{L(t)} |F_f( \tfrac12 + \tfrac{4V}{\log x} + it)|^2 \dif t \Bigr)^{2-q} = \Bigl( \E \textbf{1}_{L(0)} |F_f(\tfrac12 + \tfrac{4V}{\log x})|^2 \Bigr)^{2-q} = \Bigl( \tilde{\p}(L(0))\E |F_f( \tfrac12 + \tfrac{4V}{\log x})|^2 \Bigr)^{2-q}. $$
  Note that by \eqref{merten1}, \eqref{Eprodsquare} and arguing as in \eqref{EFsquare}, we have that 
\begin{align}
\label{EtncondRademacher1}
\begin{split}
  \E |F_f(\tfrac12 + \tfrac{4V}{\log x})|^2=&\exp\Big (\sum_{p \leq x} \frac{\lambda^2(p)}{p^{1 + 8V/\log x}} +O(1)\Big ) = \exp\Big (\sum_{p \leq x^{1/V}} \frac{\lambda^2(p)}{p^{1 + 8V/\log x}} + O(1) \Big ) \gg \frac {\log x}{V}, \quad \mbox{and} \\
  \E |F_{f} \Big( \tfrac12 + \tfrac{2V}{\log x} + it)|^2 =& \exp\Big (\sum_{p \leq x} \frac{\lambda^2(p)}{p^{1 + 4V/\log x}} +O(1)\Big ) = \exp\Big (\sum_{p \leq x^{1/V}} \frac{\lambda^2(p)}{p^{1 + 8V/\log x}} + O(1) \Big ) \ll \frac {\log x}{V}. 
\end{split}
\end{align}
Now apply Proposition \ref{prop5} with $n = \lfloor \log\log x \rfloor - (B+1) - (\lfloor \log V \rfloor + 3), t_j \equiv 0, a = \min\{\sqrt{\log\log x}, 1/(1-q)\} + O(1)$  and $g(n) = -2\log n$ there to estimate $\tilde{\p}(L(0))$. Together with \eqref{EtncondRademacher1}, this gives
\begin{align*}
%%\label{EtncondRademacher1}
\begin{split}
   \Bigl( \int\limits_{-1/2}^{1/2} \E \textbf{1}_{L(t)} |F_f(\tfrac12 + \tfrac{4V}{\log x} + it)|^2 \dif t \Bigr)^{2-q} \gg & \Bigl( \frac{1}{1 + (1-q)\sqrt{\log\log x}} \E |F_f( \tfrac12 + \tfrac{4V}{\log x})|^2 \Bigr)^{2-q} \\
\gg & \Bigl( \frac{\log x}{V(1 + (1-q)\sqrt{\log\log x})} \Bigr)^{2-q} . 
\end{split}
\end{align*}
 
  On the other hand, we apply Proposition \ref{keyprop5} to bound from above the denominator on the right-hand side expression of \eqref{Elowerbound1} to deduce from it that
\begin{align}
\label{Erandfcnlowerboundallq}
  \Big\| \int\limits_{-1/2}^{1/2} |F_f( \tfrac12 + \tfrac{4V}{\log x} + it)|^2 \dif t \Big\|_{q}^{1/2} \gg \sqrt{\frac{\log x}{V(1 + (1-q)\sqrt{\log\log x})}}. 
\end{align}

  Next, we note that as a consequence of Proposition \ref{prop1} and Proposition \ref{prop2}, we deduce by following the proof of the upper bounds in \cite[Theorem 1]{Harper20} given in Section 4.1 of \cite{Harper20} to see that for $2/3 \leq q \leq 1 - 1/\sqrt{\log\log x}$,
\begin{align}
\label{Erandfcnupperbound}
  \E\Big( \int\limits_{-1/2}^{1/2} |F_{f}(\tfrac12 + \tfrac{2V}{\log x} + it)|^2 \dif t \Big)^q \ll \Big( \frac {\log x}{V (1-q) \sqrt{\log\log x}} \Big)^q.
\end{align}
  Moreover, for $1 - 1/\sqrt{\log\log x}<q \leq 1$, we have by H\"{o}lder's inequality and \eqref{EtncondRademacher1} that
\begin{align*}
%%\label{Erandfcnupperboundmain2}
\begin{split}
  \E \Big(\int\limits_{-1/2}^{1/2} |F_{f}(\tfrac12 + \tfrac{2V}{\log x} + it)|^2 \dif t \Big)^q \ll & \Big (\int\limits_{-1/2}^{1/2} \E|F_{f}( \tfrac12 + \tfrac{2V}{\log x} + it)|^2 \dif t \Big)^q \ll \Big( \frac {\log x}{V} \Big)^q.
\end{split}
\end{align*}
  The above and \eqref{Erandfcnupperbound}  implies that, for $2/3 \leq q \leq 1$,
\begin{align}
\label{Erandfcnupperboundallq}
  \Big\| \int\limits_{-1/2}^{1/2} |F_f( \tfrac12 + \tfrac{2V}{\log x} + it)|^2 \dif t \Big\|_{q}^{1/2} \ll \sqrt{\frac{\log x}{V(1 + (1-q)\sqrt{\log\log x})}}. 
\end{align}

  We now substitute \eqref{Erandfcnlowerboundallq} and \eqref{Erandfcnupperboundallq} into \eqref{hlambdalower1}, and choose $V$ large enough to get that for any Steinhaus random multiplicative function $h(n)$, and for any $2/3 \leq q \leq 1$,
\begin{align}
\label{lowerSteinhaus}
 \E\Big| \sum_{n \leq x} h(n)\lambda(n) \Big|^{2q} \gg \left( \frac{x}{1 + (1-q)\sqrt{\log\log x}} \right)^q .
\end{align}   

\subsection{The lower bounds, Rademacher}
\label{secmainlower2}

   We now want to establish \eqref{lowerSteinhaus} for any Rademacher random multiplicative function $h(n)$ as well. As the proof is similar to that for the  Steinhaus case, we only indicate the necessary modifications here. Here we need to have an analogue of Proposition \ref{keyprop5}, which requires a version of Proposition \ref{prop5}. This ultimately relies on altering  \eqref{Eest2} for the Rademacher case, which requires one to compute
$$ \E \prod_{x < p \leq y} \left|1 + \frac{\lambda(p)h(p)}{p^{1/2+\sigma+it_1}}\right|^{2+iu} \left|1 + \frac{\lambda(p)h(p)}{p^{1/2+\sigma+i(t_1 + t_2)}}\right|^{2+iv} , $$
for real $t_1$, $t_2$, $u$, $v$.  One then proceeds as in the proof of Lemma \ref{eulerproduct} to obtain an expression, which is (up to an negligible error term) the exponential of
\begin{align}
\label{EtncondRademacher2}
\begin{split}
  &\sum_{x < p \leq y} \lambda^2(p)\Biggl( \frac{(1+iu/2)^2 + (1+iv/2)^2}{p^{1+2\sigma}} + \frac{(2+iu)(2+iv)\cos(t_1 \log p) \cos((t_1 + t_2)\log p)}{p^{1+2\sigma}} \\
& \hspace*{3cm} + \frac{((2+iu)^{2}/2 - (2+iu))\cos(2t_1 \log p) + ((2+iv)^{2}/2 - (2+iv))\cos(2(t_1 + t_2)\log p)}{2p^{1+2\sigma}} \Biggr) \\
=& \sum_{x < p \leq y} \lambda^2(p)\Biggl( \frac{(1+iu/2)^2 + (1+iv/2)^2}{p^{1+2\sigma}} + \frac{(2+iu)(2+iv)\cos(t_2 \log p)}{2p^{1+2\sigma}}  \\
& + \frac{(2+iu)(iu/2)\cos(2t_1 \log p) + (2+iv)(iv/2)\cos(2(t_1 + t_2)\log p)}{2p^{1+2\sigma}}  + \frac{(2+iu)(2+iv)\cos((2t_1 + t_2)\log p)}{2p^{1+2\sigma}} \Biggr) .  
\end{split}
\end{align}
 Observe that the first two fractions in the last expression above are exactly analogous to the Steinhaus case in \eqref{Eest2}, while the other two are new. \newline

  Recall the definition of $L(t) = L_{x,q,V}(t)$ for each real $t$ from Section \ref{secmainlower1}. We modify the definition $\mathcal{L}$ in Section \ref{secmainlower1} for the Rademacher case to be the random subset of points $t \in [1/3,1/2]$ at which $L(t)$ occurs. This ensures that when $t_1 , t_1 + t_2 \in [1/3,1/2]$, then we have $2/3 \leq 2t_1 , 2(t_1 + t_2), 2t_1 + t_2 \leq 1$. We then argue as in the proof of Lemma \ref{lem5}  to see that the contributions 
$$\sum_{x < p \leq y} \frac{\lambda^2(p)\cos(2t_1 \log p)}{p^{1+2\sigma}} , \quad \sum_{x < p \leq y} \frac{\lambda^2(p)\cos(2(t_1 + t_2) \log p)}{p^{1+2\sigma}} , \quad \mbox{and} \quad \sum_{x < p \leq y} \frac{\lambda^2(p)\cos((2t_1 + t_2) \log p)}{p^{1+2\sigma}}$$
 from the last expression of \eqref{EtncondRademacher2} are all negligible. One then proceeds similar to the proof for the Steinhaus case given in Section \eqref{secmainlower1} to see that \eqref{lowerSteinhaus} holds for any Rademacher random multiplicative function $h(n)$ and for any $2/3 \leq q \leq 1$.

\subsection{Completion of the proof}.
  We now gather our results from Sections \ref{secmainupper1}, \ref{secmainupper2}, \ref{secmainlower1} and \ref{secmainlower2}. The bounds in \eqref{upperSteinhaus} and \eqref{lowerSteinhaus} are valid for both the Steinhaus case and the Rademacher case for all $2/3 \leq q \leq 1$. This establishes \eqref{mainestimation} and hence concludes the proof of Theorem \ref{lowerboundsfixedmodmean}.

\vspace*{.5cm}

\noindent{\bf Acknowledgments.} P. G. is supported in part by NSFC grant 12471003 and L. Z. by the FRG Grant PS71536 at the University of New South Wales.

\bibliography{biblio}
\bibliographystyle{amsxport}

\end{document}